\def\|{\mathbb}
\def\@{\mathcal}
\def\!{\mathscr}
\def\~{\widetilde}
\def\-{\overline}
\def\Aff{\operatorname{Aff}}
\def\Spec{\operatorname{Spec}}
\def\Sets{\operatorname{\mathbf{Sets}}}
\def\gr{\operatorname{gr}}
\def\ord{\operatorname{ord}}
\def\ac{\operatorname{\overline{ac}}}
\def\Ldp{\mathcal{L}_{\mathrm{LD,P}}}
\def\Lval{\mathbf{L}_{\mathrm{Val}}}
\def\Lres{\mathbf{L}_{\mathrm{Res}}}
\def\Lord{\mathbf{L}_{\mathrm{Ord}}}
\def\Val{\mathrm{Val}}
\def\Res{\mathrm{Res}}
\def\Ord{\mathrm{Ord}}
\def\Def{\mathrm{Def}}
\def\RDef{\mathrm{RDef}}
\def\Field{\mathrm{Field}}
\newtheorem{theorem}{Theorem}[section]
\newtheorem{claim}[theorem]{Claim}
\newtheorem{corollary}[theorem]{Corollary}
\newtheorem{definition}[theorem]{Definition}
\newtheorem{lemma}[theorem]{Lemma}
\newtheorem{proposition}[theorem]{Proposition}
\newenvironment{convention}[1][Convention]{\refstepcounter{theorem} \vspace{6pt}\noindent \textbf{#1 \arabic{section}.\arabic{theorem}. }}
{

\vspace{6pt}}
\newenvironment{remark}[1][Remark]{\refstepcounter{theorem} \vspace{6pt}\noindent \textbf{#1 \arabic{section}.\arabic{theorem}.}}{

\vspace{6pt}}
\title{$p$-adic and motivic measure on Artin $n$-stacks}
\author{Chetan Balwe}
\date{}
\begin{document}

\maketitle

\section{Introduction}

Let $X$ be a scheme of finite type and pure dimension $d$ over $\|Z_p$. One can define a measure on the space $X(\|Z_p)$, called the $p$-adic measure which we denote by $\mu_d$. Roughly speaking, this is defined by choosing bi-analytic isometries of open subsets of the smooth part of $X(\|Z_p)$ with balls in $\|Z_p^d$ and pulling back the normalized Haar measure on $\|Z_p^d$ (\cite{Se2}). However, there is another way to define this measure. For each $n$, let $\tau_n: X(\|Z_p) \rightarrow X(\|Z/p^n \|Z)$ be the ``reduction modulo $p^n$" map. Let $A$ be a sub-analytic or definable (in the language of valued fields) subset of $X(\|Z_p)$. Then it can be proved (see \cite{Os}) that 
\[
\mu_d(A) = \lim_{n \rightarrow \infty} p^{-nd} \tau_n(A) \text{.}
\]
In other words, the $p$-adic measure on $X(\|Z_p)$ can be obtained from the counting measure on $X(\|Z/p^n\|Z)$ by a limiting process. As an application of $p$-adic measure, it can be proved that the power series
\begin{eqnarray*}
P_X(T):= \sum_{n=0}^{\infty} |\tau_n(X(\|Z_p))| T^n  \text{,} & \~P_X(T):= \sum_{n=0}^{\infty} |X(\|Z/p^n \|Z)| T^n
\end{eqnarray*}
are rational functions of $T$ (see \cite{De}). 

If $X$ is a scheme of finite type and pure dimension $d$ over $\|Z$, one can consider the schemes $X_p := X \times_{\Spec(\|Z)} \Spec(\|Z_p)$ for various primes $p$. Then motivic integration allows us to compare the $p$-adic measures on $X_p$ as $p$ varies. Roughly speaking, if we consider a formula $\phi$ in the language of valued fields and interpret it on the various $X_p$, we obtain a family of subsets $A_p \subset X(\|Z_p)$ for almost all $p$. The motivic measure $\mu(\phi)$ of the formula $\phi$ lies in a certain localization of the Grothendieck ring of formulas in the language of rings with coefficients in $\|Z$. Then for almost all $p$, the measure $\mu_d(A_p)$ can be obtained from $\mu(\phi)$ by a process of specialization which amounts to ``counting the $\|F_p$-valued points satisfying $\mu(\phi)$" (see \cite{CL2}, Section 9 for a precise discussion). As a result, for almost all $p$, evaluating the $p$-adic measure of a definable subset for any fixed $p$ boils down to counting the $\|F_p$-valued points satisfying a set of formulas which is independent of $p$. Thus, one is able to strengthen the above-mentioned result regarding the rationality of the power series $P_{X_p}$ and $\~P_{X_p}$. Indeed, one is able to obtain rational functions $P_X(T)$ and $\~P_X(T)$ in $T$ with coefficients in the above-mentioned localization of the Grothendieck ring which specialize to the power series $P_{X_p}(T)$ and $\~P_{X_p}(T)$ respectively, for almost all $p$. 

We would like to generalize these results to Artin stacks which are strongly of finite type over $\|Z$. First we would like to define $p$-adic measure on an Artin stack $X$ which is strongly of finite type over $\|Z_p$. We do this by first defining a counting measure for the $(\|Z/p^n \|Z)$-valued points of $X$. Then we use this counting measure and obtain $p$-adic measure by a limiting process as we indicated above in the case of varieties. As a consequence of the proof, we will see that the power series $P_X(T)$ and $\~P_{X}(T)$ are rational functions of $T$. Finally, when $X$ is an Artin stack which is strongly of finite type over $\|Z$, we will explore the notion of motivic measure for Artin stacks and obtain a uniform rationality result for the power series $P_{X_p}$. 

\paragraph{Acknowledgments:}
I would like to thank François Loeser for his encouragement and for giving me numerous helpful suggestions on this topic. I was supported by Fondation Sciences Mathématiques de Paris through a post-doctoral fellowship and I would like to thank them for their generous support during the completion of this work.

\begin{convention}
We will use the following conventions and notations: 
\begin{itemize}
\item[(1)] For any scheme $T$, $\Aff/T$ will denote the (big) étale site of affine schemes over $T$. 
\item[(2)] For any scheme $T$, $(\Aff/T)^{\sim}$ will denote the model category of simplicial presheaves on $T$ with the \textit{local projective model structure} (see \cite{TV1}). The homotopy category $Ho((\Aff/T)^{\sim})$ will be referred to as the category of stacks over $T$ and denoted by $St(T)$.  
\item[(3)] We will usually be concerned with Artin stacks that are strongly of finite type over the base scheme. For the sake of brevity, we will say that $X$ is an sft-Artin stack over $S$ if $X$ is an Artin stack, strongly of finite type over $S$ (see \cite{TV2} for the definitions). 
\item[(4)] When we speak of sft-Artin stacks over a discrete valuation ring $A$, we will always intend it to be \textit{flat} over $A$.
\end{itemize}
\end{convention}

\section{Counting points on Artin stacks}
\label{Greenberg}

In this section, we consider the problem of meaningfully defining a notion of counting the$(\|Z/p^n\|Z)$-valued points of a sft-Artin stack over $\|Z_p$. In other words, for such a stack $X$, we wish to define a counting measure on the set $\pi_0(X(\|Z/p^n \|Z))$. 

If $X$ is an sft-Artin stack over a finite field $\|F_q$ there is already a notion of counting the $\|F_q$-valued points on $X$ (see \cite{To}, Prop. 3.5). Indeed, one defines 
\begin{equation}
\label{toen-counting}
\#X(\|F_q):= \sum_{x \in \pi_0(X(\|F_q))} \prod_{i>0} |\pi_i(X(\|F_q),x)|^{{(-1)}^i}
\end{equation}
where $|\cdot|$ denotes the cardinality of a set. (Both the sum and the product in the above expression are finite.) This definition is justified by the fact that this counting function factors through the Grothendieck ring of sft-Artin stacks over $\|F_q$.

The above formula suggests that if one has an sft-Artin stack $X$ over $\|Z_p$, one may wish to simply define
\begin{equation}
\label{R_n-counting}
\#X(\|Z/p^n \|Z) := \sum_{x \in \pi_0(X(\|Z/p^n\|Z))} \prod_{i>0} |\pi_i(X(\|Z/p^n\|Z),x)|^{{(-1)}^i} 
\end{equation}
(assuming that the expression on the right-hand side is finite). Indeed, this is what we will do, but this is not merely an ad hoc definition. One can functorially construct an Artin stack $Gr_n(X)$ of strongly finite type over $\|F_p$ such that we have a weak equivalence $X(\|Z/p^n\|Z) \simeq Gr_n(X)(\|F_p)$ and so that applying the formula (\ref{toen-counting}) to $Gr_n(X)$ yields the formula (\ref{R_n-counting}). 

More generally, let $R$ be a complete discrete valuation ring with residue field $k$. Let $\omega$ be a uniformizing parameter of $R$ and let $R_n:= R/<\omega^{n+1}>$ for $n \geq 0$. Then, given a sft-Artin stack $X$ over $R$, we will construct sft-Artin stacks $Gr_n(X)$ over $k$ such that $X(R) \simeq Gr_n(X)(k)$.

We begin by recalling material from \cite{Gr}. Suppose $S$ is an Artin local ring with residue field $K$ such that there exists a bijection of the elements of $S$ with the set $K^n$ for some $n$ and such that the addition and multiplication maps are given by polynomials with coefficients in $K$. Then there exists a ring variety $\@S$ over $K$, whose underlying scheme is $\|A^n_K$, and such that $\@S(K)=S$. We can use the ring variety $\@S$ to define a functor $\Aff/\Spec(K) \rightarrow \Aff/\Spec(S)$ given by $U \mapsto \Spec(\@S(A))$.

\begin{convention}
For any ring scheme $\@A$ over a base scheme $T$, we denote by $\~{\@A}: \Aff/T \rightarrow \Aff/\Spec(\@A(T))$ the functor $U \mapsto \Spec(\@A(U))$. 
\end{convention} 

It is proved in \cite{Gr}, that if $X$ is a scheme of finite type over $S$, then the presheaf on $\Aff/\Spec(K)$ defined given by $U \mapsto X(\~{\@S}(U))$ is represented by a scheme of finite type over $K$. In the case when $S$ is of the form $S=R_n$ for some $n \geq 0$ and we wish to extend this result to sft-Artin stacks over $R_n$.  

\begin{convention}
Let $C$ and $D$ be Grothendieck sites and $\sigma: C \rightarrow D$ be a functor. Then we use $(\sigma_!, \sigma_{*})$ to denote the adjunction $C^{\sim} \rightleftarrows D^{\sim}$ where $\sigma_*(F)$ is defined by $\sigma_*(F)(c) = F(\sigma(c))$. (Of course, this is not always a Quillen adjunction.)
\end{convention}

Now for each $n \geq 0$, let $\@R_n$ denote the ring variety over $k$ which is constructed from $R_n$ in the manner described above. We wish to examine the functors $(\~{\@R_n})_*$ for each $n \geq 0$. We will prove that the adjunction $((\~{\@R_n})_{!}, (\~{\@R_n})_*)$ is a Quillen adjunction and that the right derived functor of $(\~{\@R_n})_*$ (which will be the desired functor $Gr_n$) takes sft-Artin stacks over $R_n$ to sft-Artin stacks over $k$. 

The functor $\~{\@R_n}$ behaves differently depending on whether characteristics of $R$ and $k$. We will only need the case in which $char(R) \neq char(k)$. The equal characteristic case will is only mentioned in this section for the sake of completeness. Also, this case is much easier to handle since, in this case we know that $R$ is isomorphic to the power series ring $k[[t]]$. It is then easily verified that for $U = \Spec(A)$ in $\Aff/\Spec(k)$, we have $\~{\@R_n}(U) = U \times_{\Spec(k)} \Spec(R_n)$. Thus $(\~{\@R}_n)_*$ is simply Weil restriction with respect to the morphism $\Spec(R_n) \rightarrow \Spec(k)$ and we can define $Gr_n(X)$ to be the stack $\underline{Hom}(\Spec(R_n), X)$ where $\underline{Hom}$ denotes the internal Hom in $St(k)$ (\cite{TV1}, Section 3.6). However, we do not have this option when $char(R) \neq char(k)$ and thus, for the sake of a unified presentation, we simply follow the argument described in the preceding paragraph for both cases and prove that $((\~{\@R_n})_{!}, (\~{\@R_n})_*)$ is a Quillen adjunction. For the equal characteristic case, this is easy to see (and this was presented explicitly in \cite{Ba}). However, in the unequal characteristic case, a little more work is required as we will see in Prop. \ref{preserving-etale} and Prop. \ref{preserving-fibre-products}. 

We will now focus on the structure of the ring varieties $\@R_n$ when $char(R) = 0$, $char(k)=p \neq 0$. In this case, $R$ is obtained as a totally ramified extension of the ring $W(k)$ of Witt vectors with coefficients in $k$. Let us recall some basic facts about Witt vectors (see \cite{Ill} or \cite{Se}). For any $\|F_p$-algebra $A$, $W(A)$ is actually the set of $A$-rational points of the ring scheme of Witt vectors, denoted by $W$. The underlying scheme of $W$ is $\|A^{\|N} = \Spec \|F_p[Z_1, Z_2, \ldots]$. (Strictly speaking, this is only the fibre of the Witt scheme at the prime $p$. However, since we are only going to be working with $\|F_p$-algebras, this will suffice for our purposes.) The addition and multiplication are given by polynomials with coefficients in $\|F_p$. As it turns out, these polynomials, when restricted to the first $n$ coordinates, define a ring scheme structure on $\|A^n$ which is denoted by $W_n$ and called the scheme of Witt vectors of length $n$. For $n>m$, the projection on the first $m$ coordinates defines a ``truncation" morphism $W_n \rightarrow W_m$ which is a ring scheme homomorphism. Similarly, we have morphisms $W \rightarrow W_n$ and $W$ is the projective limit of the system defined by the $W_n$ for $n \geq 0$ along with the truncation morphisms. 

The scheme $W$ has two automorphisms - the Verschiebung or ``shifting´´ operator $V$ and the Frobenius operator $F$. Via the isomorphism $W \cong \|A^{\|N}$, these automorphisms are given on $W(A)$ for any $k$-algebra $A$ by the formulas
\[
V((a_0, a_1, \ldots)) = (0, a_0, a_1, \ldots)
\] 
and
\[
F((a_0, a_1, \ldots)) = (a_0^p, a_1^p, \ldots) 
\]
where $a_i \in A$, $\forall i$. In other words, $F$ is just induced by the Frobenius operator on $A$ (which we will denote by the same symbol $F$). (Note that this description of the Frobenius operator only applies when we are working with algebras over $\|F_p$. For more general rings, the description is via the ``ghost components" of the Witt vectors.)  

We will require the following easily verifiable facts about these operators:
\begin{itemize}
\item[(1)] For any $a,b \in W(A)$, we have $aV(b) = V(F(a)b)$. 
\item[(2)] Iterations of $V$ induce a filtration of $W$ that is consistent with the ring structure. In other words $V^{n}W(A) \cdot V^{m}W(A) \subset V^{n+m}W(A)$. Note that $V^n W(A)$ is the kernel of the truncation map $W(A) \rightarrow W_n(A)$. 
\end{itemize}
Note that we have similar operators $V$ and $F$ on $W_n(A)$ for any $n$ and that these maps commute with the truncation map. (It is also an easily verifiable fact that $VF$ and $FV$ are equal to multiplication by $p$.)   

Let $\gr_V W(A)$ denote the graded ring of $W(A)$ with respect to the filtration induced by $V$. Then it follows from statement (1) that $\gr^n_V W(A)$ is isomorphic to the $A$-module $F^n_{*}A$ obtained by considering $A$ as an $A$-module by scalar restriction via the homomorphism $F^n: A \rightarrow A$. 

Now, let us consider the rings $R$ and $R_n$ for $n \geq 0$. We know that $R$ is obtained from $W(k)$, by attaching the root of an Eisenstein polynomial of degree $e$ and coefficients in $W(k)$ where $e$ is the absolute ramification index of $R$ over $W(R_0)$ (i.e. $ord_R(p) = e$). Thus $R$ is a free module with $e$ generators over the ring $W(R_0)$.  From this, it is easy to see that there exists a ring scheme $\@R$ which is a module scheme over $W$ of the form $W^e$ and such that $R = \@R(k)$. The rings $R_n$ are Artin local rings and so by the above discussion, we see that there exists ring schemes $\@R_n$ of finite type over $k$, such that such that $\@R_n(k) = R_n$. (Remark: The last equality is generalized in \cite{BLR} as $\@R_n(A) =  R_n \otimes_{W(k)} W(A)$ where $A$ is \textit{any} algebra over $k$. However, in reality, this will not hold unless $A$ is perfect.) We also have ring homomorphisms homomorphisms $\@R_n \rightarrow \@R_m$ for $n>m$ obtained from the surjections $R_n \rightarrow R_m$ and $\@R$ is the limit of this projective system of schemes.

We wish to prove that the functor $\@R_n$ preserves étale morphisms of schemes. This has been proved for the case $R = W(k)$ in \cite{Ill}. The proof in the general case can be obtained by the same method with some modifications. The proof is presented here in full detail for the sake of completeness.

For any integer $m \geq 1$, let $\@R^m$ (resp. $\@R^m_n$) denote the kernel of the morphism $\@R \rightarrow \@R_{m-1}$ (resp. $\@R_n \rightarrow \@R_{m-1}$). Let $\@R^0$ (resp. $\@R_n^0$) be the scheme $\@R$ (resp. $\@R_n$). Then $\{ \@R^m \}_{m \geq 0}$ (resp. $\{ \@R^m_n \}_{m \geq 0}$ ) is a decreasing filtration of closed subschemes on $\@R$ (resp. $\@R_n$). 

The ideals $\@R^m$ and $\@R^m_n$ can be easily described if we choose a good presentation of $\@R$ and $\@R_n$ as $W$-modules. We know that $R = W(R_0)[\omega]$ (see \cite{Se}, Chapter I, Prop 18). Thus we may choose $1, \omega, \ldots, \omega^{e-1}$ as generators for $R$ and $R_n$ as $W(k)$-modules. A typical element of $R$ is of the form 
\[
x = \sum_{i=0}^{e-1} (a_{0i}, a_{1i}, \ldots) \cdot \omega^{i}
\]
where $(a_{0i}, a_{1i}, \ldots) \in W(k)$. Clearly, 
\[
ord_R((a_{0i}, a_{1i}, \ldots) \cdot \omega^{i}) = i + ke
\]
where $k$ is the least integer such that $a_{ki} \neq 0$. Thus 
\[
ord_R(x) = \min_i\{ord_R(a_{0i}, a_{1i}, \ldots) \cdot \omega^{i})\} \text{.}
\] 
Suppose that $m+1 = q \cdot e + r$ for $0 \leq r \leq e-1$. Then it is easy to see from the above calculation of $ord_R(x)$ that  $\@R^m = V^{m_0}W \oplus \ldots V^{m_{(e-1)}}W$ where $m_i = q + \epsilon_i$ where $\epsilon_i = 1$ if $0 \leq i < r$ and $=0$ otherwise. (The description of $\@R^m_n$ is similar.) From this description, it is easy to see that the filtration is consistent with the ring structure and that the ideals $\@R^1_n$ are nilpotent for all $n$. 

Let $\gr \@R(A)$ (resp. $\gr \@R_n(A)$) denote the graded ring of $\@R(A)$ (resp. $\@R_n(A)$) with respect to this filtration. It follows that for $m<n$,  
\[
\gr^m \@R(A) = \gr^m \@R_n(A)= 
\begin{cases}
F^{q}_* A & \text{if $r \neq 0$} \\
F^{q+1}_* A  & \text{if $r =0$}
\end{cases}
\]
where $q$ and $r$ are as in the preceding paragraph. 

Now we prove that the functor $\~{\@R_n}$ takes étale maps into étale maps. In the special case $\@R=W$, this has been proved in \cite{Ill}. (An extension of this result involving general Witt vectors is proved in \cite{Bo}). Our argument for general $\@R$ is an adaptation of the proof in \cite{Ill}, but we present it in detail for the sake of completeness. 

\begin{proposition}
\label{preserving-etale}
Let $X \rightarrow Y$ be an étale morphism of schemes over $k$. Then $\~{\@R_n}(X) \rightarrow \~{\@R_n}(Y)$ is étale and the diagram
\[
\xymatrix{
X \ar[r] \ar[d] & \~{\@R_n}(X) \ar[d] \\
Y \ar[r] & \~{\@R_n}(Y)
}
\]
is cartesian.
\end{proposition}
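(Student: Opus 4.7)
The plan is to work Zariski-locally on $Y$, reducing to the affine case: set $Y = \Spec B$ and $X = \Spec C$ with $B \to C$ an étale map of $k$-algebras, and prove that $\@R_n(B) \to \@R_n(C)$ is étale together with $C \cong B \otimes_{\@R_n(B)} \@R_n(C)$. Locally of finite presentation follows routinely from the explicit polynomial description of $\@R_n$ as a scheme, so the real content is formal étaleness together with the cartesian identification.

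I would proceed by induction on $m \geq 1$, proving at each level the pair of statements (a) $\@R_{m-1}(B) \to \@R_{m-1}(C)$ is étale, and (b) $B \otimes_{\@R_{m-1}(B)} \@R_{m-1}(C) \cong C$. The base case $m=1$ is immediate since $\@R_0 = k$. The structural input driving the inductive step is that the kernel of $\@R_m \twoheadrightarrow \@R_{m-1}$, namely $\@R_m^m$, is square-zero: $\@R_m^m \cdot \@R_m^m \subset \@R_m^{2m} \subset \@R_m^{m+1} = 0$ for $m \geq 1$. The graded description recalled in the preceding paragraphs then identifies $\@R_m^m(A) \cong F^q_* A$ as an $A$-module, for an appropriate integer $q$.

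The heart of the argument is to show $\@R_m^m(B) \cdot \@R_m(C) = \@R_m^m(C)$ inside $\@R_m(C)$. The action of $\@R_m(C)$ on $\@R_m^m(C)$ factors through $C$ because $\@R_m^m \cdot \@R_m^1 \subset \@R_m^{m+1} = 0$, so under the identifications $\@R_m^m(B) \cong F^q_* B$ and $\@R_m^m(C) \cong F^q_* C$, the claim reduces to checking that the image of $F^q_* B$ generates $F^q_* C$ as a $C$-module, equivalently $C = C^{p^q} \cdot B$. This is the standard consequence in characteristic $p$ of $B \to C$ being étale, namely that the relative Frobenius is an isomorphism. Granting this, $\@R_m(C) \otimes_{\@R_m(B)} \@R_{m-1}(B) = \@R_{m-1}(C)$, so $\@R_m(C)$ is a square-zero thickening of $\@R_{m-1}(C)$ over $\@R_m(B) \twoheadrightarrow \@R_{m-1}(B)$, with thickening ideal $\@R_m^m(C)$ matching the canonical base change $\@R_m^m(B) \otimes_{\@R_{m-1}(B)} \@R_{m-1}(C)$ (using (b) at level $m-1$). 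The classical deformation theory of étale morphisms ensures that the étale $\@R_{m-1}(B)$-algebra $\@R_{m-1}(C)$ admits a unique étale lift to an $\@R_m(B)$-algebra with this thickening ideal; hence $\@R_m(C)$ must be this unique lift, which establishes (a) at level $m$. Statement (b) at level $m$ follows by the same tensor computation combined with the inductive hypothesis.

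The main obstacle is the Frobenius–étale compatibility $F^q_* B \otimes_B C \cong F^q_* C$ and its propagation to the identity $\@R_m^m(B) \cdot \@R_m(C) = \@R_m^m(C)$; this is precisely what makes the mixed-characteristic case subtler than the equal-characteristic one, where the base change $\@R_n(B) = B \otimes_k R_n$ lets one argue directly. Everything else amounts to structural bookkeeping with the filtration $\@R_n^m$ and invoking the classical theory of unique étale lifts along square-zero thickenings.
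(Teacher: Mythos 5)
Your proof is correct in essence but takes a genuinely different route from the paper's. The paper establishes flatness of $\@R_n(A)\to\@R_n(B)$ in one stroke via a filtered analogue of the Bourbaki flatness criterion (Lemma~\ref{flatness-criterion}) applied to the full filtration $\{\@R^m_n\}_{m\ge 0}$, then deduces étaleness from Lemma~\ref{flat-pullback-etale}. You instead climb the filtration one square-zero layer at a time, inducting on $m$ and invoking deformation theory over each thickening $\@R_m(B)\twoheadrightarrow\@R_{m-1}(B)$. Both arguments turn on exactly the same key input: for an étale map $B\to C$ of $k$-algebras in characteristic $p$, the relative Frobenius is an isomorphism, so $F^q_*B\otimes_B C\cong F^q_*C$, and this transfers to the graded pieces $\@R^m_m$. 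Your version is essentially Lemma~\ref{flatness-criterion} unpacked into its inductive skeleton; the paper's compresses the induction into one invocation of the filtered criterion. Neither is obviously preferable, and your decomposition arguably makes the role of the Frobenius input more transparent.

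One step at the end of your inductive argument needs to be tightened. Having shown $\@R^m_m(B)\cdot\@R_m(C)=\@R^m_m(C)$ and that $\@R^m_m(C)$ is abstractly isomorphic to $\@R^m_m(B)\otimes_{\@R_{m-1}(B)}\@R_{m-1}(C)$, you conclude that $\@R_m(C)$ ``must be'' the unique étale lift ``with this thickening ideal.'' As phrased this is not a valid inference: the uniqueness statement for étale lifts is not indexed by a choice of thickening ideal, and two square-zero $\@R_m(B)$-algebra extensions of $\@R_{m-1}(C)$ with abstractly isomorphic kernels need not be isomorphic as algebras. What your computation actually shows is something stronger and sufficient: the canonical multiplication map $\@R^m_m(B)\otimes_{\@R_{m-1}(B)}\@R_{m-1}(C)\to\@R^m_m(C)$ coincides, under the graded identifications, with the relative-Frobenius base-change map $F^q_*B\otimes_BC\to F^q_*C$, hence is an isomorphism. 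That is precisely the hypothesis of the local criterion of flatness over the square-zero ideal $\@R^m_m(B)$, so $\@R_m(C)$ is flat over $\@R_m(B)$; combined with étaleness of the reduction (your inductive hypothesis) and finite presentation lifting along nilpotents, this gives étaleness directly, and statement (b) at level $m$ follows by the tensor computation you indicate. With that one step made explicit, the argument is complete.
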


\begin{proof} 
In the case $char(R) = char(k)$, the statement is obvious. Thus we now focus on the case $char(R) = 0$, $char(k) = p \neq 0$. Without any loss of generality, we may assume that $X = \Spec(B)$, $Y = \Spec(A)$ and the morphism $X \rightarrow Y$ is given by a $k$-algebra homomorphism $A \rightarrow B$. First we show that $\@R_n(A) \rightarrow \@R_n(B)$ is flat. For this we use a modification of the flatness criterion for filtered modules in \cite{Bour}, Chap. III, \S 5, Thm. 1. This criterion is stated there for $I$-adic filtrations but the arguments are easily adapted to this case. For the sake of completeness, the result is stated in the following lemma:

\begin{lemma}
\label{flatness-criterion}
Let $A$ be a ring with a given decreasing filtration $\{A_i\}_{i=0}^{\infty}$. Let $M$ be an $A$-module with a filtration $\{M_i\}_{i=0}^{\infty}$ which is compatible with the filtration of $A$. Suppose the following conditions hold:
\begin{itemize}
\item[(1)] There exists an integer $k$ such that $A_i = 0$ and $M_i = 0$ for all $i > k$. 
\item[(2)] $M/M_1$ is a flat $A/A_1$-module.
\item[(3)] $\gr^n A \otimes_{\gr^0 A} \gr^0 M \rightarrow \gr^n M$ is an isomorphism.  
\end{itemize}
Then $M$ is a flat $A$-module.
\end{lemma}
\begin{proof}
First we prove that $A_n \otimes_A M \rightarrow M_n$ is a surjection for all $n$. Indeed, this is trivially true for $n >k$. For general $n$,
we consider the diagram
\[
\xymatrix{
& A_{n+1}M \ar[r] \ar[d] & A_nM \ar[r] \ar[d] & A_nM/A_{n+1}M \ar[r] \ar[d]& 0 \\
0 \ar[r] & M_{n+1} \ar[r] & M_n \ar[r] & \gr^n M \ar[r] & 0 \text{.}
}
\]
From the diagram it is clear that it will suffice to prove that the right vertical map is surjective (since then we can apply decreasing induction on $n$). The image of $A_nM/A_{n+1}M$ in $\gr^n M$ is the same as the image of 
\[
A_n/A_{n+1} \otimes_{A} M \rightarrow \gr^n M \text{.}
\] 
But $A_n/A_{n+1}$ is annihilated by $A_1$. So 
\[
A_n/A_{n+1} \otimes_{A} M \cong A_n/A_{n+1} \otimes_{A/A_1} M/(A_1M) \text{.}
\] 
Since $A_1M \subset M_1$, we have a surjection 
\[
A_n/A_{n+1} \otimes_{A/A_1} M/(A_1M) \rightarrow A_n/A_{n+1} \otimes_{A/A_1} M/M_1 \text{.}
\] 
But $A_n/A_{n+1} \otimes_{A/A_1} M/M_1 \rightarrow \gr^n M$ is an isomorphism by hypothesis. Thus we see $A_n \otimes_A M \rightarrow M_n$ is a surjection. In other words, $A_nM = M_n$ for all $n$.  

Now we claim that $A_n \otimes_A M \rightarrow M_n$ is an injection as well. Indeed, consider the diagram
\[
\xymatrix{
& A_{n+1} \otimes_A M \ar[r] \ar[d] & A_n \otimes_A M \ar[r] \ar[d] & (A_n/A_{n+1}) \otimes_A M \ar[r] \ar[d]& 0 \\
0 \ar[r] & M_{n+1} \ar[r] & M_n \ar[r] & \gr^n M \ar[r] & 0 \text{.}
}
\]
Again, using decreasing induction on $n$, we see that it is enough to show that the right vertical map is an isomorphism. For this, we observe that 
\[
(A_n/A_{n+1}) \otimes_A M \cong (A_n/A_{n+1}) \otimes_{A/A_1} M/A_1 M \cong (A_n/A_{n+1}) \otimes_{A/A_1} M/M_1 \text{.}
\]
Thus the right vertical map is an isomorphism by hypothesis. Thus, $A_n \otimes_A M \rightarrow M_n$ is an isomorphism for all $n$. 

Now we may forget about the given filtrations on $A$ and $M$ and apply \cite{Bour}, Chap. III, \S 5, Thm. 1 for the $A_1$-adic filtrations. In other words, the facts that $M/M_1 = M/A_1M$ is a flat $A/A_1$ module and $A_1 \otimes M \rightarrow A_1M$ is a bijection imply that $M$ is a flat $A$-module.
\end{proof}

We continue the proof of Prop. \ref{preserving-etale}:
By [SGA-5], XIV, \S1, Prop. 2, the relative Frobenius map $F_{\Spec(B)/\Spec(A)}$ is surjective and radicial. Since it is also étale, it is an isomorphism. Thus $F_{*}B \cong B \otimes_A F_{*} A$, and by iteration $F^q_{*}B \cong B \otimes_A F^q_{*} A$ for any positive integer $q$. Thus, by our earlier arguments $\gr^n \@R_n(B) \cong B \otimes_A \gr^n \@R_n(A)$. By lemma \ref{flatness-criterion}, we see that $\@R_n(B)$ is flat over $\@R_n(A)$. Also, from the proof of the previous lemma, we see that $\@R^1_n(B)$ is generated by the image of $\@R^1_n(A)$. Thus
\[
\xymatrix{
X \ar[r] \ar[d] & \~{\@R_n}(X) \ar[d] \\
Y \ar[r] & \~{\@R_n}(Y) 
}
\]
is cartesian and the vertical arrows are flat morphisms. Now the result follows from lemma \ref{flat-pullback-etale}
\end{proof}

\begin{lemma}
\label{flat-pullback-etale}
Suppose $A \rightarrow B$ is a flat ring homomorphism. Let $I$ be a nilpotent ideal in $A$ and suppose that $A/I \rightarrow B/IB$ is étale. Then $A \rightarrow B$ is étale. 
\end{lemma}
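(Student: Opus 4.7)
The plan is to verify the three defining properties of an étale morphism: flatness, locally of finite presentation, and unramifiedness. Flatness is already given by hypothesis, so the work is to establish the other two using that $I$, and hence $IB$, is nilpotent; in particular $IB$ lies in every prime of $B$ and so in the Jacobson radical of $B$.

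First I would check that $B$ is of finite presentation over $A$. Choose $b_1,\ldots,b_n \in B$ whose images generate $B/IB$ as an $A/I$-algebra (possible since $B/IB$ is étale, hence of finite type, over $A/I$) and form $\phi: A[x_1,\ldots,x_n] \to B$, $x_i \mapsto b_i$. The cokernel $M$ of $\phi$ satisfies $M = IB \cdot M$; iterating and using that $IB$ is nilpotent gives $M=0$, so $\phi$ is surjective. Let $K = \ker\phi$. Flatness of $B$ over $A$ gives $\mathrm{Tor}_1^A(A/I,B)=0$, so the sequence
\[
0 \to K/IK \to (A/I)[x_1,\ldots,x_n] \to B/IB \to 0
\]
is exact. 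Since $B/IB$ is étale over $A/I$, the kernel $K/IK$ of this presentation is finitely generated. Lifting generators to $K$ and applying the nilpotent-ideal Nakayama argument to $K$ modulo the submodule they generate yields that $K$ itself is finitely generated, so $A \to B$ is of finite presentation.

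Next I would verify that $A \to B$ is unramified, i.e.\ $\Omega_{B/A}=0$. Base change of differentials gives
\[
\Omega_{B/A} \otimes_B B/IB \;\cong\; \Omega_{(B/IB)/(A/I)},
\]
and the right-hand side vanishes because $B/IB$ is étale over $A/I$. Thus $\Omega_{B/A} = IB \cdot \Omega_{B/A}$. Since $B$ is now known to be of finite presentation over $A$, the module $\Omega_{B/A}$ is finitely generated over $B$, and as $IB$ is nilpotent a Nakayama argument forces $\Omega_{B/A}=0$. Combined with the flatness and finite presentation established above, this gives that $A \to B$ is étale.

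The main obstacle is the finite presentation step: one has to combine flatness of $B$ over $A$ (to get the $\mathrm{Tor}_1$ vanishing that extracts a presentation of $K$) with finite presentation of $B/IB$ over $A/I$ and a nilpotent-ideal Nakayama argument to promote finite generation of $K/IK$ to finite generation of $K$. Once this is in place, the unramified condition is a direct Nakayama argument applied to the module of relative differentials.
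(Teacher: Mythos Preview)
Your argument is correct. The finite-type step is essentially the paper's argument, repackaged as a Nakayama-style statement about the cokernel of $\phi$; the paper writes out the same iteration explicitly. Where you diverge is in two places. First, you go beyond the paper and establish finite \emph{presentation}: you use flatness of $B$ over $A$ to get $\Tor_1^A(A/I,B)=0$, identify $K/IK$ with the kernel of a finite presentation of $B/IB$, and lift generators via nilpotent Nakayama. The paper stops at finite type, which is adequate in its Noetherian context but would leave a gap in full generality; your extra step closes this. Second, for unramifiedness the paper invokes the fibrewise criterion (since $I$ is nilpotent, $\Spec(A)$ and $\Spec(A/I)$ have the same points, hence the same geometric fibres over them), whereas you argue directly that $\Omega_{B/A}\otimes_B B/IB \cong \Omega_{(B/IB)/(A/I)}=0$ and conclude $\Omega_{B/A}=0$ by nilpotency of $IB$. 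Both routes are valid; the paper's is shorter, while yours is self-contained and does not rely on the fibre characterization. One minor remark: in your last step you do not actually need $\Omega_{B/A}$ to be finitely generated, since $\Omega_{B/A}=IB\cdot\Omega_{B/A}$ and nilpotency of $IB$ already force $\Omega_{B/A}=(IB)^n\Omega_{B/A}=0$.
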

\begin{proof}
Since $I$ is a nilpotent ideal, one can easily prove that the fact that $A/I \rightarrow B/IB$ is of finite type implies that $A \rightarrow B$ is of finite type. Indeed if $p: A/I[X_1, \ldots, X_r] \rightarrow B/IB$ is a surjection, we define a morphism $q: A[X_1, \ldots, X_r] \rightarrow B$ to be an arbitrary lift of this surjection. Then for any element $b$, there exists $f(X) \in A[X_1, \ldots, X_r]$ such that $q(f(X)) - b \in IB$. Thus $q(f(X) - b = i_1 b_1 + \ldots + i_s b_s$. Now choose $f_i(X) \in A[X_1, \ldots, X_r]$ for $1 \leq i \leq s$ such that $q(f_i(X)) - b_i \in IB$. Then if $g(X) = f(X) - \sum_i f_i(X)$, we see that $q(g(X)) - b \in I^2 B$. Continuing in this manner and using the fact that $I$ is nilpotent, we see that $q$ is surjective. 

Thus now we merely need to prove that $A \rightarrow B$ is unramified. But this is immediate since $A/I \rightarrow B/IB$ is unramified. (A morphism of schemes is unramified if and only if its geometric fibres are unramified).
\end{proof}

\begin{proposition}
\label{preserving-fibre-products}
The functor $\~{\@R_n}: \Aff/\Spec(k) \rightarrow \Aff/\Spec(R_n)$ satisfies $\~{\@R_n}(X \times_Z Y) \cong \~{\@R_n}(X) \times_{\~{\@R_n}(Z)} \~{\@R_n}Y$ if $X \rightarrow Z$ is étale. 
\end{proposition}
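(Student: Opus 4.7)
The plan is to reduce to the affine case and deduce the proposition from the rigidity of étale algebras under nilpotent thickenings, using Proposition \ref{preserving-etale} as the main input.

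Write $X = \Spec(B)$, $Z = \Spec(A)$, $Y = \Spec(C)$, where $A \to B$ is étale and $A \to C$ is arbitrary. Then the statement to prove translates into showing that the natural comparison map
\[
\Phi \colon \@R_n(B) \otimes_{\@R_n(A)} \@R_n(C) \longrightarrow \@R_n(B \otimes_A C),
\]
induced by applying the ring functor $\@R_n$ to the two structural maps $B \to B \otimes_A C$ and $C \to B \otimes_A C$ and using the universal property of the tensor product, is an isomorphism of $\@R_n(C)$-algebras.

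The first step is to check that both the source and target of $\Phi$ are étale over $\@R_n(C)$. For the source, Proposition \ref{preserving-etale} gives that $\@R_n(A) \to \@R_n(B)$ is étale, and étaleness is stable under base change. For the target, since $A \to B$ is étale, so is its base change $C \to B \otimes_A C$, and a second application of Proposition \ref{preserving-etale} shows that $\@R_n(C) \to \@R_n(B \otimes_A C)$ is étale.

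The second step is to base-change $\Phi$ along the canonical surjection $\@R_n(C) \to C$, whose kernel $\@R^1_n(C)$ is nilpotent (as established earlier in this section). The cartesian square of Proposition \ref{preserving-etale} applied to $X \times_Z Y \to Y$ identifies the target of the base-changed map with $B \otimes_A C$, while the source becomes
\[
\bigl(\@R_n(B) \otimes_{\@R_n(A)} \@R_n(C)\bigr) \otimes_{\@R_n(C)} C \;\cong\; \@R_n(B) \otimes_{\@R_n(A)} C \;\cong\; B \otimes_A C,
\]
where the last isomorphism uses the cartesian square of Proposition \ref{preserving-etale} for $X \to Z$. A direct chase shows that under these canonical identifications, the base change of $\Phi$ is the identity on $B \otimes_A C$.

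The third step is to invoke the standard rigidity principle: for any ring $S$ and any nilpotent ideal $I \subset S$, the base-change functor from étale $S$-algebras to étale $S/I$-algebras is an equivalence of categories. Applied with $S = \@R_n(C)$ and $I = \@R^1_n(C)$, this immediately upgrades the fact that $\Phi$ reduces to an isomorphism modulo $\@R^1_n(C)$ to the statement that $\Phi$ itself is an isomorphism. The main obstacle in the argument is not really conceptual but rather careful bookkeeping: one must ensure that the identifications used to compare the two reductions with $B \otimes_A C$ are compatible, so that the reduced morphism is genuinely the identity and the rigidity lemma applies to $\Phi$ itself; once this is verified, the proof is essentially formal.
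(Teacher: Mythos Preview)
Your argument is correct and is essentially the same as the paper's proof, only recast in ring-theoretic rather than diagrammatic language. The paper shows, via two cartesian-square arguments, that both $\~{\@R_n}(X \times_Z Y)$ and $\~{\@R_n}(X) \times_{\~{\@R_n}(Z)} \~{\@R_n}(Y)$ are étale over $\~{\@R_n}(Y)$ and pull back to $X \times_Z Y$ along the nilpotent closed immersion $Y \hookrightarrow \~{\@R_n}(Y)$, then concludes by uniqueness of étale lifts through nilpotent thickenings; this is precisely your three steps, with your explicit comparison map $\Phi$ replacing the paper's appeal to a universal property.
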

\begin{proof}
In the diagram
\[
\xymatrix{
X \times_Z Y \ar[r] \ar[d] & X \ar[r] \ar[d] & \~{\@R_n}(X) \ar[d]\\
Y \ar[r] & Z \ar[r] & \~{\@R_n}(Z) \text{,}
}
\]
the left and right squares are cartesian and so the outer square is cartesian. In the diagram 
\[
\xymatrix{
X \times_Z Y \ar[r] \ar[d] & \~{\@R_n}(X) \times_{\~{\@R_n}(Z)} \~{\@R_n}(Y) \ar[r] \ar[d] & \~{\@R_n}(X) \ar[d] \\
Y \ar[r] & \~{\@R_n}(Y) \ar[r] & \~{\@R_n}(Z) \text{,}
}
\]
the right and outer squares are cartesian. Thus the left square is cartesian. But since $Y$ is a closed subscheme of $\~{\@R_n}(Y)$ defined by a nilpotent ideal, there is a unique étale morphism $T \rightarrow \~{\@R_n}(Y)$ such that $T \times_{\~{\@R_n}(Y)} Y  \cong X \times_Z Y$. Since both $\~{\@R_n}(X \times_Z Y) \rightarrow \~{\@R_n}(Y)$ and $\~{\@R_n}(X) \times_{\~{\@R_n}(Z)} \~{\@R_n}(Y) \rightarrow \~{\@R_n}(Y)$ satisfy this property, they must be equal. 
\end{proof}

\begin{proposition}
$((\~{\@R_n})_{!}, (\~{\@R_n})_*)$ is a Quillen adjunction.
\end{proposition}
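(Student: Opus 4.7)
The plan is to verify the statement in two stages: first that $((\~{\@R_n})_!, (\~{\@R_n})_*)$ is a Quillen pair for the global projective model structures on the simplicial presheaf categories, and then that it descends to their left Bousfield localizations yielding the local projective model structures of \cite{TV1}.

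The first stage is essentially automatic. For any functor $\sigma$ between sites, $\sigma_*$ is given by precomposition, and precomposition preserves objectwise fibrations and objectwise weak equivalences; hence it is right Quillen for the global projective structure. Equivalently, $(\~{\@R_n})_!$, being the left Kan extension along $\~{\@R_n}$, sends a representable $h_U$ to the representable $h_{\~{\@R_n}(U)}$, and therefore carries the generating cofibrations $\partial\Delta^m \otimes h_U \to \Delta^m \otimes h_U$ and generating trivial cofibrations $\Lambda^m_j \otimes h_U \to \Delta^m \otimes h_U$ to maps of exactly the same form on the target side.

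For the second stage I would use the description of the local projective structure as the left Bousfield localization of the projective structure at (a set representing) the hypercovers $H_\bullet \to h_U$ of representables for the étale topology. By the universal property of Bousfield localization, it then suffices to show that $(\~{\@R_n})_!$ sends every such hypercover to a local weak equivalence. Applied to the Čech nerve of an étale cover $\{U_i \to U\}$, the functor $(\~{\@R_n})_!$—which commutes with colimits and preserves representables—produces the simplicial object whose simplices are the iterated fibre products of $\~{\@R_n}(U_i)$ over $\~{\@R_n}(U)$; by Prop.~\ref{preserving-etale} and Prop.~\ref{preserving-fibre-products} this coincides with the Čech nerve of the family $\{\~{\@R_n}(U_i) \to \~{\@R_n}(U)\}$. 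Moreover the closed immersion $U \hookrightarrow \~{\@R_n}(U)$ is defined by the nilpotent ideal $\@R^1_n$, so underlying topological spaces agree and joint surjectivity is preserved; hence $\{\~{\@R_n}(U_i) \to \~{\@R_n}(U)\}$ is again an étale cover, and the resulting map of Čech nerves to $h_{\~{\@R_n}(U)}$ is a local weak equivalence. The case of a general hypercover follows inductively, applying the same reasoning at each simplicial level to the matching-object covers.

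The main obstacle, once Prop.~\ref{preserving-etale} and Prop.~\ref{preserving-fibre-products} are in hand, is not analytic but organisational: one must check that the combinatorial data of a hypercover passes through $\~{\@R_n}$ degree by degree. This amounts to repackaging those two propositions into the statement that $\~{\@R_n}$ is a morphism of étale sites preserving covers and covering fibre products, after which the Quillen property for the local projective structure is a formal consequence.
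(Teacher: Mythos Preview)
Your argument is correct and follows essentially the same route as the paper: both reduce the question to the fact that $\~{\@R_n}$ preserves \'etale covers and fibre products along \'etale maps, i.e.\ to Prop.~\ref{preserving-etale} and Prop.~\ref{preserving-fibre-products}. The paper simply invokes \cite{DHI}, Cor.~8.3, which packages exactly the Bousfield-localization argument you have written out, so your proof is the unpacked form of the paper's one-line citation.
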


\begin{proof}
By \cite{DHI}, Cor 8.3 it suffices to check that $\~{\@R_n}$ preserves étale covers and that it commutes with limits of finite diagrams of étale maps. This follows from Prop. \ref{preserving-etale} and Prop. \ref{preserving-fibre-products}.
\end{proof}

\begin{definition}
For any $n \geq 0$, the $n$-th Greenberg functor for $R$ is defined to be the right-derived functor $\|R (\~{\@R_n})_*: St(\Spec(R_n)) \rightarrow St(\Spec(R_0))$ and is denoted by $Gr_n$. If $X$ is a stack over $R$, we abuse notation and write $Gr_n(X)$ instead of $Gr^R_n(X \times_{\Spec(R)} \Spec(R_n))$.    
\end{definition}

\begin{convention}
Actually it would be appropriate to include a reference to $R$ in the notation for the $n$-th Greenberg functor. However, we will avoid this to prevent the notation from becoming too cumbersome. This will not lead to any confusion. 
\end{convention}

\begin{proposition}
Let $n \geq 0$ be an integer. The functor $Gr_n$ has the following properties:
\begin{itemize}
\item[(1)] $Gr_n$ preserves homotopy fibre products.
\item[(2)] $Gr_n$ takes schemes of finite type over $R_n$ to schemes of finite type over $k$. 
\item[(3)] $Gr_n$ takes smooth (étale, unramified) morphisms between schemes of finite type over $R_n$ to smooth (resp. étale, unramified) morphisms between schemes of finite type over $k$.
\item[(4)] $Gr_n$ preserves epimorphisms of stacks.
\item[(5)] $Gr_n$ takes sft-Artin stacks over $R_n$ to sft-Artin stacks over $k$. 
\end{itemize}
\end{proposition}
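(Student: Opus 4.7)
My plan is to prove the five statements roughly in the order given, since each later statement leans on the earlier ones.

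For (1), the functor $Gr_n = \|R(\~{\@R_n})_*$ is, by construction, the right derived functor of the right adjoint in a Quillen adjunction (established in the preceding proposition). Right Quillen functors preserve homotopy limits, hence in particular homotopy fibre products, so (1) is immediate.

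For (2), I would follow Greenberg's original strategy. The underlying scheme of $\@R_n$ is $\mathbb{A}^N_k$ for some $N = N(n)$ (by the presentation of $R_n$ as a free $W_{?}(k)$-module of length $e$ coming from the Eisenstein decomposition $1,\omega,\dots,\omega^{e-1}$ given earlier). Hence $\~{\@R_n}(\mathbb{A}^m_{R_n}) \cong \mathbb{A}^{mN}_k$, and for a closed subscheme $V(f_1,\dots,f_r) \subset \mathbb{A}^m_{R_n}$ the defining equations, written out in terms of the ring scheme operations on $\@R_n$, give polynomial equations over $k$ cutting out $\~{\@R_n}(V)$ inside $\mathbb{A}^{mN}_k$. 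A general affine scheme of finite type over $R_n$ admits such a closed embedding, so $\~{\@R_n}$ produces an affine scheme of finite type over $k$. A general finite type scheme is then handled by Zariski-gluing using Prop.~\ref{preserving-etale} (with open immersions as a special case of étale maps).

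For (3), I would reduce smoothness to étaleness. A smooth morphism $X \to Y$ of finite type $R_n$-schemes is Zariski-locally a composition $X \to \mathbb{A}^r_Y \to Y$ with the first map étale. Applying $\~{\@R_n}$, Prop.~\ref{preserving-etale} handles the étale part, while $\~{\@R_n}(\mathbb{A}^r_Y) = \mathbb{A}^{rN}_{\~{\@R_n}(Y)}$ is smooth affine over $\~{\@R_n}(Y)$, so the composition is smooth of finite type over $k$. The étale case is Prop.~\ref{preserving-etale}. Unramifiedness is étale-local and can be obtained by factoring as a closed immersion followed by an étale map (or, more directly, by applying the same reduction combined with (2)).

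For (4), an epimorphism of stacks $f\colon X \to Y$ in $St(\Spec R_n)$ is, by definition, a morphism for which $\pi_0(X) \to \pi_0(Y)$ is an epimorphism of étale sheaves, equivalently one that admits local lifts up to homotopy along an étale cover of $Y$. Given such a cover $\{U_\alpha \to Y\}$ with lifts $U_\alpha \to X$, Prop.~\ref{preserving-etale} tells us that $\{\~{\@R_n}(U_\alpha) \to \~{\@R_n}(Y)\} = \{Gr_n(U_\alpha) \to Gr_n(Y)\}$ is an étale cover of $Gr_n(Y)$, and functoriality of $Gr_n$ supplies the required lifts to $Gr_n(X)$. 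Hence $Gr_n(f)$ is an epimorphism. The subtle point here is that $Gr_n$ is only the \emph{derived} functor, so one needs to verify that fibrant replacement does not destroy the local sections; this is where the formalism of the local projective model structure and the fact that $\~{\@R_n}$ preserves étale covers (already used to establish the Quillen adjunction) are essential.

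Finally, for (5), I would induct on the Artin level $n$. An sft-Artin stack $X$ of level $0$ over $R_n$ is an sft-algebraic space, handled by (2) plus étale descent from (3) and (4). For the inductive step, choose a smooth sft-atlas $U \to X$ with $U$ an sft-scheme such that $U \times_X U$ is sft-Artin of strictly lower level. By (4), $Gr_n(U) \to Gr_n(X)$ is an epimorphism; by (3) and (2), $Gr_n(U)$ is an sft-scheme over $k$ and $Gr_n(U) \to Gr_n(X)$ is smooth; and by (1), $Gr_n(U) \times_{Gr_n(X)} Gr_n(U) \simeq Gr_n(U \times_X U)$, which is sft-Artin of strictly lower level by the induction hypothesis. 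These are exactly the data required to conclude that $Gr_n(X)$ is sft-Artin over $k$. I expect the main obstacle to be (4), since it requires translating the homotopical notion of epimorphism of stacks into a statement that behaves well under the derived pushforward; the remaining items are either formal or direct adaptations of classical Greenberg theory.
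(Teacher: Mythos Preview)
Your arguments for (1), (2), and (5) are fine and match the paper's. The gap is a systematic confusion in (3) and (4) between the two functors in play. Proposition~\ref{preserving-etale} is about $\~{\@R_n}\colon \Aff/\Spec(k)\to\Aff/\Spec(R_n)$: it says that an \'etale map of $k$-schemes is sent to an \'etale map of $R_n$-schemes. Item (3), on the other hand, is about $Gr_n=(\~{\@R_n})_*$ going the \emph{other} way: given an \'etale (or smooth, or unramified) map of $R_n$-schemes, one must show that $Gr_n$ of it is \'etale (resp.\ smooth, unramified) over $k$. Your line ``Prop.~\ref{preserving-etale} handles the \'etale part'' therefore does not apply; the \'etale case of (3) is exactly what needs proof, not a consequence of Prop.~\ref{preserving-etale}. (The same slip recurs in your gluing step for (2) and in (4), where you write $\~{\@R_n}(U_\alpha)=Gr_n(U_\alpha)$ for $U_\alpha$ living over $R_n$.)

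The paper's approach to (3) avoids this entirely by using the infinitesimal lifting criterion. The key computation is that for a $k$-algebra $A$ and a nilpotent ideal $I\subset A$, the kernel of $\@R_n(A)\to\@R_n(A/I)$ is again nilpotent (this follows from the explicit form of the multiplication on $W_n$, extended to $\@R_n$). Hence a lifting problem for $Gr_n(X)\to Gr_n(Y)$ along $\Spec(A/I)\hookrightarrow\Spec(A)$ translates, via the adjunction, into a lifting problem for $X\to Y$ along the nilpotent thickening $\Spec(\@R_n(A/I))\hookrightarrow\Spec(\@R_n(A))$, which is solvable because $X\to Y$ is formally smooth/\'etale/unramified. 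For (4) the paper again works from the $k$-side: for an affine $U$ over $k$, any \'etale cover of $\~{\@R_n}(U)$ arises as $\{\~{\@R_n}(U_i)\to\~{\@R_n}(U)\}$ for an \'etale cover $\{U_i\to U\}$, because $U\hookrightarrow\~{\@R_n}(U)$ is a nilpotent closed immersion (so their \'etale sites agree) and Prop.~\ref{preserving-etale} identifies the lifts. This is what makes $(\~{\@R_n})_*$ preserve local surjectivity. Your factorisation idea for smoothness (\'etale followed by affine-space projection) would work once the \'etale case of (3) is in hand, but you must supply that case by the nilpotent-kernel argument, not by invoking Prop.~\ref{preserving-etale}.
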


\begin{proof}
(1) is obvious since $(\~{\@R_n})_*$ is a right Quillen functor. (2) is proved in \cite{Gr}. 

(3) is stated in \cite{BLR} without proof. A proof is included here for completeness. Suppose $char(R) = 0$ and $char(k)=p \neq 0$. Let $x =(x_0,\ldots, x_{n-1})$ and $y=(y_0,\ldots,y_{n-1})$ denote generic elements of $W_n$. Then all the monomials appearing in the polynomials defining the product $xy$ involve both the $x_i$ and $y_i$ to non-zero degree. Indeed, this follows immediately by observing that $W(\bar{k})$ is an integral domain for any algebraic closure $\bar{k}$ of $k$. Thus we see that for any ring $A$, if $a = (a_0,\ldots,a_{n-1})$ and $(b_0, \ldots,b_{n-1})$ are elements of $W_n(A)$ such that the $a_i$ are in an ideal $I$ and the $b_i$ are in an ideal $J$, then the coordinates of $ab$ are in the ideal $IJ$. By examining the multiplication rule on $\@R_n$, we see that this argument continues to hold with $\@R_n$ in place of $W_n$. Also note that the set of elements having all coordinates in an ideal $I$ is precisely the kernel of $\@R_n(A) \rightarrow \@R_n(A/I)$. Thus we see that if $I$ is a nilpotent ideal in $A$, then the kernel of $\@R_(A) \rightarrow \@R_n(A/I)$ is a nilpotent ideal in $\@R(A)$. This proves that $Gr_n$ preserves the property of being formally smooth, étale or unramified. This proves (3) in the unequal characteristic case. The argument in the case $char(R) = char(k)$ is similar but simpler. 

To prove (4), it suffices to see that for any affine scheme $U$ over $k$, any étale cover of $\~{\@R_n}(U)$ can be refined by a cover of the form $\{\~{\@R_n}(U_i) \rightarrow \@R_n(U\}_i$. But this is obvious since $U$ is a closed subscheme of $\@R(U)$ defined by a nilpotent ideal.

(5) now follows immediately since the notion of an Artin stack is defined in terms of affine schemes, smoothness and homotopy fibre products.  
\end{proof}

Let $m \geq n$ be non-negative integers. Let $U$ be an affine scheme over $k$. The ring-scheme homomorphism $\@R_m \rightarrow \@R_n$ induces a morphism $e^m_n: \~{\@R_n}(U) \rightarrow \~{\@R_m}(U)$. These morphisms induce the ``truncation morphisms" as follows: 

\begin{definition}
Let $X$ be a stack over $R$. The truncation morphism $\tau^m_{n,X}: Gr_m(X) \rightarrow Gr_n(X)$ is the one that maps 
\[
x_m \in Gr_m(X)(U): \~{\@R_m}(U) \longrightarrow X
\]
to 
\[
x_n \in Gr_n(X)(U): \~{\@R_n}(U) \stackrel{e^m_n}{\longrightarrow} \~{\@R_m}(U) \stackrel{x_m}{\longrightarrow X} 
\]
for any affine scheme $U$ over $k$. 

Also, for every $n$ and for any affine scheme $U$ let $\tau_{n,X}(U)$ denote the function 
\[
\pi_0(X(\@R(U))) \rightarrow \pi_0(X(\@R_n(U))) \equiv \pi_0(Gr_n(X)(U)) \text{.}
\] 
If there is no risk of confusion, we will write $\tau^m_n$ instead of $\tau^m_{n,X}$. We will also write $\tau_{n}$ instead of $\tau_{n,X}(U)$ (i.e. we will omit the reference to both $U$ and $X$).  
\end{definition}

\section{Lifting $R$-valued points to an atlas}
\label{lifting-R-valued-points-to-atlas}

Let $X$ be an Artin stack and let $p: U \rightarrow X$ be a smooth atlas with $U$ being an affine scheme. Then if $K$ is a field and $x: \Spec(K) \rightarrow X$ is a morphism, we may not always be able to find a lift $u: \Spec(K) \rightarrow X$. However, given $x$, $p$ can be chosen appropriately so that a lift does exist, as we will prove in Lemma \ref{lifting-points-to-atlas}. If $X$ is an sft-Artin stack over a noetherian base scheme, $p$ can be chosen to be independent of $x$. 

Lemma \ref{lifting-points-to-atlas} is proved in (\cite{Kn}, Thm. II.6.4) for algebraic spaces and the argument is generalized(\cite{LMB}, Chapter 6) for Artin $1$-stacks. The proof for Artin $n$-stacks is based on the same technique with some small modifications. 

Let $X \rightarrow Y$ be a morphism of stacks. Then $(X/Y)^d$ denotes the $d$-fold fibre product 
\[
\underbrace{X \times^h_Y X \times^h_Y \ldots \times^h_Y X}_{\text{$d$ times}} \text{.}
\]
Let $\!S_d$ denote the symmetric group on $d$ letters. $\!S_d$ acts on $(X/Y)^d$ by permuting the factors and the quotient stack is denoted by $\Sigma_d(X/Y)$. We observe that this construction is well-behaved with respect to base changes of the form $Y^{\prime} \rightarrow Y$.

\begin{lemma}
\label{lifting-points-to-atlas}
Let $S$ be an arbitrary base scheme and let $X$ be an Artin stack over $S$. Let $K$ be a field and suppose we have a morphism $x: \Spec(K) \rightarrow X$. Then there exists a diagram
\[
\xymatrix{
 & V \ar[d]^{\phi} \\
\Spec(K) \ar[ur]^{v} \ar[r]_x & X
}
\]
which commutes upto homotopy, where $V$ is an affine scheme and $\phi$ is smooth. 
\end{lemma}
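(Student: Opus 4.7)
My plan is to argue by induction on the level $n$ of the Artin stack $X$. The base case $n=0$, where $X$ is an algebraic space, is Knutson's theorem \cite{Kn}, Thm.~II.6.4, already cited in the paragraph preceding the lemma.

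For the inductive step, assuming the result for Artin $(n-1)$-stacks, I fix any smooth atlas $P\colon U\to X$ with $U$ an affine scheme and form the homotopy fibre product $W:=U\times^h_X \Spec(K)$. Since the diagonal of an Artin $n$-stack is representable by Artin $(n-1)$-stacks, $W$ is itself an Artin $(n-1)$-stack; smoothness and surjectivity of $P$ make $W\to\Spec(K)$ smooth and $W$ non-empty. The same representability argument shows that for every $d$ the iterated fibre product $(U/X)^d$ is an Artin $(n-1)$-stack and that $(U/X)^d\to X$ is smooth. To produce a point of $W$ over a finite separable extension $K'/K$, I choose any geometric point $\bar w\colon \Spec(\bar K)\to W$ and apply the inductive hypothesis to obtain an affine scheme $V_0$, a smooth morphism $V_0\to W$ and a $\bar K$-lift of $\bar w$; since $V_0$ is then a smooth affine $K$-scheme with a geometric point, a standard argument with the Jacobian criterion yields a section over some finite separable $K'/K$ whose image $w\colon \Spec(K')\to W$ is the desired point. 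Setting $d=[K':K]$, the $\Sigma_d$-construction converts $w$ into a $K$-point of $\Sigma_d(W/\Spec(K))$ --- geometrically the unordered tuple of the $d$ Galois conjugates of $w$ --- and the base-change compatibility $\Sigma_d(W/\Spec(K))\simeq \Sigma_d(U/X)\times^h_X \Spec(K)$ noted just before the lemma produces a $K$-point $\sigma\colon\Spec(K)\to \Sigma_d(U/X)$ lifting $x$.

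The decisive observation is that $\sigma$ lies in the free locus of the $\!S_d$-action on $(U/X)^d$, since the $d$ Galois conjugates of $w$ are pairwise distinct geometric points of $W$. On the corresponding open substack $\Sigma_d(U/X)^{\mathrm{free}}\subseteq\Sigma_d(U/X)$, the projection $(U/X)^d\to\Sigma_d(U/X)^{\mathrm{free}}$ is an \'etale $\!S_d$-torsor, whence $\Sigma_d(U/X)^{\mathrm{free}}$ is itself an Artin $(n-1)$-stack --- it is \'etale-locally just $(U/X)^d$. A final application of the inductive hypothesis to this Artin $(n-1)$-stack at the $K$-point $\sigma$ yields an affine scheme $V$, a smooth morphism $V\to\Sigma_d(U/X)^{\mathrm{free}}$, and a $K$-point $v\colon \Spec(K)\to V$ lifting $\sigma$. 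The composite
\[
V\;\longrightarrow\;\Sigma_d(U/X)^{\mathrm{free}}\;\hookrightarrow\;\Sigma_d(U/X)\;\longrightarrow\; X
\]
is then smooth (the last arrow, being the stack quotient of the smooth morphism $(U/X)^d\to X$ by the finite group $\!S_d$, is itself smooth) and $\phi\circ v\simeq x$, completing the induction.

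The main obstacle I expect is justifying that $\Sigma_d(U/X)^{\mathrm{free}}$ really is an Artin $(n-1)$-stack, i.e.\ that a free finite-group quotient of an Artin $(n-1)$-stack remains at the same level. This is the higher-stack analogue of the classical fact that a free finite-group quotient of an algebraic space is again an algebraic space; while standard in spirit, it must be verified carefully by inspecting the diagonal of the quotient in the framework of \cite{TV2}. Secondarily, the Galois-descent identification of $K$-points of $\Sigma_d(W/\Spec(K))$ with orbits of $K'$-points of $W$ is a routine Weil-restriction computation that one should spell out in full.
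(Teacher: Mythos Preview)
Your overall strategy --- induction on the level $n$ via the $\Sigma_d$-construction --- is the same as the paper's. The divergence is in how you drop the level. The paper shows that the \emph{full} quotient $\Sigma_d(U/X)$ is already an $(n-1)$-stack whenever $n\geq 2$, using the long exact sequence of homotopy groups for the $\!S_d$-torsor $(U/X)^d\to\Sigma_d(U/X)$ (the fibre is the discrete set $\!S_d$, so the $\pi_i$ agree for $i>1$). It then treats the terminal case $n=1$ separately, by embedding $\!S_d\hookrightarrow GL_d$ and passing to the associated $GL_d$-torsor, which is trivial over any semilocal base; this produces an algebraic space over $\Sigma_d(U/X)$ to which the $K$-point lifts, and Knutson finishes. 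You instead try to work throughout with the free locus $\Sigma_d(U/X)^{\mathrm{free}}$, which would give a uniform argument including $n=1$ and avoid the $GL_d$ trick.

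The gap is your assertion that $\sigma$ lands in the free locus, i.e.\ that the $d$ Galois conjugates of $w$ are pairwise non-isomorphic in $W(L)$. This holds when $W$ is a scheme or algebraic space (take $K'$ to be the residue field of a closed point), but can fail once $W$ is a genuine stack. Concretely, take $S=\Spec(\|R)$, $X=B^2(\|Z/2)$, the canonical atlas $U=\Spec(\|R)\to X$, and let $x\colon\Spec(\|R)\to X$ classify the non-neutral $\|Z/2$-gerbe corresponding to the nonzero class in $H^2(\|R,\|Z/2)\cong\|Z/2$. Then $W=U\times^h_X\Spec(\|R)$ is precisely that gerbe: $W(\|R)=\emptyset$, while $W_{\|C}\simeq B(\|Z/2)_{\|C}$ and hence $\pi_0(W(\|C))=H^1(\|C,\|Z/2)=0$ is a single point. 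Any $\|C$-point $w$ therefore satisfies $w\cong w^{\sigma}$ in $W(\|C)$, the pair $(w,w^{\sigma})$ has nontrivial $\!S_2$-stabiliser, and your $\sigma$ does \emph{not} lie in the free locus. Crucially, $w\cong w^{\sigma}$ does not force $w$ to descend to $\|R$ --- the cocycle obstruction is exactly the nontrivial gerbe class --- so you cannot repair this by shrinking $K'$. The obstacle you yourself flagged (that $\Sigma_d(U/X)^{\mathrm{free}}$ is an Artin $(n-1)$-stack) is in fact not the problem; the paper's long-exact-sequence argument even shows the full $\Sigma_d(U/X)$ is an $(n-1)$-stack once $n\geq 2$. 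It is getting $\sigma$ into the free locus that fails, and this is why the paper never invokes freeness.
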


\begin{proof}
Choose a smooth morphism $U \rightarrow X$, where $U$ is an affine scheme, such that the stack $U_x:=U \times_{X,x}^h \Spec(K)$ is non-empty.  Since $U_x$ is non-empty, there exists a $K$-morphism $x^{\prime}:\Spec(K^{\prime}) \rightarrow U_x$ where $K^{\prime}$ is a separable finite field extension of $K$. Let $d = [K^{\prime}: K]$. Let $L$ be a Galois extension of $K$ containing $K^{\prime}$, so that $\Spec(K^{\prime}) \times_{\Spec(K)} \Spec(L) = \coprod_{J} \Spec(L)$ where $J$ is a set of representatives for the cosets of $Gal(L/K^{\prime})$ in $Gal(L/K)$. Note that the action of $Gal(L/K)$ on $\Spec(K^{\prime}) \times_{\Spec(K)} \Spec(L)$ merely permutes the components of $\coprod_{J} \Spec(L)$. 

Clearly, $x^{\prime}$ induces a morphism $\Spec(L) \times \{1,\ldots,d \} \rightarrow U_x \times_{\Spec(K)}^h \Spec(L)$, i.e. a morphism $\Spec(L) \rightarrow (U_x/\Spec(K))^d \times_{\Spec(K)}^{h} \Spec(L)$. On composing with the projection, we get a morphism $\Spec(L) \rightarrow (U_x/\Spec(K))^d$. On composing with the quotient map for the $\!S_d$ action, we get a morphism $\Spec(L) \rightarrow \Sigma_d(U_x/\Spec(K))$. As we noted above, the action of $Gal(L/K)$ permutes the components of $\Spec(K^{\prime}) \times_{\Spec(K)} \Spec(L) = \coprod_{J} \Spec(L)$ and thus the $L$-valued point $\Spec(L) \rightarrow \Sigma_d(T_1/T)$ that we have obtained is invariant under the action of $Gal(L/F)$, i.e. it is an $F$-valued point. This gives us an $F$-valued point of $\Sigma_d(U/X)$. 

Since $U\rightarrow X$ is smooth, so is $(U/X)^d \rightarrow X$. The quotient morphism $(U/X)^d \rightarrow \Sigma_d(U/X)$ is obviously a smooth covering map and thus $\Sigma_d(U/X) \rightarrow X$ is smooth. If $X$ is an $n$-stack, it is clear that $(U/X)^d$ is an $(n-1)$-stack. Let $s: Z \rightarrow (U/X)^d$ be a geometric point (i.e. $Z$ is the spectrum of a separably closed field) of $(U/X)^d$ and let $t$ be its image in $\Sigma_d(U/X)$. Then we have the long exact sequence of homotopy groups 
\begin{eqnarray*}
\ldots \rightarrow \pi_i(F_t(Z),s) \rightarrow \pi_i((U/X)^d(Z), s) \rightarrow \pi_i(\Sigma_d(U/X))(Z),t) \\
\rightarrow \pi_{i-1}(F_t(Z),s) \rightarrow \ldots
\end{eqnarray*}
where $F_t$ is the fibre of $(U/X)^d \rightarrow \Sigma_d(U/X)$ at $t$. Since $F_t$ is isomorphic to $Z \times \!S_d$, we see immediately that the $\pi_i((U/X)^d(Z),s) \cong \pi_i(\Sigma_d(U/X)(Z),t)$ for $i>1$. Thus if $X$ is an $n$-stack for $n \geq 2$, $\Sigma_d(U/X)$ is an $(n-1)$-stack. Now replace $X$ by $\Sigma_d(U/X)$ and repeat the procedure until we come to the case $n=1$. If $X$ is an Artin $1$-stack, $(U/X)^d$ is an algebraic space and thus $\Sigma_d(U/X)$ is the quotient of an algebraic space under the action of a finite group. 

Now the required result follows immediately from (\cite{LMB} Thm. 6.1) and we briefly reproduce the argument. Choose the usual embedding of $\!S_d$ into the group scheme $GL_{n,S}$. Let $V^{\prime}$ be the quotient of the action of $\!S_d$ on $(U/X)^d \times_S GL_{n,S}$. Then it can be checked that $V^{\prime}$ is an algebraic space and that $V^{\prime}$ is a $GL_{n,S}$-torsor over $\Sigma_{d}(U/X)$. Thus for any morphism $T \rightarrow \Sigma_d(U/X)$ from a semi-local scheme into $\Sigma_d(U/X)$, there exists a lift $T \rightarrow V^{\prime}$. In particular, there exists a morphism $v^{\prime}: \Spec(K) \rightarrow V^{\prime}$ lifting $x$. 

Finally, now we apply (\cite{Kn}, Thm. II.6.4) to construct an étale map $V \rightarrow V^{\prime}$ such that $V$ is an affine scheme and such that there exists a $v: \Spec(K) \rightarrow V$ lifting $v^{\prime}$. 
\end{proof}

We note that in the above lemma, the morphism $V \rightarrow X$ is specifically chosen for the given morphism $x: \Spec(K) \rightarrow X$. However, for sft-Artin stacks over a noetherian base, we are able to strengthen this result.

\begin{definition}
Let $p: X \rightarrow Y$ be a morphism of Artin stacks. 
\begin{itemize}
\item[(1)] We say that $p$ is of f-class $n$ if for any field $K$ and any morphism $y: \Spec(K) \rightarrow Y$, there exists a finite field extension $L$ of $K$ with $[L:K] \leq n$ such that there exists a morphism $x: \Spec(L) \rightarrow X$ such that the square
\[
\xymatrix{
\Spec(L) \ar[r]^x \ar[d] &  X \ar[d]^f \\
\Spec(K) \ar[r]^y  & Y 
}
\]
commutes upto homotopy.
\item[(2)] We say that $p$ is f-surjective if it is of f-class $0$. In other words, for any field $K$, the morphism $\pi_0(X(K)) \rightarrow \pi_0(Y(K))$ is surjective. 
\end{itemize}
\end{definition}

\begin{lemma}
\label{finite-f-class}
Let $p: X \rightarrow Y$ be a morphism of sft-Artin stacks over a noetherian base scheme $S$. Then there exists a integer $n$ such that $p$ is of f-class $n$. 
\end{lemma}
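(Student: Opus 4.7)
The plan is Noetherian induction on $Y$. Since $Y$ is sft over the noetherian base $S$, the underlying topological space $|Y|$ is noetherian, so the reduced closed substacks of $Y$ satisfy the descending chain condition. It therefore suffices to show that for every non-empty reduced closed substack $Z \subseteq Y$ there exists a non-empty open substack $U \subseteq Z$ over which the pullback of $p$ has finite f-class. Iterating on the closed complement $Z \setminus U$ exhausts $|Y|$ by finitely many pieces, and the global f-class bound $n$ is the maximum of the local ones.

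To produce such a $U$ inside a given $Z$, choose a smooth atlas $q: W \to Z$ with $W$ an affine scheme of finite type over $S$, and let $w$ be a generic point of an irreducible component of $W$ with residue field $K$. Apply Lemma~\ref{lifting-points-to-atlas} to the fiber $X \times_Y \Spec(K)$, which is an sft-Artin stack over the field $K$: this gives a finite extension $L/K$ and a morphism $\Spec(L) \to X$ lifting the composition $\Spec(K) \to W \to Z \to Y$. Apply Lemma~\ref{lifting-points-to-atlas} once more to further lift this morphism through a smooth atlas $V \to X$ by an affine scheme of finite type, possibly after enlarging $L$ (still by a bounded degree since the fiber of $V \to X$ at $\Spec(L)$ is a fixed scheme). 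Standard spreading-out then extends the resulting $\Spec(L) \to V$ to a morphism $T \to W'$ of degree $[L:K]$ defined over a Zariski-open neighborhood $W' \subseteq W$ of $w$, together with a compatible morphism $T \to V$; shrinking $W'$ if necessary, we may assume $T \to W'$ is finite flat of degree $[L:K]$.

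Let $U \subseteq Z$ be the open substack which is the image of $W'$ under $q$ (open because smooth morphisms are open). For any field-valued point $z: \Spec(K') \to U$, two lifting steps produce a lift to $X$: first lift $z$ through the atlas $q$ at a bounded cost (see the next paragraph), then exploit the finite flat morphism $T \to W'$ to lift further. For the second step, the fiber $T_{w'}$ over any field-valued point $w': \Spec(K'') \to W'$ is a finite $K''$-scheme of degree $[L:K]$ and thus has a closed point with residue field of degree at most $[L:K]$ over $K''$. Composing with $T \to V \to X$ yields a lift of $z$ to $X$ of total degree bounded by the product of $[L:K]$ and the f-class of $q$ over $U$.

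The main obstacle is precisely this uniform bound on the f-class of the smooth surjective atlas $q : W \to Z$. For this we exploit the étale-local structure of smooth morphisms: étale-locally on $W$, the morphism $q$ factors as an étale cover composed with a projection from an affine bundle, the latter admitting a section (contributing f-class $1$) and the former having bounded generic fiber cardinality on each irreducible component of $W$. By quasi-compactness of $W$, only finitely many such factorizations are needed, and the same generic-point-plus-spreading-out argument (applied now to $q$ itself, via Noetherian induction on $Z$) yields a uniform f-class bound for $q$ on an open substack of $Z$; after shrinking $U$ to lie inside this open substack, the two bounds combine multiplicatively to give the desired finite f-class over $U$, closing the induction.
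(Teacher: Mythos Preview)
Your approach---Noetherian induction on $|Y|$ combined with spreading out from generic points---is genuinely different from the paper's, which proceeds by a direct case analysis (both affine; both algebraic spaces; target affine; general target) and, in the last case, by induction on the geometric level of $Y$. The paper's base case is concrete: for a morphism of affine schemes the f-class is bounded in terms of the degrees of the defining polynomials of the fibre.

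There is, however, a real gap in your treatment of the atlas $q: W \to Z$. The assertion that ``étale-locally on $W$, the morphism $q$ factors as an étale cover composed with a projection from an affine bundle'' is the local structure theorem for smooth morphisms \emph{of schemes}; it does not apply as stated when the target $Z$ is a higher Artin stack. Indeed, for a point $\Spec(K) \to Z$ the fibre $W \times^h_Z \Spec(K)$ is only $(n-1)$-geometric when $Z$ is $n$-geometric; it need not be a scheme, so no such factorisation is available. Your fallback---applying ``the same generic-point-plus-spreading-out argument \ldots via Noetherian induction on $Z$'' to $q$ itself---does not terminate: $q$ is again a morphism of sft-Artin stacks and nothing has decreased. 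The paper resolves exactly this difficulty by noting that an atlas $V \to Y$ of an $m$-geometric stack is $(m-1)$-representable, so a secondary induction on the geometric level halts the recursion. Your argument needs some such auxiliary descent parameter; without it the induction is circular.

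A smaller point: Lemma~\ref{lifting-points-to-atlas} lifts a \emph{given} $K$-point through a suitably chosen smooth atlas; it does not by itself furnish a point of $X \times^h_Y \Spec(K)$ over a finite extension of $K$. What you actually use there is the elementary fact that a non-empty sft-Artin stack over $K$ has a point over some finite extension (choose any affine atlas and take a closed point), which is separate from that lemma.
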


\begin{proof}
We break the proof down into cases:

\noindent
\textit{(Case 1) $X$ and $Y$ are affine schemes:} Then if $Y = \Spec(R)$ then $X = \Spec(R[X_1, \ldots, X_r]/I)$ where $I=<h_1, \ldots, h_s>$ is some finitely generated ideal of the noetherian ring $R[X_1, \ldots, X_r]$. Now if $y$ is a point of $Y$ given by a ring homomorphism $R \rightarrow K$ for some field $K$, the fibre $X \times_{Y,y} \Spec(K)$of $p$ over $Y$ is a scheme of finite type over $K$ whose underlying set is the set of solutions of the images of the polynomials $h_i$ in the ring $K[X_1, \ldots, X_r]$. Clearly, there exists a number $n$ depending only on the degrees of the polynomials $h_i$ such that there exists a field $L$ with $[L:K]$ and a $K$-morphism $\Spec(L) \rightarrow X \times_{Y,y} \Spec(K)$. This proves the result when $X$ and $Y$ are affine schemes. 

\noindent
\textit{(Case 2) $X$ and $Y$ are algebraic spaces:} All algebraic spaces of finite type over $S$ have a finite stratification into locally closed subpsaces which are isomorphic to affine schemes. Now applying (Case 1) to suitably chosen stratifications of $X$ and $Y$ we get the desired result.

\noindent
\textit{(Case 3) $Y$ is an affine scheme $p$ is arbitrary:} Note that if we have morphisms $F \rightarrow G \rightarrow H$ of Artin stacks over $S$ and if the statement of the lemma is true for the morphisms $F \rightarrow H$ then it is also true for the morphism $G \rightarrow H$. Now let $U \rightarrow X$ be a smooth atlas of $X$ with $U$ being an affine scheme of finite type over $k$. The result follows from (Case 1). 

\noindent
\textit{(Case 4) The general case:} Let $k \geq -1$ be an integer such that $Y$ is $k$-geometric and $p$ is $k$-representable (see \cite{To} for this terminology). We prove the result by induction on $k$. The case $k=-1$ is covered in (Case 1). Suppose the result is true for $k \leq m-1$. 

Now suppose $Y$ is $m$-geometric and $Y$ is $m$-representable. Let $V \rightarrow Y$ be a smooth atlas where $V$ is an affine scheme. By the observation in (Case 3), it is enough to prove the result for the composition of the morphisms $V \times_Y X \rightarrow X \rightarrow Y$. But this is also the composition of the morphisms $V \times_Y X \rightarrow V \rightarrow Y$. The statement of the lemmma holds for $V \times_Y X  \rightarrow V$ by (Case 3). The morphism $V \rightarrow Y$ is $(m-1)$-representable and thus the statement of the lemma holds for this morphism by the induction hypothesis. This completes the proof.
\end{proof}

\begin{lemma}
\label{f-surjective-atlas}
Let $X$ be an sft-Artin stack over a noetherian base scheme $S$. Then there exists an affine scheme $V$ and a smooth covering map $V \rightarrow X$ which is f-surjective. 
\end{lemma}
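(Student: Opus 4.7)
The plan is to prove the lemma by induction on the stacky level of $X$, combining the $\Sigma_d$-construction from the proof of Lemma \ref{lifting-points-to-atlas} with the uniform bound supplied by Lemma \ref{finite-f-class}. The key idea is that while Lemma \ref{lifting-points-to-atlas} produces an affine smooth lift tailored to a single field point $x$, the parameter $d$ appearing in that construction is bounded uniformly by the f-class of any smooth atlas $U \to X$; taking a disjoint union over all admissible values of $d$ converts that pointwise statement into the desired f-surjective cover.

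For the inductive step, let $X$ be an sft-Artin $n$-stack with $n \geq 1$. Pick any affine smooth atlas $U \to X$; by Lemma \ref{finite-f-class} this morphism has finite f-class $m$. For each $d \in \{1, \ldots, m\}$ form $\Sigma_d(U/X)$ together with its smooth structure morphism to $X$. By the long-exact-sequence argument of Lemma \ref{lifting-points-to-atlas}, $\Sigma_d(U/X)$ has strictly smaller stacky level than $X$ when $n \geq 2$; the case $n = 1$ reduces via the $GL_d$-torsor construction (also from Lemma \ref{lifting-points-to-atlas}) to an algebraic space $V'_d$ equipped with a smooth, f-surjective morphism to $\Sigma_d(U/X)$ (f-surjectivity here follows from $H^1(K, GL_d) = 0$ for any field $K$). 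In either case the inductive hypothesis (or the base case below) yields an affine scheme $V_d$ with a smooth, f-surjective morphism to $\Sigma_d(U/X)$. Setting $V := \bigsqcup_{d=1}^m V_d$ produces an affine scheme with a smooth map to $X$, and f-surjectivity is verified by retracing the argument of Lemma \ref{lifting-points-to-atlas}: given $x : \Spec(K) \to X$, the bound $m$ provides a lift of $x$ to $U$ over a separable extension $K'/K$ of some degree $d \leq m$, and the Galois-descent step then supplies a $K$-point of $\Sigma_d(U/X)$ lifting $x$, which in turn lifts to $V_d \subseteq V$ by f-surjectivity of $V_d \to \Sigma_d(U/X)$.

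For the base case $X$ is an sft-algebraic space over the noetherian base $S$. Such an $X$ admits a finite stratification into locally closed affine subschemes $X = \bigsqcup_i Z_i$; for each $Z_i$, Knutson's structure theorem (\cite{Kn}) yields an étale morphism $\tilde{U}_i \to X$ with $\tilde{U}_i$ affine and containing $Z_i$ as a subscheme, in such a way that every field point of $Z_i$ lifts to $\tilde{U}_i$ over its base field. The disjoint union $V = \bigsqcup_i \tilde{U}_i$ is then the required affine, étale, f-surjective cover of $X$.

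The main obstacle I anticipate lies in the base case: extracting from Knutson's theorem a version that yields an étale affine neighborhood covering an entire locally closed affine stratum, rather than a neighborhood of a single chosen point. If only the pointwise form is available, the remedy is a noetherian induction, applying Knutson at the generic point of each irreducible component of $X$ to obtain an étale affine neighborhood of a dense open and then iterating on the closed complement until the whole space is covered.
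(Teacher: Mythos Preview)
Your proposal is correct and follows essentially the same route as the paper: both use the uniform f-class bound from Lemma~\ref{finite-f-class} together with the $\Sigma_d$-construction of Lemma~\ref{lifting-points-to-atlas} to descend the stacky level, and both handle the algebraic-space base case by noetherian induction, lifting generic points via Knutson's theorem and iterating on the closed complement (exactly the fix you anticipate in your final paragraph). The only cosmetic difference is that the paper replaces your disjoint union $\bigsqcup_{d=1}^{m}\Sigma_d(U/X)$ by the single stack $\Sigma_{m!}(U/X)$: since every $d\le m$ divides $m!$, a $K$-point of $\Sigma_d(U/X)$ maps to a $K$-point of $\Sigma_{m!}(U/X)$ by repeating each coordinate $m!/d$ times, so one stack already absorbs all the degrees and the union is unnecessary.
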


\begin{proof}
The proof is based on the arguments in the proof of Lemma \ref{lifting-points-to-atlas}. Thus we will refer to that proof for the details. 

Suppose $X$ is an $n$-stack. Choose any smooth covering map $U \rightarrow X$. By Lemma \ref{finite-f-class}, there exists an $n$ such that $U \rightarrow X$ is of f-class $n$. Then by the argument in the proof of Lemma \ref{lifting-points-to-atlas}, the map $\Sigma_{n!}(U/X)$ is f-surjective. Then $\Sigma_{n!}(U/X)$ is an $(n-1)$-stack. Proceeding as in the proof of Lemma \ref{lifting-points-to-atlas}, we get a smooth covering map $X^{\prime} \rightarrow X$ which is $f$-surjective and $X^{\prime}$ is an algebraic space. Thus it remains to construct an f-surjective smooth covering map $V \rightarrow X^{\prime}$ where $V$ is an affine scheme. 

Let $x$ be any point of $X^{\prime}$ and let $K$ be the residue field of $X^{\prime}$ at $x$. Let $\-{\{x\}}$ denote the closure of $x$ in $X^{\prime}$ and let $Z \subset \-{\{x\}}$ be an open dense subscheme of $\-{\{x\}}$. Using Lemma \ref{lifting-points-to-atlas} (or, more honestly, \cite{Kn}, Thm. II.6.4), there exists a smooth (or even étale) morphism $V_x \rightarrow X^{\prime}$ such that $x: \Spec(K) \rightarrow X$ can be lifted to $V_x$. Then it follows that there is a dense open subscheme $U_x \subset Z$ such that the immersion $U_x \rightarrow X^{\prime}$ has a lift $U_x \rightarrow V_x$. In particular $V_x \times_{X^{\prime}} U_x \rightarrow U_x$ is f-surjective. 

Now we can apply this construction to all the generic points of the top-dimensional components of $X^{\prime}$. This gives us a dense open subscheme $U_0 \subset X^{\prime}$ and a smooth map $V_0 \rightarrow X^{\prime}$, the image of which contains $U_0$ and such that $V_0 \times_{X^{\prime}} U_0 \rightarrow U_0$ is f-surjective. Then we apply this argument to all the generic points of the top-dimensional components of $X^{\prime} \backslash U_0$. Proceeding in this manner and using the fact that $X^{\prime}$ is a noetherian space, we get the required result. 
\end{proof}

\begin{corollary}
\label{visible-atlas}
Let $X$ be an sft-Artin stack over a complete discrete valuation ring $A$. Let $\alpha$ be a uniformizing parameter in $A$. Then $X$ has an atlas $U \rightarrow X$ where $U$ is an affine scheme of finite type over $A$ such that the maps $U(A) \rightarrow \pi_0(X(A))$ and $U(A/\alpha^{n+1}) \rightarrow \pi_0(X(A/\alpha^{n+1}))$ for all $n \geq 0$ are surjective. 
\end{corollary}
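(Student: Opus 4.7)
The plan is to apply Lemma \ref{f-surjective-atlas} to $X$ to obtain a smooth atlas $p \colon U \to X$ with $U$ an affine scheme (necessarily of finite type over $A$, since $X$ is sft over $A$ and $p$ is of finite presentation) which is f-surjective, and then show that this same $U$ already satisfies the stronger surjectivity statements. Writing $A'$ for either $A$ or $A/\alpha^{n+1}$ and fixing a class $x \in \pi_0(X(A'))$, lifting $x$ to a point of $U(A')$ is the same as producing an $A'$-point of the pullback stack $Y := U \times_X^h \Spec(A')$. Since smoothness of stack morphisms is stable under base change, $Y$ is an sft-Artin stack smooth over $\Spec(A')$, and the f-surjectivity of $p$ applied to $x|_{\Spec(k)}$ gives $Y(k) \neq \emptyset$. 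The problem thus reduces to the following claim: \emph{any sft-Artin stack $Y$ smooth over $A'$ with $Y(k) \neq \emptyset$ admits an $A'$-point.}

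To prove this claim, apply Lemma \ref{f-surjective-atlas} once more, this time to $Y$, obtaining a smooth f-surjective atlas $q \colon W \to Y$ with $W$ an affine scheme of finite type over $A'$. The composite $W \to Y \to \Spec(A')$ is a smooth morphism of schemes (composition of smooth morphisms, and a morphism between schemes regarded as stacks), so $W$ is smooth affine over $A'$. By f-surjectivity of $q$, $W(k) \neq \emptyset$; pick $w_0 \in W(k)$. Classical formal smoothness of $W$ over $A'$ along the square-zero thickenings $\Spec(A/\alpha^{m}) \hookrightarrow \Spec(A/\alpha^{m+1})$ inductively produces compatible lifts $w_m \in W(A/\alpha^{m+1})$ extending $w_0$. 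If $A' = A/\alpha^{n+1}$ the induction terminates at $m = n$ and gives $w \in W(A')$; if $A' = A$, one collects the entire tower $(w_m)_{m \geq 0}$, and the identification $W(A) = \lim_m W(A/\alpha^{m+1})$ (valid since $W$ is affine of finite presentation and $A$ is $\alpha$-adically complete) assembles these into a single $w \in W(A)$. Projecting via $q$ gives the desired $A'$-point of $Y$, which in turn projects to a lift of $x$ in $U(A')$.

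The main obstacle is the reduction carried out in the second paragraph: one must confirm that applying Lemma \ref{f-surjective-atlas} to the pullback stack $Y$ really does produce a smooth affine cover $W$, and that the composite $W \to \Spec(A')$ is schematically smooth so that classical formal smoothness applies. Once this bypass around the higher-stack structure of $Y$ is in place, no further recursion on the geometric level of $X$ is needed, since all the homotopical bookkeeping is absorbed into the use of the homotopy fibre product $Y$ in the first paragraph, and the same atlas $U$ works uniformly for all $A'$.
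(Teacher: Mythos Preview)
Your proof is correct and follows essentially the same route as the paper: choose an f-surjective smooth atlas $U\to X$ via Lemma~\ref{f-surjective-atlas}, reduce lifting a point $x\in\pi_0(X(A'))$ to finding an $A'$-point of the smooth fibre $Y=U\times_X^h\Spec(A')$, pass to a smooth affine atlas of $Y$ on which the $k$-point survives, and then invoke Hensel/formal smoothness. The only cosmetic difference is that the paper produces the auxiliary atlas $V_t\to U_t$ by appealing (implicitly) to Lemma~\ref{lifting-points-to-atlas} for the single $k$-point $u$, whereas you invoke Lemma~\ref{f-surjective-atlas} a second time to get an f-surjective $W\to Y$; either works, and your worry in the last paragraph is unfounded since $Y$ is an sft-Artin stack over the noetherian base $\Spec(A')$ and the composite $W\to Y\to\Spec(A')$ is smooth as a composition of smooth morphisms.
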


\begin{proof}
Let $K$ denote the residue field of $A$. Using Lemma \ref{f-surjective-atlas}, there exists an f-surjective smooth atlas $U \rightarrow X$ such that $U$ is an affine scheme. 

We prove that $U(A) \rightarrow \pi_0(X(A))$ is surjective. (The proof for the maps $U(A/\alpha^{n+1}) \rightarrow \pi_0(X(A/\alpha^{n+1}))$ is exactly the same.) Indeed, pick any morphism $t: \Spec(A) \rightarrow X$. It suffices to show that the smooth stack $U_t: U \times_{X,t}^h \Spec(A)$ has an $A$-valued point. By construction, $U_t$ has a $K$-valued point $u$. We construct an atlas $V_t \rightarrow U_t$ such that $V_t$ is a (smooth) affine scheme over $A$ and such that $u$ lifts to $V_t$. Now we already know by Hensel's lemma that this lift can be extended to an $A$-valued point of $V_t$ which gives an $R$-valued point of $U_t$. This completes the proof.
\end{proof}

\begin{remark}
If, in Lemma \ref{visible-atlas}, $K$ is a finite field, the result follows from Lemma \ref{lifting-points-to-atlas} itself since then $\pi_0(X(K))$ is known to be a finite set by (\cite{To}, Prop. 3.5). On the other hand, once we have Lemma \ref{f-surjective-atlas}, we get an alternative proof of the fact that $\pi_0(X(K))$ is finite. Indeed, choose a smooth atlas $U \rightarrow X$ where $U$ is an affine scheme. Then by Lemma \ref{finite-f-class}, it follows that there exists a finite algebraic extension $L$ of $K$ such that any $K$-valued point of $X$ lifts to an $L$-valued point of $U$. But the number of $L$-valued points of $U$ are known to be finite. 
\end{remark}

\section{$p$-adic measure on Artin stacks}
\label{p-adic-measure-on-stacks}

We recall the notation from section 1 that $R$ is a complete discrete valuation ring with a finite residue field $k$ of cardinality $q = p^r$, $\omega$ is a uniformizing parameter in $R$ and $R_n:= R/< \omega^{n+1}>$ for each $n \geq 0$. In this section, we examine the numbers $\#X(R_n)$ for $n \geq 0$. We also define a $p$-adic measure on $\pi_0(X(R))$. For this, we view $\pi_0(X(R))$ as a locally compact topological space by the quotient topology given by the map $U(R) \rightarrow X(R)$ where $U \rightarrow X$ is an f-surjective smooth atlas with $U$ being an affine scheme. It is easily seen that this topology is independent of the choice of the atlas $U \rightarrow R$ since any two such atlases $U_1 \rightarrow X$ and $U_2 \rightarrow X$ have a common refinement (for example, choose a f-surjective smooth covering map $U_3 \rightarrow U_1 \times^h_X U_2$). The $p$-adic measure will be a Borel measure on this space. 

For an stf-Artin stack $X$ over $k$, we note that the counting formula (\ref{toen-counting}) defines a measure $\pi_0(X(k))$. For any subset $A \subset \pi_0(X(k))$, we denote this measure by $\#A$. To be precise, we write
\begin{equation}
\label{counting-measure}
\#A:= \sum_{x \in A} \prod_{i>0} |\pi_i(X(k),x)|^{{(-1)}^i} \text{.}
\end{equation}

\begin{lemma}
\label{counting-by-fibres}
Let $p: F \rightarrow G$ be a morphism of sft-Artin stacks over $k$. Let $y \in \pi_0(G(k))$. Let $F_y = F \times_G^h \Spec(k)$. Then 
\[
\#p^{-1}(y) = (\#F_y(k)) \cdot (\#\{y\}) 
\]
where $p^{-1}(y) = \{ x \in \pi_0(F(k))| p(x)=y\}$. 
\end{lemma}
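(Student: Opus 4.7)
The approach is to exploit the long exact sequence of homotopy groups associated to the homotopy fibre sequence $F_y(k) \to F(k) \to G(k)$. Fix a representative $\tilde y : \Spec(k) \to G$ of $y$; then $F_y(k)$ is the homotopy fibre of $p(k)$ over $\tilde y$. For any $z \in \pi_0(F_y(k))$ with image $x \in p^{-1}(y)$, one has the usual sequence
\[
\cdots \to \pi_{i+1}(G(k),y) \stackrel{\partial_{i+1}}{\to} \pi_i(F_y(k),z) \stackrel{\alpha_i}{\to} \pi_i(F(k),x) \stackrel{\beta_i}{\to} \pi_i(G(k),y) \stackrel{\partial_i}{\to} \pi_{i-1}(F_y(k),z) \to \cdots
\]
ending with the sequence of pointed sets $\pi_0(F_y(k)) \to \pi_0(F(k)) \to \pi_0(G(k))$. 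Since $F$, $G$, and the homotopy pullback $F_y$ are sft-Artin stacks over the finite field $k$, all the homotopy groups appearing are finite by Toen (Prop.~3.5).

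The first step is structural. By exactness of the tail of the sequence, $\pi_0(F_y(k)) \to p^{-1}(y)$ is surjective, its fibres are exactly the orbits of the action of $\pi_1(G(k),y)$ on $\pi_0(F_y(k))$, and the stabiliser of $z$ is $\mathrm{Im}(\beta_1)$. Orbit–stabiliser then gives $|O_z| = |\pi_1(G(k),y)|/|\mathrm{Im}(\beta_1)|$. Moreover, the $\pi_1(G(k),y)$-action on $\pi_0$ is the $\pi_0$-shadow of a lifted-path action of $\Omega_y G(k)$ on $F_y(k)$ itself, so any two components of $F_y(k)$ in the same orbit are related by a self-homotopy-equivalence of $F_y(k)$; in particular the alternating product $\prod_{i\geq 1}|\pi_i(F_y(k),z)|^{(-1)^i}$ depends only on the orbit.

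The main computational input is the pointwise identity
\[
\prod_{i\geq 1}|\pi_i(F(k),x)|^{(-1)^i} = |O_z|\cdot \prod_{i\geq 1}|\pi_i(F_y(k),z)|^{(-1)^i}\cdot \prod_{i\geq 1}|\pi_i(G(k),y)|^{(-1)^i}.
\]
This is the familiar ``alternating product along an exact sequence equals one'' trick: write $|\pi_i(F(k),x)| = |\mathrm{Ker}(\beta_i)|\cdot|\mathrm{Im}(\beta_i)|$, use exactness to rewrite each factor as a quotient of $|\pi_i(F_y(k),z)|$ or $|\pi_i(G(k),y)|$ by $|\mathrm{Im}(\partial_j)|$, and observe that all $|\mathrm{Im}(\partial_j)|$ for $j\geq 2$ cancel telescopically. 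The low-dimensional end of the sequence is only exact as pointed sets, and it is precisely there that the non-group defect $|O_z| = |\pi_1(G(k),y)|/|\mathrm{Im}(\beta_1)|$ appears as the surviving correction factor.

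The final step is to substitute this identity into the definition and reindex. Choosing for each $x \in p^{-1}(y)$ an element $z(x) \in O_x$ and using orbit-invariance of the $F_y$-alternating product,
\[
\#p^{-1}(y) = \sum_{x\in p^{-1}(y)} |O_{z(x)}|\cdot\prod_{i\geq 1}|\pi_i(F_y(k),z(x))|^{(-1)^i}\cdot\prod_{i\geq 1}|\pi_i(G(k),y)|^{(-1)^i},
\]
and the sum over $x$ weighted by $|O_{z(x)}|$ regroups as a sum over $z \in \pi_0(F_y(k))$, giving $\#F_y(k)$ with the $\pi_i(G(k),y)$-product $=\#\{y\}$ pulled out as a constant factor. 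The main obstacle is the bookkeeping of the alternating product along the non-abelian, pointed-set portion of the sequence; once the defect term $|O_z|$ is correctly identified, the remainder is a mechanical reindexing.
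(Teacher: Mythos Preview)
Your proof is correct and follows essentially the same route as the paper: both use the long exact sequence of the fibration $F_y(k)\to F(k)\to G(k)$ and the alternating-product identity, with the defect term coming from the tail of the sequence. Your orbit $O_z$ is exactly the paper's fibre $i_y^{-1}(x)$ of $\pi_0(F_y(k))\to p^{-1}(y)$, and your pointwise identity $\#\{x\}=|O_z|\cdot\#\{z\}\cdot\#\{y\}$ is the paper's $(\#\{x'\})\cdot(\#\{y\})=|i_y^{-1}(x)|^{-1}\cdot(\#\{x\})$; the only difference is that you justify orbit-invariance of $\#\{z\}$ via the $\Omega_yG(k)$-action, whereas the paper reads it off directly from the identity (the right-hand side being independent of the choice of $x'$).
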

\begin{proof}
Let $i_y: F_y \rightarrow F$ be projection morphism. the Let $x \in p^{-1}(y)$. Let $x^{\prime} \in F(k)$ such that $i_y \circ x^{\prime} \cong x$. Then by the long exact sequence of homotopy groups corresponding to the fibration sequence $F_y(k) \rightarrow F(k) \rightarrow G(k)$, we have 
\[
|i_y^{-1}(x)| \cdot \prod_{i=1}^{\infty} \left( \frac{|\pi_i(F_y(k),x^{\prime})| \cdot |\pi_i(G(k),y)|}{|\pi_i(F(k),x)|} \right)^{(-1)^i}  = 1 \text{.}
\]
Thus $(\#\{x^{\prime}\}) \cdot (\#\{y\}) = |i_y^{-1}(x)|^{-1} \cdot (\#\{x\})$. Summing up over all $x^{\prime} \in i_y^{-1}(x)$, we get
\[
(\# i_y^{-1}(x)) \cdot (\#\{y\}) = |i_y^{-1}(x)|^{-1} \cdot (\#\{x\}) \text{.}
\]
Summing up over all $x \in p^{-1}(y)$, we get 
\[
(\# F_y(k)) \cdot (\#\{y\}) = \#p^{-1}(y) \text{.}
\]
\end{proof}

\begin{proposition}
\label{counting-on-smooth-stacks}
Let $X$ be a smooth sft-Artin stack over $R$ with $\dim(X/R)=d$. Let $n \geq 0$ be an integer and let $x \in \pi_0(Gr_n(X)(k))$. Then $\#(\tau_n^{n+1})^{-1}(x) = q^d \cdot \#\{x\}$. 
\end{proposition}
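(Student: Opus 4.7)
The plan is to reduce, via a smooth atlas, to the classical Hensel-lifting count on a smooth affine scheme, by double-counting an auxiliary stack. By Lemma \ref{counting-by-fibres} applied to $\tau_n^{n+1}: Gr_{n+1}(X)\to Gr_n(X)$ at $x$,
\[
\#(\tau_n^{n+1})^{-1}(x) \;=\; \#F_x(k)\cdot \#\{x\},
\qquad F_x := Gr_{n+1}(X)\times^h_{Gr_n(X),\,x}\Spec k,
\]
so the task reduces to showing $\#F_x(k) = q^d$.

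To that end, invoke Corollary \ref{visible-atlas} to choose a smooth atlas $\pi: U\to X$ with $U$ affine such that $U(R_m)\twoheadrightarrow \pi_0(X(R_m))$ for $m=n, n+1$; let $e$ be the relative dimension of $\pi$, so $U$ is a smooth affine $R$-scheme of dimension $d+e$. Set $\tilde F := Gr_{n+1}(U)\times^h_{Gr_n(X),\,x}\Spec k$. Factoring the composite $Gr_{n+1}(U)\to Gr_n(X)$ through either middle vertex of the commuting square
\[
\xymatrix{
Gr_{n+1}(U) \ar[r] \ar[d] & Gr_n(U) \ar[d] \\
Gr_{n+1}(X) \ar[r] & Gr_n(X),
}
\]
and using that $Gr$ preserves homotopy fibre products, we obtain two canonical identifications
\[
\tilde F \;\simeq\; Gr_{n+1}(U)\times^h_{Gr_n(U)} H
\;\simeq\; Gr_{n+1}(U)\times^h_{Gr_{n+1}(X)} F_x,
\]
where $H := Gr_n(U_x)$ and $U_x := U\times^h_{X,\,x}\Spec R_n$ is a smooth Artin $(n-1)$-stack of relative dimension $e$ over $R_n$.

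Now double-count $\#\tilde F(k)$. From the left description, $\tilde F\to H$ is the pullback of the truncation morphism $Gr_{n+1}(U)\to Gr_n(U)$ for the smooth $R$-scheme $U$ of dimension $d+e$; classical Hensel lifting (applicable since $\Spec R_n\hookrightarrow \Spec R_{n+1}$ is a square-zero thickening, as $\omega^{2(n+1)}=0$ in $R_{n+1}$ when $n\ge 0$) shows that every fibre of $U(R_{n+1})\to U(R_n)$ has $q^{d+e}$ elements, whence $\#\tilde F(k) = q^{d+e}\cdot \#H(k)$. From the right description, Lemma \ref{counting-by-fibres} applied to the smooth epimorphism $\tilde F\to F_x$ yields $\#\tilde F(k) = \#F_x(k)\cdot c$, where $c$ is the common value $\#Gr_{n+1}(U_y)(k)$ for $y\in\pi_0(F_x(k))$ and $U_y := U\times^h_{X,\,y}\Spec R_{n+1}$. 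Applying the present proposition inductively to the smooth Artin $(n-1)$-stack $U_y$ and summing the resulting fibre counts over $\pi_0(Gr_n(U_y)(k))$ gives $\#Gr_{n+1}(U_y)(k) = q^e\cdot \#Gr_n(U_y)(k)$; since $Gr_n(U_y) = Gr_n(U_x) = H$, one obtains $c = q^e\cdot \#H(k)$. Combining the two expressions yields $\#F_x(k) = q^d$ as required.

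The main obstacle is organizing the induction on stack level: one proves the statement simultaneously for smooth sft-Artin stacks over each truncation $R_m$, by induction on the stack level of $X$, with the base case ($X$ a scheme) supplied by the classical Hensel count used above and the inductive step given by the argument just described. The passage from stacks over $R$ to stacks over $R_m$ causes no essential difficulty, since the entire argument only references truncations of the base ring.
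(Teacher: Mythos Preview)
Your argument is correct and follows essentially the same strategy as the paper: induction on the geometric level of $X$, with the affine base case supplied by Hensel's lemma and the inductive step carried out via an f-surjective smooth atlas together with Lemma~\ref{counting-by-fibres}. One organizational difference worth noting: rather than extending the induction to smooth stacks over each $R_m$, the paper lifts each $x' \in (\tau_n^{n+1})^{-1}(x)$ all the way to an $R$-point $\tilde{x}$ (possible since $X$ is smooth over $R$), so that the fibre $F = U \times^h_{X,\tilde{x}} \Spec R$ is already defined over $R$ and the induction hypothesis applies as stated; this slightly streamlines the bookkeeping you defer to your final paragraph. (Also, watch the notational clash: you use $n$ both for the truncation level, as in the statement, and for the stack level when you write ``Artin $(n-1)$-stack''.)
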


\begin{proof}
Suppose $X$ be an sft-Artin stack which is $m$-geometric. We prove the result by induction on $n$. When $m = -1$, i.e. when $X$ is an affine scheme, the result just a consequence of Hensel's lemma. Suppose the result has been proved for $n \leq m$. We prove the induction hypothesis for $n = m$. 

Let $f: U \rightarrow X$ be a f-surjective smooth atlas such that $U$ is an affine scheme with $\dim(U/R)=e$. Let $x^{\prime} \in (\tau_n^{n+1})^{-1}(x)$ and let $\~x$ be an element of $(\tau_{n+1})^{-1}(x^{\prime})$ (it is easy to see that $\~x$ exists because $X$ is smooth). Let $F = U \times^h_{X,\~x} \Spec(R)$. Then $F$ is a smooth sft-Artin stack over $R$ which is $(m-1)$-geometric. 

We have 
\[
|Gr_n(f)^{-1}(x)| = (\#\pi_0(Gr_n(F)(k)))\cdot(\#\{x\}) 
\]
and, similarly,
\[
|Gr_{n+1}(f)^{-1}(x^{\prime})| = (\#\pi_0(Gr_{n+1}(F)(k)))\cdot(\#\{x^{\prime}\}) \text{.}
\]
Then by the induction hypothesis, we have \[
\#\pi_0(Gr_{n+1}(F)(k)) = q^{(e-d)} \cdot \#\pi_0(Gr_n(F)(k)) \text{.}
\]
Thus we have 
\[
|Gr_{n+1}(f)^{-1}(x^{\prime})|  = q^{(e-d)}\cdot |Gr_n(p)^{-1}(x)| \frac{\#\{x^{\prime}\}}{\#\{x\}} \text{.} 
\]
Letting $x^{\prime}$ vary over the set $(\tau_n^{n+1})^{-1}(x)$ and summing up, we get 
\[
|Gr_{n+1}(f)^{-1}((\tau_n^{n+1})^{-1}(x))| = q^{(e-d)}\cdot |Gr_n(f)^{-1}(x)| \frac{\#(\tau_n^{n+1})^{-1}(x)}{\#\{x\}} \text{.}
\]
But 
\[
Gr_{n+1}(f)^{-1}((\tau_n^{n+1})^{-1}(x)) = (\tau_n^{n+1})^{-1}(Gr_n(f)^{-1}(x)) \text{.}
\]
Thus, applying the induction hypothesis for $n=-1$, we have 
\[
|Gr_{n+1}(f)^{-1}((\tau_n^{n+1})^{-1}(x))|  = q^e \cdot |Gr_n(f)^{-1}(x)|
\]
which completes the proof of the induction hypothesis for $n=m$. 
\end{proof}

Now let $X$ be an arbitrary sft-Artin stack over $R$ with $\dim(X/R)=d$. Let $f: U \rightarrow X$ be an f-surjective smooth atlas where $U$ is an affine scheme with $\dim(U/R) = e$. For any $s \in \pi_0(X(k))$, let $\-U_s := U \times^h_{X,s} \Spec(k)$ and let $m_s:= \# U_s(k)$. Note that $m_s \neq 0$ for all $s$. 

Let $A$ be any subset of $\pi_0(Gr_n(X)(k))$. Let $A_s$ denote the set $A \cap (\tau^n_0)^{-1}(s)$ so that $A = \coprod_{s \in S} A_s$. By Lemma \ref{counting-by-fibres} and Prop. \ref{counting-on-smooth-stacks}, we have
\[
|Gr_n(f)^{-1}(A_s)| = q^{n(e-d)} \cdot m_s \cdot \#A_s \text{.}
\]
Thus 
\begin{eqnarray*}
q^{-nd} \cdot \#A & = & q^{-nd} \cdot \left( \sum_{s \in \pi_0(X(k))} \#A_s \right) \\
                            & = & q^{-nd} \cdot \left( \sum_{s \in \pi_0(X(k))} \frac{|Gr_n(f)^{-1}(A_s)|}{m_s \cdot q^{-n(e-d)}}\right) \\
                            & = & \sum_{s \in \pi_0(X(k))} \left(\frac{1}{m_s} \right) \cdot \left(\frac{|Gr_n(f)^{-1}(A_s)|}{q^{-ne}} \right)
\end{eqnarray*}
In particular, if $A \subset \pi_0(X(R))$, then
\begin{equation}
\label{p-adic-measure-stacks-equation}
q^{-nd} \cdot \#(\tau_n(A)) = \sum_{s \in \pi_0(X(k))} \left(\frac{1}{m_s} \right) \cdot \left(\frac{|Gr_n(f)^{-1}(\tau_n(A)_s)|}{q^{-ne}} \right)
\end{equation}
This leads us to define $p$-adic measure as follows:
\begin{definition} 
\label{p-adic-measure-definition}
With the above notation, note that a set $A \subset \pi_0(X(R))$ is a Borel subset if and only if $f^{-1}(A) \subset U(R)$ is a Borel subset. We define the $p$-adic measure of such a set by
\[
\mu^f_d(A):= \sum_{s \ in S} \left(\frac{1}{m_s} \right) \cdot \mu_e(f^{-1}(A_s)) \text{.}
\]
\end{definition}

\begin{lemma}
With the above notation, $\mu^f_d$ is independent of the choice of $f$. 
\end{lemma}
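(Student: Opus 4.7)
The plan is to identify $\mu^f_d(A)$ with a quantity that manifestly does not involve $f$. The crucial ingredient is already sitting on the page: equation (\ref{p-adic-measure-stacks-equation}), derived in the discussion just above Definition \ref{p-adic-measure-definition}, which for each $n \geq 0$ gives
\[
q^{-nd} \cdot \#(\tau_n(A)) \;=\; \sum_{s \in \pi_0(X(k))} \frac{1}{m_s} \cdot q^{-ne} \cdot |Gr_n(f)^{-1}(\tau_n(A)_s)|.
\]
The left-hand side of this identity depends only on the stack $X$ and the subset $A$; the whole proof consists of showing that the right-hand side converges, as $n \to \infty$, to $\mu^f_d(A)$.

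First I would invoke the classical theorem (Oesterl\'e, Serre) that for any smooth affine $R$-scheme $U$ of pure relative dimension $e$ and any reasonable (clopen, sub-analytic, or more generally definable) subset $B \subset U(R)$, one has $\mu_e(B) = \lim_n q^{-ne}|\tau_n(B)|$. Applied to $B = f^{-1}(A_s) \subset U(R)$ this would yield $q^{-ne}|\tau_n(f^{-1}(A_s))| \to \mu_e(f^{-1}(A_s))$. Summing over the finite set $\pi_0(X(k))$ (finite by \cite{To}, Prop.~3.5) and combining with the displayed identity would then give
\[
\mu^f_d(A) \;=\; \sum_s \frac{1}{m_s}\,\mu_e(f^{-1}(A_s)) \;=\; \lim_{n \to \infty} q^{-nd}\cdot \#(\tau_n(A)),
\]
and the right-hand side is patently independent of the atlas $f$.

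The main technical hurdle lies in reconciling $\tau_n(f^{-1}(A_s))$ with $Gr_n(f)^{-1}(\tau_n(A)_s)$; these sets are not equal in general, since an $R_n$-point $u \in U(R_n)$ with $f(u) \in \tau_n(A)$ need not be the reduction of an $R$-lift $\tilde{u}$ whose image $f(\tilde{u})$ already lies in $A$, only in some class with the same mod-$\omega^{n+1}$ reduction. For $A$ a finite disjoint union of clopen ``cylinders'' of the form $\tau_n^{-1}(E)$ the two sets coincide and the argument is immediate; for a general Borel $A$ one reduces to this case by approximating $A$ from inside and outside by unions of cylinders, using outer/inner regularity of the Borel measure and the fact that $\mu^f_d$ inherits countable additivity from $\mu_e$. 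I would therefore prove the lemma first for cylinders (where no approximation is needed and the identity is exact at each finite level $n$), then extend by the uniqueness of Carath\'eodory extension.

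As a safety net in case the approximation argument turns delicate, one has a more hands-on alternative: given two atlases $f_i : U_i \to X$, form $W = U_1 \times^h_X U_2$ and choose by Lemma~\ref{f-surjective-atlas} an f-surjective smooth affine atlas $V \to W$; the two smooth projections $V \to U_i$ reduce the problem, by transitivity, to comparing $\mu^f_d$ and $\mu^{f \circ g}_d$ when $g : V \to U$ is a smooth morphism of affine $R$-schemes. That comparison in turn follows from the $p$-adic fibre integration formula for the smooth map $g$, together with Lemma~\ref{counting-by-fibres} applied to the map $\bar{V}_s \to \bar{U}_s$ to match the counting factors $m^V_s$ and $m^U_s$.
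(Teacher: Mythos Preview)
Your primary approach is exactly the paper's: use equation~(\ref{p-adic-measure-stacks-equation}) to identify $\mu^f_d(A)$ with $\lim_n q^{-nd}\#\tau_n(A)$, check this limit exists for open sets (your cylinders), and then extend to all Borel sets by outer regularity of the $p$-adic measure on $U(R)$. You are in fact more careful than the paper on one point: the paper does not explicitly address the discrepancy between $\tau_n(f^{-1}(A_s))$ and $Gr_n(f)^{-1}(\tau_n(A)_s)$, whereas you correctly isolate this and resolve it on cylinders before approximating.

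Your ``safety net'' via a common refinement $V \to U_1 \times^h_X U_2$ and $p$-adic fibre integration is a genuinely different route that the paper does not take for this lemma (the paper only invokes common refinements earlier, to show the \emph{topology} on $\pi_0(X(R))$ is atlas-independent). That alternative has the virtue of avoiding any limiting argument and working directly with the measures, at the cost of needing the smooth fibre integration formula and a compatibility check between the constants $m_s$ for the two atlases; the paper's limit argument is shorter but relies on the convergence theorem for $p$-adic measure on affine schemes.
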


\begin{proof}
This follows almost from the definition. Indeed, the limit of the left-hand side of equation \ref{p-adic-measure-on-stacks} as $n \rightarrow \infty$, if it exists, clearly does not depend on $f$. This limit exists if the limit of the right-hand side of equation (\ref{p-adic-measure-stacks-equation}) exists as $n \rightarrow \infty$. This is clearly so if $A$ is an open subset (for the topology described at the beginning of this section). If $A$ is a Borel subset, so is $f^{-1}(A)$. Since Borel subsets are outer regular for the $p$-adic measure on $U(R)$, we see that $\mu^f_d$ is independent of $f$ on an arbitary Borel subset of $\pi_0(X(R))$.  
\end{proof}

The arguments above have given us the following result which we restate explicitly. 

\begin{proposition}
Let $X$ be an sft-Artin stack over $R$ with $\dim(X/R)=d$. Then the sequences $\{p^{-nd} \#\pi_0(X(R_n))\}_{n=0}^{\infty}$ and $\{p^{-nd} \#\tau_n(\pi_0(X(R)))\}_{n=0}^{\infty}$ both converge to $\mu_d(\pi_0(X(R)))$. 
\end{proposition}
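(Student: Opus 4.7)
The plan is to derive both sequences from the identity (\ref{p-adic-measure-stacks-equation}) by choosing $A$ appropriately, and then reduce to the classical scheme-level convergence $\mu_e(B) = \lim_n q^{-ne} |\tau_n(B)|$ for Borel subsets $B$ of $U(R)$ (the result cited in the introduction from \cite{Os}). First, use Lemma \ref{visible-atlas} to fix an f-surjective smooth atlas $f: U \rightarrow X$ with $U$ an affine scheme of pure relative dimension $e$. The finiteness of $\pi_0(X(k))$ (\cite{To}, Prop. 3.5) is what will allow the limit to be exchanged with the outer sum over $s$.

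For the second sequence, specialize (\ref{p-adic-measure-stacks-equation}) to $A = \pi_0(X(R))$:
\[
q^{-nd} \cdot \# \tau_n(\pi_0(X(R))) = \sum_{s \in \pi_0(X(k))} \frac{1}{m_s} \cdot q^{-ne} \cdot |Gr_n(f)^{-1}(\tau_n(\pi_0(X(R)))_s)|.
\]
The key step is the set-theoretic identification $Gr_n(f)^{-1}(\tau_n(\pi_0(X(R)))_s) = \tau_n(f^{-1}(\pi_0(X(R))_s))$. The inclusion $\supseteq$ is immediate; for $\subseteq$, given $u \in U(R_n)$ with $Gr_n(f)(u) = \tau_n(a)$ for some $a \in X(R)$, the element $u$ defines an $R_n$-valued point of the homotopy fiber $U \times^h_{X,a} \Spec(R)$, which is smooth over $R$ since $f$ is smooth. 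Hensel's lemma for smooth morphisms then lifts $u$ to an $R$-point, producing the required element of $f^{-1}(\pi_0(X(R))_s)$. Applying the scheme-level convergence to $B_s := f^{-1}(\pi_0(X(R))_s)$ and summing yields $\lim_n q^{-nd}\#\tau_n(\pi_0(X(R))) = \sum_s \frac{1}{m_s} \mu_e(B_s)$, which is precisely $\mu_d(\pi_0(X(R)))$ by Definition \ref{p-adic-measure-definition}.

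For the first sequence, apply the same identity to $A = \pi_0(Gr_n(X)(k))$ itself; recall that (\ref{p-adic-measure-stacks-equation}) was in fact derived for an arbitrary subset of $\pi_0(Gr_n(X)(k))$, not only for images of $\tau_n$. Then $A_s = (\tau_0^n)^{-1}(s)$ and $Gr_n(f)^{-1}(A_s)$ is the set of all $u \in U(R_n)$ whose reduction modulo $\omega$ lies in the finite set $U_s(k) \subset U(k)$. This decomposes as a disjoint union of ``tubes'' over the points of $U_s(k)$, and applying the scheme-level asymptotic tube by tube shows that $q^{-ne} |Gr_n(f)^{-1}(A_s)|$ converges to $\mu_e(B_s)$ as well, so the full sum again converges to $\mu_d(\pi_0(X(R)))$.

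The main obstacle is the Hensel-type identification used in the second sequence: one must lift an $R_n$-point $u$ of $U$ to an $R$-point, knowing only that its image in $X$ lifts. This is where the smoothness of $f$ enters crucially, via the smoothness of the homotopy fiber $U \times^h_{X,a} \Spec(R)$ over $R$. The remaining work is combinatorial---rearranging (\ref{p-adic-measure-stacks-equation}) and invoking the classical asymptotic term by term---and the interchange of limit and sum is justified by the finiteness of $\pi_0(X(k))$.
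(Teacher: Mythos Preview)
Your proof is correct and follows exactly the route the paper intends: the paper gives no separate proof of this proposition, merely stating that ``the arguments above have given us the following result which we restate explicitly,'' so what you have done is supply the details that the paper leaves implicit in equation~(\ref{p-adic-measure-stacks-equation}) and Definition~\ref{p-adic-measure-definition}. Your Hensel-lifting identification $Gr_n(f)^{-1}(\tau_n(\pi_0(X(R)))_s) = \tau_n(B_s)$ and the tube decomposition for the first sequence are precisely the missing steps; the paper simply takes them for granted.
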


Finally we look at the power series $\~P_X(T)$ and $P_X(T)$.

\begin{proposition}
Let $X$ be a sft-Artin stack over $R$. Then the power series $P_X(T)$ and $\~P_X(T)$ are rational functions of $T$. 
\end{proposition}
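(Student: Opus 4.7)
The plan is to reduce the rationality to Denef's classical theorem (\cite{De}) for definable subsets of an affine scheme of finite type over $R$, using an f-surjective smooth atlas. First I would invoke Corollary \ref{visible-atlas} to fix an f-surjective smooth atlas $f : U \to X$ with $U$ affine and of relative dimension $e$ over $R$. By Toën's finiteness result (\cite{To}, Prop.~3.5) the set $S = \pi_0(X(k))$ is finite; list its elements as $s_1, \ldots, s_N$ with associated multiplicities $m_{s_j}$ defined as in the paragraph preceding Definition \ref{p-adic-measure-definition}.

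The key reduction is the formula derived in this section (of which equation (\ref{p-adic-measure-stacks-equation}) is the special case $A \subset \pi_0(X(R))$): for any $A \subset \pi_0(Gr_n(X)(k))$,
\[
\#A \;=\; q^{n(d-e)} \sum_{j=1}^{N} \frac{|Gr_n(f)^{-1}(A_{s_j})|}{m_{s_j}}.
\]
Taking $A = \pi_0(Gr_n(X)(k))$ recovers $\#X(R_n)$, while taking $A = \tau_n(\pi_0(X(R)))$ recovers $\#\tau_n(\pi_0(X(R)))$. Since the sum is finite and the prefactor $q^{n(d-e)}$ amounts to the substitution $T \mapsto q^{d-e} T$ (which preserves rationality), it suffices to prove that each of the series $\sum_{n \geq 0} |Gr_n(f)^{-1}(A_{s_j})|\, T^n$ is a rational function of $T$.

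Next I would identify each set $Gr_n(f)^{-1}(A_{s_j}) \subset U(R_n)$ with the truncation to level $n$ of a definable subset of $U(R)$. For $\~P_X(T)$, a direct check using the compatibility of $Gr_n(f)$ with reduction mod $\omega$ gives $Gr_n(f)^{-1}(\pi_0(X(R_n))_{s_j}) = \{\bar u \in U(R_n) : \bar u \bmod \omega \in U(k)_{s_j}\}$, where $U(k)_{s_j} = \{u_0 \in U(k) : [f(u_0)] = s_j\}$ is a finite set; this is the $n$-th truncation of the clopen union of residue disks $\{u \in U(R) : u \bmod \omega \in U(k)_{s_j}\}$. For $P_X(T)$, using the surjectivity of $U(R) \to \pi_0(X(R))$ from Corollary \ref{visible-atlas}, I would identify the corresponding set with $\tau_n(\{u \in U(R) : [f(u \bmod \omega)] = s_j\})$. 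Both are definable on the affine scheme $U$ of finite type over $R$, and their Poincaré-type series are rational by Denef's classical theorem.

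The step I expect to be the main obstacle is the identification for $P_X(T)$: a priori one only has the inclusion $\tau_n(f^{-1}(\pi_0(X(R))_{s_j})) \subseteq Gr_n(f)^{-1}(\tau_n(\pi_0(X(R)))_{s_j})$, and the reverse direction requires controlling the stacky fibres of $Gr_n(f)$ over the image of $\tau_n$. To bridge this I would reuse the fibre-counting input from Lemma \ref{counting-by-fibres} and Proposition \ref{counting-on-smooth-stacks} that already underlies the derivation of (\ref{p-adic-measure-stacks-equation}); after dividing by the multiplicity $m_{s_j}$, the total count reduces to that of a truncation of a definable subset of $U(R)$, at which point Denef's theorem delivers the desired rationality.
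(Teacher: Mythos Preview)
Your proposal is correct and follows essentially the same route as the paper: decompose over the finite set $\pi_0(X(k))$, use the identity preceding equation~(\ref{p-adic-measure-stacks-equation}) to convert $\#A$ into a sum of honest point-counts on the affine atlas $U$, and then invoke Denef's rationality theorem. The paper's proof is terser and simply cites \cite{De}, Thm.~4.1 without spelling out the definability of the relevant subsets of $U(R_n)$, but the mechanism is identical.

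One remark on the ``obstacle'' you flag for $P_X(T)$: the reverse inclusion you worry about actually holds on the nose, and the argument is more direct than going through the fibre-counting of Lemma~\ref{counting-by-fibres} and Proposition~\ref{counting-on-smooth-stacks}. If $\bar u \in Gr_n(f)^{-1}(\tau_n(\pi_0(X(R)))_{s_j})$, pick $\tilde x \in X(R)$ with $\tau_n(\tilde x) \cong Gr_n(f)(\bar u)$; then $\bar u$ together with this homotopy is an $R_n$-point of the fibre $U_{\tilde x} = U \times^h_{X,\tilde x} \Spec(R)$, which is \emph{smooth} over $R$ since $f$ is smooth. Hensel's lemma (applied to an affine smooth chart of $U_{\tilde x}$) lifts this to an $R$-point $u$ with $\tau_n(u) = \bar u$, giving $\bar u \in \tau_n(U(R)_{s_j})$. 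So the set in question is exactly $\tau_n$ of a definable subset of $U(R)$, and Denef applies cleanly.
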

\begin{proof}
With the notation as above, writing $A_n:= \pi_0(X(R_n)) = \pi_0(Gr_n(X)(k))$, we have the equalities
\begin{eqnarray*}
\~P_X(T) & = & \sum_{s \in \pi_0(X(k))} \sum_{n=0}^{\infty} \#(A_n)_s T^n \\
       & = & \sum_{s \in \pi_0(X(k))} \frac{1}{m_s \cdot p^{n(e-d)}}\sum_{n=0}^{\infty} |Gr_n(f)^{-1}((A_n)_s)| T^n \text{.}
\end{eqnarray*}
Now it follows from (\cite{De}, Thm. 4.1) that $\~P_X(T)$ is a rational function of $T$. The proof for $P_X(T)$ is similar. 
\end{proof}

While the above argument is adequate to establish the rationality of $P_X(T)$, in order to prove a ``uniform rationality" theorem, it is useful to view the power series $P_X(T)$ a little differently. We recall the definition of the singular locus of a stack. 

\begin{definition}
\label{singular-locus}
Let $X$ be an sft-Artin stack over an affine scheme $S$. Then the singular locus $X_{sing}$ is a closed substack of $X$ defined as follows: 
\begin{itemize}
\item[(1)] If $X$ is an affine scheme of dimension $d$ over $S$, then $X_{sing}$ is the closed subscheme of $X$ defined by the $d$-th Fitting ideal of $\Omega_{X/S}$ (the module of relative differentials of $X$ over $S$. 
\item[(2)] In general, let $f: U \rightarrow X$ be a smooth atlas with $U$ being an affine scheme of finite type over $S$. Then $X_{sing}$ is the closed substack of $X$ which is the image of $U_{sing} \rightarrow X$. 
\end{itemize}
\end{definition}
In (2) above, it is easy to check that the image stack of $U_{sing} \rightarrow X$ is a closed substack of $X$ and that the definition of $X_{sing}$ does not depend on the choice of the atlas $f$. 

\begin{definition}
\label{Q_X-definition}
Let $X$ be an sft-Artin stack over $R$ and let $X_{sing}$ denote its singular locus over $R$. Then the power series $Q_X(T)$ is defined as 
\[
Q_X(T):= P_X(T) - P_{X_{sing}}(T) \text{.}
\]
\end{definition}

The reason why the power series $Q_X(T)$ is useful is that the problem of proving the rationality of $P_X(T)$ is equivalent to that of proving the rationality of $Q_X(T)$. Indeed, suppose we know that the power series $Q_X(T)$ is a rational function for any sft-Artin stack over $R$. Then we can prove the rationality of $P_X(T)$ by noetherian induction on the closed substacks of $X$. Indeed, if $P_{X_{sing}}(T)$ and $Q_X(T)$ are both known to be rational functions, it follows that $P_X(T)$ is a rational function. The advantage here is that the coefficients of $Q_X(T)$ have a simple description in terms of $p$-adic measure. 

\begin{lemma}
\label{Q_X}
Let $X$ be an sft-Artin stack over $R$ with $\dim(X/R) = d$. Then $Q_X(T) = \sum_{n=0}^{\infty} q^{nd}\mu_d(M_n)T^n$ where $M_n$ is the subset of $\pi_0(X(R))$ given by
\[
M_n = \pi_0(X(R)) \backslash \tau_{n,X}^{-1}(X_{sing}(R_n))
\]
\end{lemma}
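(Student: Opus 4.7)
The plan is to compute each coefficient of $Q_X(T)$ in two steps: first, to express $q^{nd}\mu_d(M_n)$ as a single explicit count $\#\tau_{n,X}(M_n)$ using the fact that $M_n$ lives in the smooth locus at level $n$; and second, to identify this count with $\#\tau_{n,X}(\pi_0(X(R))) - \#\tau_{n,X_{sing}}(\pi_0(X_{sing}(R)))$ via the natural decomposition of the image of $\tau_{n,X}$.

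First I would observe that $M_n$ is saturated with respect to the truncation map. Indeed, $M_n = \tau_{n,X}^{-1}(A_n)$ where $A_n := \pi_0(Gr_n(X)(k))\setminus X_{sing}(R_n)$, so $M_n = \tau_{n,X}^{-1}(\tau_{n,X}(M_n))$. Moreover, since $\Spec R_n$ is a local scheme with a single topological point, every $R_n$-point of $X$ factors through either the open substack $X^o := X \setminus X_{sing}$ or through $X_{sing}$; this yields a disjoint-union decomposition $\pi_0(Gr_n(X)(k)) = \pi_0(Gr_n(X^o)(k)) \sqcup X_{sing}(R_n)$, and hence $\tau_{n,X}(M_n) \subset \pi_0(Gr_n(X^o)(k))$. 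In other words, every point of $\tau_{n,X}(M_n)$ is an $R_n$-point of the smooth stack $X^o$ of relative dimension $d$.

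Next I would apply Proposition~\ref{counting-on-smooth-stacks} iteratively. For any $x \in \tau_{n,X}(M_n)$, every lift of $x$ to an $R_{n+1}$-point of $X$ still lies in the smooth locus (since a lift of an $X^o$-valued point is itself $X^o$-valued), so $(\tau_n^{n+1})^{-1}(x) \subset \pi_0(Gr_{n+1}(X^o)(k))$, and Proposition~\ref{counting-on-smooth-stacks} applied to $X^o$ gives $\#(\tau_n^{n+1})^{-1}(x) = q^d\cdot \#\{x\}$. Summing over $x \in \tau_{n,X}(M_n)$ and iterating in $n$ yields $\#\tau_{m,X}(M_n) = q^{(m-n)d}\cdot \#\tau_{n,X}(M_n)$ for every $m \geq n$. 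Dividing by $q^{md}$ and letting $m \to \infty$, the defining limit from Proposition~3.3 gives $\mu_d(M_n) = q^{-nd}\cdot \#\tau_{n,X}(M_n)$, so $q^{nd}\mu_d(M_n) = \#\tau_{n,X}(M_n)$.

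Finally I would identify this count with the $n$-th coefficient of $Q_X(T)$. Using the decomposition of $\pi_0(Gr_n(X)(k))$ above, one has the disjoint union $\tau_{n,X}(\pi_0(X(R))) = \tau_{n,X}(M_n) \sqcup \bigl(\tau_{n,X}(\pi_0(X(R))) \cap X_{sing}(R_n)\bigr)$, hence $\#\tau_{n,X}(M_n) = \#\tau_{n,X}(\pi_0(X(R))) - \#\bigl(\tau_{n,X}(\pi_0(X(R))) \cap X_{sing}(R_n)\bigr)$. The hard part — which is the main obstacle in the proof — is to match the second term with $\#\tau_{n,X_{sing}}(\pi_0(X_{sing}(R)))$, i.e., to show that any $R_n$-point of $X_{sing}$ which extends to an $R$-point of $X$ in fact extends to an $R$-point of $X_{sing}$. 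For this, one uses Corollary~\ref{visible-atlas}: pick a smooth f-surjective atlas $V \to X_{sing}$ by an affine scheme, lift the given $R_n$-point of $X_{sing}$ to $V(R_n)$, and use the completeness of $R$ together with smoothness of $V$ over $R$ (via Hensel's lemma applied to a suitable smooth cover of $V\times^h_{X_{sing}}\Spec R$) to extend the $V$-lift to an $R$-point, which projects to the required $R$-point of $X_{sing}$. Combining these three identifications gives $Q_X(T) = \sum_{n\geq 0}\#\tau_{n,X}(M_n)T^n = \sum_{n\geq 0}q^{nd}\mu_d(M_n)T^n$.
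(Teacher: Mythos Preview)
Your argument has a genuine gap in the very first step. The claimed decomposition
\[
\pi_0(Gr_n(X)(k)) = \pi_0(Gr_n(X^o)(k)) \sqcup X_{sing}(R_n)
\]
is false. It is true that an $R_n$-point of $X$ whose closed point lands in $X^o$ factors through the open substack $X^o$; but an $R_n$-point whose closed point lands in $X_{\mathrm{sing}}$ need \emph{not} factor through the closed substack $X_{\mathrm{sing}}$. For a concrete example take $X=\Spec R[x,y]/(xy)$, so $X_{\mathrm{sing}}=V(x,y)$; the $R_1$-point $(\omega,0)$ has closed point $(0,0)\in X_{\mathrm{sing}}$, yet $x=\omega\neq 0$ in $R_1$, so it is not an $R_1$-point of $X_{\mathrm{sing}}$, and it does not factor through $X^o$ either. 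Consequently your inclusion $\tau_{n,X}(M_n)\subset \pi_0(Gr_n(X^o)(k))$ fails, and with it the application of Proposition~\ref{counting-on-smooth-stacks} to $X^o$ that drives your Step~2.

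Your treatment of the ``hard part'' is also flawed. You choose a smooth atlas $V\to X_{\mathrm{sing}}$ and then invoke ``smoothness of $V$ over $R$'' and Hensel's lemma; but $V$ is smooth over $X_{\mathrm{sing}}$, not over $R$, and $X_{\mathrm{sing}}\to\Spec R$ is essentially never smooth. So there is no Hensel-type lifting of the $R_n$-point of $V$ to an $R$-point. Indeed the assertion you are trying to prove --- that an $R_n$-point of $X_{\mathrm{sing}}$ which extends to an $R$-point of $X$ must extend to an $R$-point of $X_{\mathrm{sing}}$ --- can fail already for schemes (take $X=\Spec R[x,y]/(xy-\omega^2)$, where $X_{\mathrm{sing}}=\Spec(R/\omega^2)$ has no $R$-points at all, while the $k$-point $(0,0)$ of $X_{\mathrm{sing}}$ lifts to the $R$-point $(\omega,\omega)$ of $X$).

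The paper avoids both of these problems: rather than passing to the smooth locus $X^o$ and invoking Proposition~\ref{counting-on-smooth-stacks}, it reduces at once to the case of an affine scheme via the atlas and equation~(\ref{p-adic-measure-stacks-equation}), and in the affine case invokes the classical count (Looijenga, Lemma~9.1) that for $x\in S_n$ one has exactly $q^d$ liftable preimages at level $n+1$. The identification $M_n=\tau_n^{-1}(S_n)$ that both you and the paper need is asserted in the paper without argument; your attempt to supply one does not succeed.
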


\begin{proof}
Let $S_n:= \tau_n(\pi_0(X(R))) \backslash \tau_n(\pi_0(X_{sing}(R))) \subset \pi_0(X(R_n))$ so that the coefficient of $T^n$ in $Q_X(T)$ is $\#S_n$. We wish to prove $\mu_d(M_n) = \#S_n/q^{nd}$. Note that $M_n = \tau_n^{-1}(S_n)$. 

First suppose that $X$ is an affine scheme. Then it is known that for any $n$, $x \in S_n$ implies that $\#[(\tau_n^{n+1})^{-1}(x) \cap \tau_n(\pi_0(X(R)))]=q^d$ (for example, see the argument in \cite{Lo}, Lemma 9.1).  

The case of a general sft-Artin stack follows from equation (\ref{p-adic-measure-stacks-equation}) and our definition of $p$-adic measure. 
\end{proof}

\section{Motivic measure on stacks}
\label{motivic-measure-on-stacks}

Let $k$ be a field of characteristic zero and let $\Field_k$ denote the category of field extensions of $k$. We will now associate a ``motivic measure" to definable subassignments on Artin stacks over $k[[t]]$. In doing so, we will assume familiarity with the theory of motivic integration as presented in \cite{CL}.  For the sake of completeness, we recall some of the definitions and notation from that work. We are essentially reproducing the summary from (\cite{CL2}, Section 2) while incorporating the changes that are necessary for our setting. 

For any field extension $K$ of $k$, we consider the power series ring $K[[t]]$. This is a discrete valuation ring with valuation $\ord: K[[t]]\backslash \{0\} \rightarrow \|Z$. Let $\ac: K[[t]] \rightarrow K$ be the ``angular component" map, i.e. 
\[
\ac(x)= \begin{cases}
xt^{-\ord(x)} \mod{t} & \text{if $x \neq 0$} \\
0 & \text{if $x=0$.}
\end{cases}
\]
In order to work with these power series rings, we use the language of Denef-Pas, which we denote by $\Ldp$. This is a $3$-sorted language
\[
\Ldp:= (\Lval, \Lres, \Lord, \ord, \ac )
\]
with three sorts, $\Val$, $\Res$ and $\Ord$ corresponding to the valuation ring, the residue field and the order group. $\Lval$ and $\Lres$ are equal to the language of rings $\{+, -, \cdot, 0, 1 \}$ while the language $\Lord$ is the Presburger language 
\[
\{ +, -, 0, 1, \leq \} \cup \{  \equiv_n | m \in \|N, n > 1\}
\]
wjere $\equiv_n$ is interpreted as congruence modulo $n$.  

Recall that if $\@C$ is a category $\@C$ and $F: \@C \rightarrow \Sets$ is a functor, a \textit{subassignment} $h$ of $F$ is a rule which assigns a subset $h(C) \subset F(C)$ for each object $C$ of $\@C$. For subassignments, the usual set-theoretic notions such as $\cup$, $\cap$, $\subset$, etc. are defined objectwise. In particular the subassignments of a fixed functor form a Boolean algebra. 

We use this notion with $\@C = \Field_k$. Consider a triple $(\@X, X, r)$ where $\@X$ is an Artin stack over $k[[t]]$, $X$ is an Artin stack over $k$ and $r \geq 0$ is an integer. Consider the functor be the functor
\[
(\@X \times X \times \|Z^r)(K):= \pi_0(\@X(K[[t]]) \times \pi_0(X(K)) \times \|Z^r  \text{.}
\]
When any of the elements in the triple $(\@X, X, r)$ are trivial (i.e. $\@X = \Spec(k[[t]])$, $X = \Spec(k)$ or $r=0$) we may abuse notation and simply omit to write them if there is no risk of confusion. When $\@X = \|A^n_{k[[t]]}$ and $X = \|A^m_k$, the above functor is denoted by $h[m,n,r]$. 

\begin{remark}
\label{remark-about-subassignments}
Note that we are considering $K[[t]]$-valued points when we define a subassignment while in (\cite{CL}, \cite{CL2}) one considers $K((t))$-valued points. However, when one is working with a separated scheme $X$, $X(K[[t]])$ maps injectively into $X(K((t)))$ and thus we may apply the results regarding motivic measure on such schemes without any problems. We will use these results only for affine schemes.  
\end{remark}

We will now define what it means for a subassignment of such a functor to be definable in the language of Denef-Pas. Given such a triple $(\@X,X,r)$, we choose f-surjective smooth atlases $\@U \rightarrow \@X$ and $U \rightarrow X$ such that $\@U$ and $U$ are affine schemes over $k[[t]]$ and $k$ respectively. Then consider the triple $(\@U, U, r)$. There is an obvious morphism of functors $f: \@U \times U \times \|Z^r \rightarrow \@X \times X \times \|Z^r$. We say that a subassignment of $\@X \times X \times \|Z^r$ is definable if and only if its preimage in $\@U \times U \times \|Z^r$ is definable by a formula in the language $\Ldp$ with $\Val$ coefficients in $k[[t]]$ and $\Res$ coefficients in $k$. It is easy to see that this notion is independent of the choice of $\@U \rightarrow \@X$ and $U \rightarrow X$. 

Suppose  $(\@X, X, r)$ and $(\@Y, Y, s)$ are two triples with $\@X$ and $\@Y$ (resp. $X$ and $Y$) being sft-Artin stacks over $k[[t]]$ (resp. $k$). Suppose we are given morphisms of stacks $\@X \rightarrow \@Y$ and $X  \rightarrow Y$ along with a linear map $\|Z^r \rightarrow \|Z^s$. Then this induces a morphism of the functors $\@X \times X \times \|Z^r \rightarrow \@Y \times Y \times \|Z^s$. We call such a morphism a \textit{geometric morphism}. 

Now consider the category $\@D$ whose objects are pairs $(S, (\@X, X, r))$ where $(\@X, X, r)$ is a triple as above and $S$ is a definable subassignement of $\@X \times X \times r$. A morphism $(S, (\@X,X,r)) \rightarrow (T, (\@Y, Y, s))$ is a geometric morphism $\@X \times X \times \|Z^r \rightarrow \@Y \times Y \times \|Z^s$ which maps $S$ into $T$. We say that morphism is an geometric equivalence if it induces a weak equivalence $S(K) \rightarrow T(K)$ for every field extension $K$ of $k$ and let us denote the class of geometric equivalences by $\@W$. (Here we view $S(K) \subset \pi_0(\@X(K[[t]]) \times \pi_0(X(K)) \times \|Z^r$ not just as a ordinary set, but as a set whose objects are homotopy types.)  Now consider the category $\@W^{-1}\@D$ obtained from $\@D$ by localizing with respect to $\@W$.  It is easy to see that $\@W$ is a left-multiplicative system of morphisms thus the localization makes sense. Indeed, any morphism $S \rightarrow T$ in the localization is given by an equivalence $W \rightarrow S$ and a geometric morphism $W \rightarrow T$. We refer to the morphisms in $\@W^{-1}\@D$ as \textit{definable morphisms}. (Note that unlike the case in which we are working with definable subassignments of varieties, we cannot simply say that a morphism $f: S \rightarrow T$  is  definable if its graph $\Gamma_f$ is a definable subassignment of $S \times T$ since, in general, $\Gamma_f \rightarrow S \times T$ is not a monomorphism.)

In the above construction, if we restrict ourselves to objects of the type $(S, (\|A^n_{k[[t]]}, \|A^m_k, r))$, where where $n,m \geq 0$, we denote the resulting category by $\Def_k$. (This is exactly the category $\Def_k$ defined in \cite{CL} except for the slight difference explained in Remark \ref{remark-about-subassignments}.) More generally, for any subassignment $S$, we denote by $\Def_S$ the category of subassignments contained in $S \times \|A^n_{k[[t]]}\times \|A^m_k \|Z^r$ where $n,m \geq 0$. We denote by $\RDef_S$ the subcategory of $\Def_S$ consisting of subassignments of $S \times \|A^m_k$ for $m \geq 0$. We denote by $K_0(RDef_S)$ the corresponding Grothendieck ring (\cite{CL}, Section 5).

Let $A$ denote the ring $\|Z[\|L, \|L^{-1}, \{(1-\|L^{-o})^{-1}\}_{i>0}]$. Let $\@P(S)$ denote the ring of functions from the set of points of $S$ into $A$ generated by constant functions, definable functions from $S$ into $\|Z$ and functions of the form $\|L^{\beta}$ with $\beta: S \rightarrow \|Z$ being a definable morphism. We denote by $\@P^0(S)$ the subring of $\@P(S)$ generated by the characteristic functions of definable subassignments contained in $S$ and the constant function $\|L$. There is a natural ring homomorphism $\@P^0(S) \rightarrow K_0(RDef_S)$ sending $\|L$ to the class of $S \times \|A^1_k$ and sending the characterstic function $\mathbbm{1}_T$ of a subassignment $T \subset S$ to the class of $T$ itself (viewed as an element of $RDef_S$). Then the ring of constructible functions on $S$ is defined by
\[
\@C(S):= K_0(RDef_S) \otimes_{\@P^0(S)} \@P(S) \text{.}
\]

Let $\@X$ be a sft-Artin stack over $k[[t]]$ and let $S$ be a definable subassignment contained in $\@X$ (viewed as a definable subassignment itself). Then the Zariski closure $W$ of $S$ in $\@X$ is the intersection of all closed substacks $\@Y \subset \@X$ such that $S \subset \@Y$. We set $\dim(S) = \dim(\@Y/\Spec(k[[t]])$. More generally, if $S$ is a definable subassignment of a functor of the type $\@X \times X \times \|Z^r$ where $\@X$ and $X$ are sft-Artin stacks over $k[[t]]$ and $k$-respectively, then we define the dimension of $S$ to be the dimension of its projection to $\@X$. For every integer $d$, we denote by $\@C^{\leq d}(S)$ the ideal of $\@C(S)$ generated by the characteristic functions of subassigments $Z \subset S$ with $\dim(Z) \leq d$. This defines a filtration of $\@C(S)$ and we denote the associated graded group by $C(S) := \oplus_d C^d(S)$ (the group of constructible motivic Functions - note the capital `F') where $C^d(S) := \@C^{\leq d}(S)/\@C^{\leq d-1}(S)$. Note that $d$ can be negative but that the set of $d$ such that $\@C^{\leq d}(S) \neq \{0\}$ is bounded below.

Now suppose $S$ is in $\Def_k$ and $Z$ is in $\Def_S$. Then one can define (\cite{CL}, Section 10) a subgroup $I_SC(Z)$ of $C(Z)$ together with pushforward morphisms 
\[
f_!: I_SC(Z) \rightarrow C(Y)
\]
for every morphism $f: Z \rightarrow Y$ in $\Def_S$. When $S$ is simply the final object of $\Def_k$, and $f$ is the map $Z \rightarrow S$, then we write $f_!$ as $\mu$ and call it the motivic measure. 

More generally, if $\Lambda$ is an object of $\Def_k$, the above construction can be performed relative to $\Lambda$. Using relative dimension instead of dimension, we can obtain relative analogues $C(Z \rightarrow \Lambda)$ for $Z \rightarrow \Lambda$ in $\Def_{\Lambda}$. In particular, we obtain a morphism
\[
\mu_{\Lambda}: I_{\Lambda} C(Z \rightarrow \Lambda) \rightarrow \@C(\Lambda)= I_{\Lambda}C(\Lambda \rightarrow \Lambda) \text{.}
\]

Finally, we recall that for $f: Z \rightarrow \Lambda$, $f_!$ and $\mu_{\Lambda}$ are completely different notions. However, they do coincide when $\Lambda$ is a subassignment of $\|A^m_k \times \|Z^r$ (see \cite{CL}, Remark 14.2.3). (We will make use this in the following lemma.)

This completes our review of the basic terminology and results. Now, in the following two lemmas, we will try to adapt the essence of equation (\ref{p-adic-measure-stacks-equation}) to motivic integration. For an sft-Artin stack $\@X$ over $k[[t]]$, we will denote by $\-{\@X}$ the Artin stack $\@X \times_{\Spec(k[[t]])} \Spec(k)$ over $k$. If $p: \@X \rightarrow \@Y$ is a morphism of sft-Artin stacks over $k[[t]]$, let $\-p$ denote the pullback $\-{\@X} \rightarrow \-{\@Y}$. Note that the morphism $\rho_{\@X}: h[\@X] \rightarrow \-{\@X}$ is definable. 

\begin{lemma}
Let $f: \@U \rightarrow \@X$ be a f-surjective smooth covering map of affine schemes over $k[[t]]$. Then $[\phi] \in I_{\-{\@X}}C(\@X \rightarrow \-{\@X})$ if and only if then $[f^*(\phi)] \in I_{\-{\@U}} C(\@U \rightarrow \-{\@U})$. If these conditions hold, then
\[
\mu_{\-{\@U}}([f^*(\phi)]) =  \-f^*(\mu_{\-{\@X}}([\phi]))  \text{.}
\] 
\end{lemma}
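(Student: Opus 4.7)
The plan is to reduce the statement to two local cases using the local structure of smooth morphisms of affine schemes, and then apply the Fubini and base change machinery from \cite{CL}.

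First I would use the fact that a smooth morphism $f: \@U \to \@X$ of affine schemes over $k[[t]]$ is, \'etale-locally on $\@U$, the composition of an \'etale map $g: \@U' \to \|A^r \times \@X$ and the projection $p: \|A^r \times \@X \to \@X$, where $r$ is the relative dimension of $f$. Since the motivic measure and the pullback $f^*$ both behave well under \'etale covers of the source (a fact that itself uses Hensel's lemma at the level of $K[[t]]$-points), one reduces to checking the formula separately in two cases: $f$ \'etale, and $f$ the standard projection from affine space.

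For the projection $p$, the reduction-mod-$t$ diagram is a commutative square of projections, and the fibers of $\rho_{\|A^r \times \@X}$ split as products of fibers of $\rho_{\@X}$ with fibers of $\|A^r_{k[[t]]} \to \|A^r_k$. The desired equality then follows from the Fubini theorem for the motivic integral \cite{CL}, Section 14, together with the fact that, in the Cluckers-Loeser normalization, fiberwise integration of $\mathbbm{1}_{\|A^r_{k[[t]]}}$ along $\|A^r_{k[[t]]} \to \|A^r_k$ produces the Function $\mathbbm{1}_{\|A^r_k}$. For the \'etale case, Hensel's lemma furnishes, for every $K$-point $\-v$ of $\-{\@U}$, a bijection between the lifts of $\-v$ to $\@U(K[[t]])$ and the lifts of $\-f(\-v)$ to $\@X(K[[t]])$, so $f^*$ identifies the two fibers as well as their motivic measures, and the formula is immediate. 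In both cases the integrability condition transfers because it is a fiberwise condition and the fibers are either identified (\'etale case) or factored compatibly (projection case); the reverse direction of the \textit{if and only if}, which requires recovering integrability of $\phi$ from integrability of $f^*\phi$, uses the f-surjectivity of $f$ to guarantee that every $K$-point of $\-{\@X}$ is reached.

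The main obstacle I anticipate is the careful bookkeeping within the graded ring $C(Z \to \Lambda)$: one must verify that $f^*$ shifts the relative dimension grading by the relative dimension of $f$, and that $\mu_{\-{\@U}}$ and $\mu_{\-{\@X}}$ are normalized so that the projection case produces no stray factors of $\|L$. Once these compatibilities are pinned down, the combination of the two local cases with the \'etale-local reduction yields the identity on $\-{\@U}$, completing the proof.
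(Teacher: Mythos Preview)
Your approach to the displayed equality is exactly the paper's: cover $\@U$ by small affines, factor $f$ locally as an \'etale map followed by a projection $\|A^r_{\@X}\to\@X$, and treat the two cases separately. The paper dispatches the \'etale case by observing that $\@U\to\-{\@U}\times_{\-{\@X}}\@X$ is a definable isomorphism with $\ordjac=0$, which is your Hensel argument phrased in the change-of-variables language of \cite{CL}.

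Where your sketch is thinner is the ``if and only if'' for integrability, and here the paper takes a slightly different route than your fiberwise description. Rather than arguing directly on fibers, the paper first reduces $I_{\-{\@X}}C(\@X\to\-{\@X})$ to $IC(\@X)$ via \cite{CL}, Thm.~10.1.1(A5) and Remark~14.2.3, and then applies the projection formula (A3) with $\beta=[\mathbbm{1}_{\@U}]$: integrability of $f^*(\phi)=f^*(\phi)\cdot\mathbbm{1}_{\@U}$ is equivalent to integrability of $\phi\cdot f_!(\mathbbm{1}_{\@U})$, and smoothness gives $f_!(\mathbbm{1}_{\@U})=\rho_{\@X}^*(\-f_!(\mathbbm{1}_{\-{\@U}}))$. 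The f-surjectivity is then used, not merely to hit every $K$-point, but to produce a \emph{definable section} $s:\-{\@X}\to\-{\@U}$ (by lifting generic points and noetherian induction), so that $\-f_!(\mathbbm{1}_{\-{\@U}})=[\-{\@X}]+\gamma$ with $\gamma\in K_0(\RDef_{\-{\@X}})$ positive; a positivity argument (\cite{CL}, Thm.~12.2.1) then extracts integrability of $[\phi]$ from that of $\gamma[\phi]+[\phi]$. Your ``fiberwise condition'' language would need to be unpacked into something like this to actually close the reverse implication.
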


\begin{proof}
First note that $[\phi] \in I_{\-{\@X}}C(X \rightarrow \-{\@X})$ if and only if $[\phi] \in IC(X)$ (see \cite{CL}, Thm 10.1.1, part (A5) and Remark 14.2.3). Thus we simply wish to prove that $[\phi] \in IC(\@X)$ if and only if $f^*([\phi]) \in IC(\@U)$. 

First we note a consequence of the f-surjectivity of $f$ that we will use below. Since $f$ is f-surjective, so is $\-f$. In particular, every generic point of $\-{\@X}$ can be lifted to $\-{\@U}$. Thus there is an open dense subscheme $X_0 \subset \-{\@X}$ and a $\-{\@X}$-morphism $X_0 \rightarrow \-{\@U}$. Repeating the procedure for $\-{\@X} - X_0$, we see that $\-f$ has a definable (in the language of rings) section $s: \-{\@X} \rightarrow \-{\@U}$. 

The fact that $[\phi] \in IC(\@X)$ if and only if $f^*([\phi]) \in IC(\@U)$ is a consequence of (\cite{CL}, Thm 10.1.1, Part (A3)) which says that for $\alpha \in \@C(\@X)$ and $\beta \in IC(\@U)$, $\alpha f_!(\beta) \in IC(\@X)$ if and only if $f^*(\alpha) \beta \in IC(\@U)$ and that if these conditions are verified, then $f_!(f^*(\alpha)\beta) = \alpha f_!(\beta)$. 

We apply this with $\beta = [\mathbbm{1}_{\@U}]$ and $\alpha = \phi$. Since $f$ is a smooth morphism, it is easy to see that 
\[
f_!([\mathbbm{1}_{\@U}]) = \rho_{\@X}^*(\-f_{!}(\mathbbm{1}_{\-{\@U}})) \text{.} 
\]

Thus, $f^*(\phi) = f^*(\phi) \mathbbm{1}_{\@U} \in IC(\@U)$ if and only if $\rho_{\@X}^*(\-f_{!}(\mathbbm{1}_{\-{\@U}})) [\phi] \in IC(\@X)$. It is clear that if $[\phi] \in IC(\@X)$, then $\rho_{\@X}^*(\-f_{!}(\mathbbm{1}_{\-{\@U}}))[\phi] \in IC(\@X)$. To prove the converse, we may assume that $\phi \in \@C_+(X)$, the semiring of positive constructible motivic functions on $\@X$ (see \cite{CL}, Section 5). Let $\gamma = [\-{\@U} \backslash s(\-{\@X)}] \in K_0(RDef_{\-{\@X}})$ so that $\-f_{!}(\mathbbm{1}_{\-{\@U}}) = \gamma + [\-{\@X}]$. Thus we have 
\[
\rho_{\@X}^*(\-f_{!}(\mathbbm{1}_{\-{\@U}}))[\phi] = \gamma[\phi] + [\phi]
\]
and $\gamma[\phi] \in C_+(\@X)$, the semigroup positive constructible motivic Functions on $\@X$. Now apply (\cite{CL}, Thm 12.2.1) to conclude that $[\phi] \in IC(\@X)$. 

Finally, we prove the equality in the statement of the lemma. (For this part, it is not necessary to assume that $f$ is f-surjective.) To prove this, we may choose a suitable cover $\{\@U_i\}_i$ of $\@U$ by affine open subschemes $\@U_i$ and prove the result after replacing $\@X$ by $f(\@U_i)$ and $\@U$ by $\@U_i$. By choosing the pieces of the cover to be sufficiently small, we may assume that $f$ factors as 
\[
\@U \stackrel{f^{\prime}}{\longrightarrow} \|A^{\dim(\@U/\@X)}_{\@X} \longrightarrow X \text{,}
\]
where $f^{\prime}$ is étale and the morphism $\|A^{\dim(\@U/\@X)}_{\@X} \rightarrow X$ is the projection. Thus it suffices to prove the result for étale maps and maps of the form $\|A^{\dim(\@U/\@X)}_{\@X} \rightarrow X$. The latter case is trivial. Thus we now consider the case when $f$ is étale. But in this case, it is clear that $g: \@U \rightarrow \-{\@U} \times_{\-{\@X}} \@X$ is a definable isomorphism with $\rm{ordjac}(g) = 0$ (\cite{CL}, Section 8 and Thm. 12.1.1).
\end{proof}

This leads us to the following definition:

\begin{definition}
Let $\@X$ be a sft-Artin stack over $k[[t]]$ and let $f: \@U \rightarrow \@X$ be a f-surjective smooth atlas with $\@U$ being an affine scheme of finite type over $k[[t]]$. Then we define
\[
IC(\@X) = \{ [\phi] \in C(\@X)| [f^*(\phi)] \in IC(\@U)\} \text{.}
\]
\end{definition}
It follows from the preceding lemma that $IC(\@X)$ is independent of the choice of $f$. 

\begin{lemma}
\label{motivic-measure-computation}
Let $\@X$ be a sft-Artin stack over $k[[t]]$. Let $[\alpha] \in IC(\@X)$. Let $f: \@U \rightarrow \@X$ be a smooth covering map where $\@U$ is an affine scheme of finite type over $k[[t]]$. Let $\phi$ be the constructible function $\mu_{\-{\@U}}([f^*(\alpha)]) \in \@C(\-{\@U})$. Let $Z$ be any affine scheme of finite type over $k$, let $x: Z \rightarrow \-{\@X}$ by any morphism  and let $u: Z \rightarrow \-{\@U}$ be such that $\-p \circ u \cong x$. Then the element 
\[
u^*(\phi) \in \@C_{+}(Z)
\]
depends only on $x$ and $\alpha$, i.e. it is independent of the choice of $\@U$ and $u$.  
\end{lemma}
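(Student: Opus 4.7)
The plan is to compare the two choices of atlas by passing through a common smooth refinement, invoke the preceding lemma there to identify the two $\phi_i$'s after pullback, and then descend the resulting equality back to $Z$ via a definable section. Suppose we are given two pairs $(f_i: \@U_i \to \@X, u_i: Z \to \-{\@U}_i)$ ($i=1,2$) satisfying the hypotheses of the lemma, and write $\phi_i := \mu_{\-{\@U}_i}([f_i^*(\alpha)])$; the goal is $u_1^*(\phi_1) = u_2^*(\phi_2)$ in $\@C_+(Z)$.

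First, I would form the homotopy fibre product $V := \@U_1 \times_{\@X}^h \@U_2$, which is representable by an algebraic space of finite type over $k[[t]]$ since each $f_i$ is smooth and representable. Using Lemma \ref{f-surjective-atlas} I would pick an f-surjective smooth atlas $g: \@U_3 \to V$ with $\@U_3$ an affine scheme, and set $g_i := \mathrm{pr}_i \circ g: \@U_3 \to \@U_i$. Because $f_j: \@U_j \to \@X$ is f-surjective, so is its base change $V \to \@U_i$ (for $\{i,j\} = \{1,2\}$), and therefore each $g_i$ is an f-surjective smooth covering map between affine schemes, to which the preceding lemma applies.

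Next, since $[\alpha] \in IC(\@X)$ forces $[f_i^*(\alpha)] \in IC(\@U_i)$ by the very definition of $IC(\@X)$, the preceding lemma gives
\[
\mu_{\-{\@U}_3}([g_i^*(f_i^*(\alpha))]) = \-{g}_i^*(\phi_i) \qquad (i = 1, 2).
\]
The two compositions $f_1 \circ g_1$ and $f_2 \circ g_2$ are homotopic as morphisms $\@U_3 \to \@X$, since both factor the canonical map $\@U_3 \to V \to \@X$; hence the left-hand sides coincide and I obtain $\-{g}_1^*(\phi_1) = \-{g}_2^*(\phi_2)$ in $\@C(\-{\@U}_3)$.

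To descend this equality to $Z$, I would construct a global definable lift $v: Z \to \-{\@U}_3$ of $(u_1, u_2): Z \to \-{V}$. Adapting the stratification argument from the proof of the preceding lemma to the case where the target is an algebraic space rather than an affine scheme -- lift the generic points of $\-{V}$ to $\-{\@U}_3$ via the f-surjectivity of $\-{g}$, use a locally closed affine stratification of $\-{V}$ (as in Case 2 of Lemma \ref{finite-f-class}), and iterate -- produces a definable section $s: \-{V} \to \-{\@U}_3$ of $\-{g}$. Then $v := s \circ (u_1, u_2)$ satisfies $\-{g}_i \circ v \simeq u_i$, and pulling $\-{g}_1^*(\phi_1) = \-{g}_2^*(\phi_2)$ back along $v$ yields $u_1^*(\phi_1) = u_2^*(\phi_2)$. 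The main obstacle is this final descent step: one must verify that the stratification-patching procedure produces a genuinely definable section in the sense of $\Ldp$, not merely a piecewise-defined scheme-theoretic morphism, and that the pullback along $v$ preserves the $IC$ structure finely enough for the equality to propagate to $\@C_+(Z)$.
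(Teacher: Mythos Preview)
Your strategy matches the paper's: pass to a common smooth refinement $\@U_3$ of the two atlases over $\@U_1 \times^h_{\@X} \@U_2$, invoke the preceding lemma to obtain $\-{g}_1^*(\phi_1) = \-{g}_2^*(\phi_2)$ on $\-{\@U}_3$, then descend to $Z$. Two points deserve comment.

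First, your justification that $g_i$ is f-surjective rests on the claim that $f_j$ is f-surjective, but this is not part of the hypothesis: the lemma only assumes $f$ is a smooth covering map. Fortunately this is harmless, since the only thing you actually use is the equality $\mu_{\-{\@U}_3}([g_i^*(f_i^*(\alpha))]) = \-{g}_i^*(\phi_i)$, and the preceding lemma explicitly remarks that for this equality f-surjectivity is not needed. The paper accordingly makes no f-surjectivity claim about the $g_i$; it simply reduces to the case where the base $\@X$ is affine and cites the equality part of the preceding lemma.

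Second, and more substantively, the paper avoids your ``main obstacle'' entirely. Rather than constructing a global definable section $s: \-{V} \to \-{\@U}_3$ and composing with $(u_1,u_2)$, the paper works directly on $Z$ by Noetherian induction. Assume $Z$ irreducible with generic point $\eta$; the pair $(u_1,u_2)$ gives a morphism $v: Z \to \-{V}$, and one chooses $\@U_3$ so that $v|_{\eta}$ lifts to $\-{\@U}_3$. This lift extends to a dense open $Z_0 \subset Z$, on which the equality $u_1^*(\phi_1) = u_2^*(\phi_2)$ then follows from $\-{g}_1^*(\phi_1) = \-{g}_2^*(\phi_2)$. One repeats on $Z \setminus Z_0$ and concludes by Noetherian induction, since elements of $\@C(Z)$ are determined on a locally closed stratification. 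This sidesteps the question of whether patching over strata of the algebraic space $\-{V}$ yields a globally definable section, which is the step you correctly flagged as incomplete.
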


\begin{proof}
We may assume that $Z$ is irreducible. Suppose $f_1: \@U_1 \rightarrow \@X$ and $f_2: \@U_2 \rightarrow \@X$ are two choices for the atlas as mentioned in the statement of the lemma. Let $\phi_i := \mu_{\-{\@U}}([f_i^*(\alpha)]) \in \@C(\-{\@U_i})$. Let $u_i: Z \rightarrow \-{\@U}_i$ be a lift of $x$ to $\-{\@U_i}$ for $i = 1,2$. We wish to prove that $u_i^*(\phi_i) \in \@C_{+}(Z)$ is the same element for $i = 1,2$. 

There exists a morphism $v: Z \rightarrow \-{\@U_1 \times_{\@X} \@U_2}$ which lifts $u_1$ and $u_2$. Let $\eta$ denote the generic point of $Z$. Choose a smooth atlas $\@U_3 \rightarrow \@U_1 \times^h_{\@X} \@U_2$ such that $\@U_3$ is an affine scheme of finite type over $k[[t]]$ and such that $v|_{\eta}$ can be lifted to $U_3$. Then there is an open subscheme $Z_0 \subset Z$ such that the morphisms $u_i|_{Z^{\prime}}$ have a common lift to $U_3$. It will suffice to prove the result with $Z_0$ in place of $Z$. Indeed, if we can do this, we can repeat the procedure for $Z_1 = Z \backslash Z_0$. Proceeding in this manner, we can get the required result since $Z$ is noetherian. Thus we may assume that the morphisms $u_i: Z \rightarrow U_i$ for $i=1,2$ have a common lift $u_3:Z \rightarrow U_3$.  

Let $g_i: U_3 \rightarrow U_i$ for $i=1,2$ be the obvious maps and let $f_3:= f_1 \circ g_1 = f_2 \circ g_2$. Let $\phi_3 := \mu_{\-{\@U}}([f_3^*(\alpha)]) \in \@C(\-{\@U_3})$. Then it is clearly enough to show that $u_1^*(\phi_1) = u_3^*(\phi) = u_2^*(\phi_2)$. In other words, it is enough to prove the lemma assuming that $\@X$ is an affine scheme. 

Thus now assume that $\@X$ is an affine scheme and that $f: \@U \rightarrow \@X$ is a smooth covering map of affine schemes. It will suffice to prove the result when $Z = \-{\@U}$. In other words, we wish to prove that 
\[
\phi = \-f^*(\mu_{\-{\@X}}([\alpha])) \text{.}
\]
This is precisely the content of the previous lemma.
\end{proof}

Thus, with the notation of the above lemma, we have a rule which, for every affine scheme $Z$ of finite type over $k$ and a morphism $x: Z \rightarrow \-{\@X}$, assigns a constructible function $\phi_x \in \@C(Z)$ in a coherent manner. This leads us to define the following:

\begin{definition}
Let $X$ be a sft-Artin stack over $k$. A \textbf{constructible pseudo-function on $X$} is a rule $\phi$ which for every affine scheme $Z$ of finite type over $k$ and morphism $x: Z \rightarrow X$ assigns an element $\phi_x \in \@C(Z)$ such that if we have a commutative (upto homotopy) diagram
\[
\xymatrix{
Z_2 \ar[r]^{\alpha} \ar[dr]_{x_2} & Z_1 \ar[d]^{x_1}\\
& X
}
\] 
then $\alpha^*(\phi_{x_1}) = \phi_{x_2}$. 

We abuse notation and write $x^*(\phi)$ instead of $\phi_x$ even though $\phi$ is not a constructible function on $X$ in the usual sense. The constructible functions on $X$ clearly from a ring which we denote by $\@C(X)^{ps}$. 
\end{definition}

\begin{lemma}
\label{defining-pseudo-constructible-functions}
Suppose $X$ is a sft-Artin stack over $k$ and $\phi \in \@C(X)^{ps}$. Let $f: U \rightarrow X$ be an f-surjective morphism (not necessarily smooth) with $U$ being an affine scheme of finite type over $k$. Then $\phi$ is completely determined by $f^*(\phi)$. 
\end{lemma}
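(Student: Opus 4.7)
The strategy rests on the observation that if one could find a morphism $u: Z \to U$ together with a homotopy $f \circ u \simeq x$, then the pseudo-function axiom applied to the diagram $Z \stackrel{u}{\to} U \stackrel{f}{\to} X$ would give $\phi_x = u^*(f^*(\phi))$ directly, so that $\phi_x$ would be manifestly determined by $f^*(\phi)$. Such a global lift typically does not exist; the plan is to produce one over an open dense subscheme of $Z$ and then apply noetherian induction to the closed complement.

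To produce the lift together with its witnessing homotopy, I would form the homotopy fiber product $V := Z \times^h_X U$, which is an sft-Artin stack over $k$. Since f-surjectivity is preserved under base change (the definition is pointwise on field-valued points), the projection $V \to Z$ is f-surjective. Lemma \ref{f-surjective-atlas} then supplies an f-surjective smooth atlas $W \to V$ with $W$ an affine scheme of finite type over $k$; composing, $W \to Z$ is f-surjective as well. Assume $Z$ irreducible with generic point $\eta$ (otherwise treat irreducible components separately). By f-surjectivity of $W \to Z$, the point $\eta: \Spec k(\eta) \to Z$ lifts to a morphism $\tilde\eta: \Spec k(\eta) \to W$, and since $W$ is of finite type over $k$ the morphism $\tilde\eta$ spreads out to $s: Z_0 \to W$ on some open dense $Z_0 \subseteq Z$; after shrinking $Z_0$ one may arrange that $s$ is a strict section of $W \to Z$ over $Z_0$. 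The composition $u := (Z_0 \stackrel{s}{\to} W \to V \to U)$ is then a morphism $Z_0 \to U$, and the canonical homotopy in $V$ between $V \to Z \to X$ and $V \to U \to X$ pulls back along $s$ to a homotopy $f \circ u \simeq x|_{Z_0}$. By the pseudo-function axiom, $\phi_x|_{Z_0} = u^*(f^*(\phi))$, which is manifestly determined by $f^*(\phi)$.

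To conclude, let $Z_1 \subset Z$ denote the reduced closed complement of $Z_0$. This is an affine scheme of finite type over $k$, and by noetherian induction on closed subschemes of $Z$ the restriction $\phi_x|_{Z_1}$ is likewise determined by $f^*(\phi)$. Since an element of $\@C(Z)$ is determined by its restrictions to the pieces of a finite locally closed partition --- this follows from the definition of $\@C(Z)$ via $K_0(\mathrm{RDef}_Z)$ and the standard localization formalism for constructible motivic functions --- $\phi_x$ itself is determined, completing the induction.

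The \emph{main technical point} is that the morphism $u$ and the homotopy $f \circ u \simeq x|_{Z_0}$ must spread out from the generic point \emph{together}, not separately. This is precisely the role of forming the homotopy fiber product $V$ before invoking Lemma \ref{f-surjective-atlas}: the smooth atlas $W \to V$ packages the morphism-plus-homotopy data into a single scheme-valued morphism, after which the ordinary spreading-out principle for morphisms between finite-type affine schemes produces the section $s$, and pullback of the universal homotopy on $V$ furnishes the desired homotopy on $Z_0$.
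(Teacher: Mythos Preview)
Your proof is correct and follows the same strategy as the paper's: lift the generic point of $Z$ to $U$ using f-surjectivity, spread out to an open dense $Z_0$, and proceed by noetherian induction on the closed complement. The paper's argument is terser and leaves the spreading-out of the witnessing homotopy implicit, whereas your passage through the homotopy fiber product $V = Z \times^h_X U$ and an affine atlas $W \to V$ makes this step explicit.
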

\begin{proof}
Indeed, suppose $Z$ is an affine scheme of finite type over $k$ and $x \rightarrow X$ is a morphism. We wish to compute $x^*(\phi)$. We may assume that $Z$ is irreducible. If $\eta$ is its generic point, then we can lift $x|_{\eta}$ to $U$. Thus there exists an open subscheme $Z_0 \subset Z$ such that $x|_{Z_0}$ can be lifted to $U$. Then, as in the proof of Lemma \ref{motivic-measure-computation}, the fact that $Z$ is noetherian allows us to compute $\phi_x$. It is easy to see that the calculation does not depend on the choice of $Z_0$. 
\end{proof}

Using Lemma \ref{motivic-measure-computation} and Lemma \ref{defining-pseudo-constructible-functions}, we may define the following:

\begin{definition}
Let $\@X$ is a sft-Artin stack over $k[[t]]$. Let $f: \@U \rightarrow \@X$ be an f-surjective atlas with $\@U$ being an affine scheme  of finite type over $k[[t]]$. We denote by $\mu_{\-{\@X}}: IC(\@X) \rightarrow \@C(\-{\@X})^{ps}$ the group homomorphism which maps an element $[\alpha] \in IC(\@X)$ to the element of $\@C(\-{\@X})^{ps}$ determined by the element $\mu_{\-{\@U}}([f^*(\alpha)]) \in \@C(\-{\@U})$.

\end{definition}

At this point, we pause for some remarks to clarify some of the choices we have made in this construction.

\begin{remark}
\label{explanation-pseudo-constructible-functions}
Obviously, the above definition is not quite satisfactory since we expect $\mu_{\-{\@X}}$ to take values in the ring $\@C(\-{\@X})$. Every element $\phi \in \@C(\-{\@X})$ gives rise to an element of $\@C(\-{\@X})^{ps}$ which is represented by the element $f^*(\phi) \in \@C(\-{\@U})$. This gives us a ring homomorphism $\@C(\-{\@X}) \rightarrow \@C(\-{\@X})^{ps}$. It is not clear whether this is an isomorphism except in when $\@X$ is an algebraic space (in which case the fact that this homomorphism is an isomorphism can be proved by choosing a ``definable section" for the morphism $\-{\@U} \rightarrow \-{\@X}$). Hence our usage of the ring $\@C(\-{\@X})^{ps}$ is essentially a compromise. We note that when $\@X$ is an affine scheme, we are using the symbol $\mu_{\-{\@X}}$ to denote two maps, one with domain $\@C(\-{\@X})^{ps}$ and the other with domain $\@C(\-{\@X})$. However, these two maps can be identified via the isomorphism $\@C(\-{\@X}) \rightarrow \@C(\-{\@X})^{ps}$ and so this is a harmless abuse of notation. 

\end{remark}

\begin{remark}
\label{explanation-modified-measure}
A disadvantage of having to use $\@C(\-{\@X})^{ps}$ instead of $\@C(\-{\@X})$ we are not able to compute motivic measure in the usual sense. Indeed, if we were able to use $\@C(\-{\@X})$, then we would be able to use the obvious pushforward from the ring $\@C(\-{\@X})$ into the ring $\@C(\Spec(k[[t]] \times \Spec(k))^{st}$ which would give us motivic measure in the usual sense. Indeed, this would be necessary if we wished to compare motivic measure on different stacks. 

One way to rectify this is to construct motivic measure on sft-Artin in a manner analogous to the construction for varieties in \cite{DL}. In the context of geometric motivic integration, this procedure has been carried out in \cite{Ba}. This can be done, but we do not do this here since the arguments are very similar to those in \cite{Ba}. 
\end{remark}

\section{Specialization to $p$-adic integration}

We now return to the problem of applying the theory of motivic integration to $p$-adic integration on sft-Artin stacks. We briefly recall the process of specialization to $p$-adic integration using (\cite{CL2}) as our reference. 

Let $k$ be a number field with $\@O$ being its ring of integers. Recall the terminology from (\cite{CL2}, Section 9) that the language $\@L_{\@O}$ is the language $\Ldp$ with $\Val$-type constants added for the elements of $\@O[[t]]$ and $\Res$-type constants added for the elements of $\@O$. Then we may consider definable subassignments and constructible functions that are definable in the language $\@L_{\@O}$. For a subassignment $S$ definable in $\@L_{\@O}$, we have subrings $K_0(\RDef_S, \@L_{\@O})$ and $\@C(S, \@L_{\@O})$ respectively.  

Let $K$ be a $p$-adic completion of $k$ with valuation ring $R_K$ and residue field $k_K$. Let $\omega_K$ be a uniformizing parameter in $R_K$. Then one has a $\@O$-algebra homomorphism $\lambda_{\@O,K}: \@O[[t]] \rightarrow K$ defined by 
\[
\lambda_{\@O,K}(\sum_{i \geq 0} a_i t^i) = \sum_{i \geq 0} a_i \omega^i \text{.}
\]
Also, for every $\alpha$ in $\@O$, let $\-{\alpha}$ denote the image of $\alpha$ under the quotient map $\@O \rightarrow k_K$. In a $\@L_{\@O}$ formula $\phi$, if we interpret every $\Val$-type constant $a \in \@O[[t]]$ as $\lambda_{\@O,K}(a) \in K$, and every $\Res$-type constant $\alpha \in \@O$ as $\-{\alpha} \in k_K$, then $\phi$ defines a subset $\phi_K$ of $R_K^m \times k_K^n \times \|Z^r$ for some non-negative integers $n,m,r$. We recall (\cite{CL2} or \cite{DL}) that if two formulas $\phi$ and $\psi$ define the same subassignment $S$ of $\|A^{m}_{k[[t]]} \times \|A^n_k \times \|Z^r$ then the subsets $\phi_K$ and $\psi_K$ of $R_K^m \times k_K^n \times \|Z^r$ are equal for almost all choices of $K$. One abuses notation to denote this set by $S_K$. Similarly, definable morphisms $f: S \rightarrow T$ give functions $f_K: S_K \rightarrow T_K$ for almost all $K$. 

Similarly, a constructible function on $S$ can be interpreted to give a function from $S_K$ into $\|Q$ for almost all $K$. For this we first interpret the elements of $K_0(\RDef_S, \@L_{\@O})$ and then the elements of $\@P(S)$. 

If $\phi \in K_0(\RDef_S, \@L_{\@O})$ is such that it is represented by $[\pi: W \rightarrow S$ with $W \in S \times \|A^n_k$. Then $W_K$ is in $S_K \times k_K^n$ and we have the projection map $\pi_K: W_K \rightarrow S_K$. Then we define the function $\phi_K$ on $X_K$ by $\phi_K(x):= |\pi_K^{-1}(x)|$. Then one extends this construction by linearity to the whole of $K_0(\RDef_S, \@L_{\@O})$. 

To interpret the elements of $\@P(S)$ over $K$, we express such an element $\phi$ in terms of $\|L$ and definable functions $\alpha: S \rightarrow \|Z$. Then we interpret $\|L$ as $q_K = |k_K|$ and $\alpha$ as a function $\alpha_K: S_K \rightarrow \|Q$ which is well-defined for almost all $K$. Now, we can interpret the elements of $\@C(S, \@L_{\@O})$ as functions on $S_K$ by tensoring. 

Now let $X$ be an sft-Artin stack over $\@O$. Let $X_t:= X \times_{\Spec(\@O)} \Spec(k[[t]])$ which is a sft-Artin stack over $k[[t]]$. We claim that a definable subassignment $S$ on $X_t$ defines a subset $S_K$ of $\pi_0(X(R_K))$ for almost all $K$. Indeed, we choose a f-surjective smooth atlas $f: U \rightarrow X$ with $U$ being an affine scheme of finite type over $\@O[[t]]$. Let $V \rightarrow U \times_X^h X$ be an f-surjective smooth atlas where $V$ is an affine scheme of finite type over $\@O$. Let $g_1, g_2: V \rightarrow U$ be the maps obtained by composing $V \rightarrow U \times^h_X U$ with the two projections $U \times_X^h U \rightarrow U$. Then $f^{-1}(S)$ is a definable subassignment on $U_{t}$ which defines a subset given by a formula $\phi$. Let $\psi_i$ be the pullback of the formula $\phi$ via $g_i$ for $i=1,2$. Clearly, $\psi_1$ and $\psi_2$ define the same subassignment on $V_{t}$. The formula $\phi$ defines a subset $\phi_K$ of $U(R_K)$. This is the preimage of a subset of $\pi_0(X(R_K))$ if and only if the subsets $(\psi_1)_K$ and $(\psi_2)_K$ are equal. But we know that this is true for almost all $K$. Thus we see that a definable subassignment on $X_t$ defines a subset $S_K$ of $\pi_0(X(R_K))$ for almost all $K$. By similar arguments, one can interpret constructible functions $\phi$ on $S_K$ to give functions $S_K \rightarrow \|Q$ for almost all $K$. By similar arguments, one can show that a constructible function $\phi \in \@C(S)$ defines a function $\phi_K: S_K \rightarrow \|Q$ for almost all $K$. 

For any affine scheme $T$ of finite type over $\@O$, we note that the set $(\-{T_t})_K$ is simply the set $T(k_K)$ of $k_K$ valued points on $T$. Indeed, $T$ can be defined as a closed subscheme of $\|A^n_{\@O}$ cut out by polynomials with coefficients in $\@O$. The same polynomials can be used to define $T_t$ as a closed subscheme of $\|A^n_{\@O[[t]]}$ and $\-{T_t}$ as a closed subscheme of $\|A^n_k$. Interpreting the coefficients of those polynomials as elements in $k_K$ via the map $\alpha \mapsto \-{\alpha}$, we see that 
\[
(\-{T_t})_K = (T \times_{\Spec(\@O} \Spec(k_K))(k_K) = T(k_K) \text{.}
\]

Let $\phi \in \@C(\-{X_t}, \@L_{\@O})^{ps}$. Then with $U$ as above, $f^*(\phi) \in \@C(\-{U_t}, \@L_{\@O})$. Then by the above arguments we can define a function $(f^*(\phi))_K$  on the set $(\-{U_t})_K = U(k_K)$ for almost all $K$.  

\begin{claim}
The function $(f^*(\phi))_K$ is constant on the fibres of the map $U(k_K) \rightarrow \pi_0(X(k_K))$. 
\end{claim}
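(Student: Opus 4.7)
The plan is to combine the coherence property of pseudo-functions with the f-surjectivity of the atlas $V \to U \times_X^h U$. Unfolding the definition, $f^*(\phi)$ is just the value $\phi_{\-f}$ of the pseudo-function $\phi$ on the morphism $\-f \colon \-{U_t} \to \-{X_t}$. For the two projections $\-{g_1}, \-{g_2}\colon \-{V_t} \to \-{U_t}$, applying the coherence property of $\phi$ to the strictly commutative triangles gives $\-{g_i}^{\,*}(f^*(\phi)) = \phi_{\-f \circ \-{g_i}}$ in $\@C(\-{V_t}, \@L_{\@O})$ for $i=1,2$. By construction $V$ maps to $U \times_X^h U$, so the two compositions $\-f \circ \-{g_1}$ and $\-f \circ \-{g_2}$ are $2$-isomorphic as morphisms $\-{V_t} \to \-{X_t}$; applying the coherence property once more, this time with $\alpha = \mathrm{id}_{\-{V_t}}$, yields $\phi_{\-f \circ \-{g_1}} = \phi_{\-f \circ \-{g_2}}$, and hence
\[
\-{g_1}^{\,*}(f^*(\phi)) \;=\; \-{g_2}^{\,*}(f^*(\phi)) \quad \text{in } \@C(\-{V_t}, \@L_{\@O}).
\]

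Next I specialize this identity to the residue field $k_K$. The specialization construction reviewed earlier in this section sends pullbacks of constructible functions to pointwise pullbacks along the $K$-specialized morphisms, so for almost all $K$ the displayed identity becomes
\[
(f^*(\phi))_K(g_{1,K}(v)) \;=\; (f^*(\phi))_K(g_{2,K}(v)) \qquad \text{for every } v \in V(k_K).
\]

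Finally, I show that every pair $u_1, u_2 \in U(k_K)$ lying in the same fibre of $U(k_K) \to \pi_0(X(k_K))$ is of the form $(g_{1,K}(v), g_{2,K}(v))$ for some $v$. Being in the same fibre means that $f(u_1)$ and $f(u_2)$ are $2$-isomorphic in $X(k_K)$, which produces a point $w \in \pi_0\bigl((U \times_X^h U)(k_K)\bigr)$ whose projections to $U(k_K)$ are $u_1$ and $u_2$. Since $V \to U \times_X^h U$ is f-surjective, the defining property of f-surjectivity from Section \ref{lifting-R-valued-points-to-atlas} furnishes a genuine $v \in V(k_K)$ lifting $w$, and then $g_{i,K}(v) = u_i$. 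Combined with the previous paragraph, this gives $(f^*(\phi))_K(u_1) = (f^*(\phi))_K(u_2)$, proving the claim. The argument is essentially formal; the main point that makes it run cleanly is that f-surjectivity delivers the lift over $k_K$ itself rather than over a finite extension—without this one would have to invoke a Lang--Weil estimate to descend the lift to $k_K$ for $q_K$ sufficiently large, which would slightly complicate the ``for almost all $K$'' clause.
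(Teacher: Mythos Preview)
Your proof is correct and follows essentially the same approach as the paper: use the coherence property of pseudo-functions to obtain $\-{g_1}^{\,*}(f^*(\phi)) = \-{g_2}^{\,*}(f^*(\phi))$, specialize to $k_K$, and then lift a pair $u_1,u_2$ in a common fibre to $V(k_K)$ via the f-surjectivity of $V \to U\times_X^h U$. Your write-up is in fact more explicit than the paper's in spelling out why the coherence axiom applies (the two compositions $\-f\circ\-{g_i}$ are homotopic) and why the lift to $V(k_K)$ exists without passing to a field extension.
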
   
\begin{proof}
Using the notation we set up above, we look at the functions $g_1^* \circ f^* (\phi)$ and $g_2^* \circ f^* (\phi)$ on $\-{V_t}$. By the definition of a constructible pseudo-function, these functions are equal. Thus 
\begin{eqnarray*}
(g_1)_K^* (f^* (\phi))_K & = & (g_1^* \circ f^*(\phi)))_K \\
                       & = & (g_2^* \circ f^*(\phi)))_K \\
                       & = & (g_2)_K^* (f^* (\phi))_K
\end{eqnarray*}
for almost all $K$. If $u_1$ and $u_2$ are two points of $U(k_K)$ which lie in the same fibre of $U(k_K) \rightarrow \pi_0(X(k_K))$, then they have a common lift $v$ to $V(k_K)$. This proves our claim. 
\end{proof}

Given any $\phi \in  \@C(\-{X_t}, \@L_{\@O})^{ps}$, we now define its evaluation $\gamma_{K}(\phi) \in \|Q$ which is defined for almost all $K$. To do this, for each $x \in \pi_0(X(k_K))$, we choose an arbitrary $u_x$ in the fibre of $U(k_K) \rightarrow \pi_0(X(k_K))$ over $x$. Then we define
\[
\gamma_K(\phi) = \sum_{x \in \pi_0(X(k_K))} (f^*(\phi))_K(u_x) \cdot \#\{x\} 
\]
where $\#$ is the counting measure we defined in Section \ref{p-adic-measure-definition}, equation (\ref{counting-measure}). By the claim we proved above, this is independent of the choice of $u_x$ for almost all $K$. 

\begin{theorem}
\label{specialization}
Let $X$ be a sft-Artin stack over $\@O$ with $\dim(X/\@O)=d$. let $\phi \in IC^d(X_t,\@L_{\@O})$. Then for almost all $K$, $\phi_K$ is integrable over $\pi_0(X(R_K))$ and 
\[
\gamma_K(\mu_{\-{X_t}}(\phi)) = \int_{\pi_0(X(R_K))} \phi_K d \mu_d \text{.}
\]
\end{theorem}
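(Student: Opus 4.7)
The plan is to reduce the statement to the Cluckers--Loeser specialization theorem for affine schemes, by pulling everything back through an f-surjective smooth atlas and using the formula (\ref{p-adic-measure-stacks-equation}) that defines the $p$-adic measure on stacks. I would begin by choosing an f-surjective smooth atlas $f: U \to X$ with $U$ an affine scheme of finite type over $\@O$ of relative dimension $e$. By the construction of $\mu_{\-{X_t}}(\phi)$ as the pseudo-constructible function represented by $\mu_{\-{U_t}}(f^*\phi) \in \@C(\-{U_t})$, together with the definition of $\gamma_K$,
\[
\gamma_K(\mu_{\-{X_t}}(\phi)) = \sum_{x \in \pi_0(X(k_K))} (\mu_{\-{U_t}}(f^*\phi))_K(u_x) \cdot \#\{x\},
\]
where $u_x$ is any lift of $x$ to $U(k_K)$.

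Next I would unfold the right-hand side of the desired identity. Extending Definition \ref{p-adic-measure-definition} from measures of Borel sets to integrals by linearity---which is legitimate because $(f^*\phi)_K$, being pulled back from $\pi_0(X(R_K))$, is constant on the fibres of $U(R_K) \to \pi_0(X(R_K))$---one obtains
\[
\int_{\pi_0(X(R_K))} \phi_K \, d\mu_d = \sum_{s \in \pi_0(X(k_K))} \frac{1}{m_s} \int_{f^{-1}(X(R_K)_s)} (f^*\phi)_K \, d\mu_e,
\]
where $X(R_K)_s := \tau_{0,X}^{-1}(s)$. Decomposing the preimage as $f^{-1}(X(R_K)_s) = \bigsqcup_{\bar u \in f^{-1}(s)} \tau_{0,U}^{-1}(\bar u)$ and invoking the known specialization theorem for the affine scheme $U$ (\cite{CL2}, Section 9) in its relative form with respect to $U_t \to \-{U_t}$ yields, for almost all $K$ and every $\bar u \in U(k_K)$,
\[
\int_{\tau_{0,U}^{-1}(\bar u)} (f^*\phi)_K \, d\mu_e = (\mu_{\-{U_t}}(f^*\phi))_K(\bar u).
\]
By the Claim immediately preceding this theorem, the right-hand side depends only on $s = [f(\bar u)]$; denote this common value by $c_s$.

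To close the computation, I would apply Lemma \ref{counting-by-fibres} to $f: U \to X$ over $k_K$: since $U$ is a scheme, $\#\{\bar u\} = 1$ for every $\bar u \in U(k_K)$, so the ordinary cardinality $|f^{-1}(s)|$ equals $m_s \cdot \#\{s\}$. Combined with the previous step, this gives, for almost all $K$,
\[
\int_{f^{-1}(X(R_K)_s)} (f^*\phi)_K \, d\mu_e = m_s \cdot \#\{s\} \cdot c_s,
\]
and therefore
\[
\int_{\pi_0(X(R_K))} \phi_K \, d\mu_d = \sum_s \#\{s\} \cdot c_s = \gamma_K(\mu_{\-{X_t}}(\phi)),
\]
which is the sought identity. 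Integrability of $\phi_K$ over $\pi_0(X(R_K))$ follows from the integrability of $(f^*\phi)_K$ over $U(R_K)$ guaranteed by the affine case, because the stack integral decomposes as a finite nonnegative combination of restrictions of the $m_s^{-1}\mu_e$ and $\pi_0(X(k_K))$ is finite.

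The step I expect to be the main obstacle is the bookkeeping around the integrability filtration $IC^d$: one must verify that the definition of $IC(X_t, \@L_{\@O})$ for the stack (given via pullback to the atlas $U$) is exactly what is needed to invoke the relative specialization theorem on $U$ (namely $f^*\phi \in IC^e(U_t \to \-{U_t}, \@L_{\@O})$), taking proper account of the smooth dimension shift $e-d$ in the filtration. In addition, one must check that the ``almost all $K$'' exclusions occurring at each step---the interpretation of definable morphisms and subassignments, the affine specialization of motivic integrals, and the evaluation of $\gamma_K$---can be merged into a single cofinite set of primes, which is routine but requires care because our input $f$ itself involves only finitely many definable data.
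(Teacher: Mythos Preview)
Your proposal is correct and follows essentially the same route as the paper: choose an f-surjective smooth atlas $f:U\to X$, invoke the Cluckers--Loeser affine specialization theorem (\cite{CL2}, Thm.~9.1.5) to identify $(\mu_{\-{U_t}}(f_t^*\phi))_K(\bar u)$ with the fibrewise $p$-adic integral over $\tau_0^{-1}(\bar u)$, use the Claim to obtain constancy on fibres of $U(k_K)\to\pi_0(X(k_K))$, and then apply Lemma~\ref{counting-by-fibres} to match the weighted sums. The only cosmetic difference is that the paper runs the chain of equalities starting from $\gamma_K(\mu_{\-{X_t}}(\phi))$ and rewriting it toward the integral, whereas you start from the integral via Definition~\ref{p-adic-measure-definition} and work back to $\gamma_K$; the ingredients and their roles are identical.
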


\begin{proof}
Let $f: U \rightarrow X$ be an f-surjective smooth atlas with $U$ being an affine scheme over $\@O$ with $\dim(U/\@O)=e$. Then we know that $f_t^*(\phi)$ is in $IC^e(U_t)$. From (\cite{CL2}, Thm. 9.1.5), it follows that for any $u \in U(k_K)$, 
\[
(\mu_{\-{U_t}}(f_t^*(\phi)))_K(u) = \int_{\tau_0^{-1}(u)} \phi_K d \mu_d 
\]
where $\tau_0$ is the truncation map as defined in section \ref{p-adic-measure-on-stacks}. If $x \in \pi_0(X(k_K))$ and $U_x:= U \times_{X,x} \Spec(k_K)$ and $f^{-1}(x) := \{u \in U(k_K)| f(u)=x \}$, we know from Lemma \ref{counting-by-fibres} that $\#f^{-1}(x) = (\#U_x(k_K)) \cdot (\#\{x \})$. Also, by the above claim, $(\mu_{\-{U_t}}(f_t^*(\phi)))_K(u)$ is constant as $u$ varies through $f^{-1}(x)$. 
Thus
\begin{eqnarray*}
\gamma_K(\mu_{\-{X_t}}(\phi)) & = & \sum_{x \in \pi_0(X(k_K))} (\mu_{\-{U_t}}(f_t^*(\phi)))_K(u_x) \cdot \#\{x\} \\
               & = & \sum_{x \in \pi_0(X(k_K))} \left(\frac{1}{\#U_x(k_K)} \right)(\mu_{\-{U_t}}(f_t^*(\phi)))_K(u_x) \cdot \#f^{-1}(x) \\
               & = & \sum_{u \in U(k_K)}\left(\frac{1}{\#U_x(k_K)} \right)\mu_{\-{U_t}}(f_t^*(\phi))_K(u) \\
               & = & \sum_{u \in U(k_K)}\left(\frac{1}{\#U_x(k_K)} \right)  \int_{\tau_0^{-1}(u)} \phi_K d \mu_d \\
               & = & \int_{\pi_0(X(R_K))} \phi_K d \mu_d
\end{eqnarray*}
as required.
\end{proof}

Let $\@C(\-{X_t}, \@L_{\@O})^{ps}[[T]]_{\rm{rat}}$ be the subring of $\@C(\-{X_t}, \@L_{\@O})^{ps}[[T]]$ generated by $\@C(\-{X_t}, \@L_{\@O})^{ps}[T]$ and the set $(1-\|L^aT^b)^{-1}$ where $a \in \|Z$ and $b \in \|N \backslash \{0\}$. For any $R(T) \in  \@C(\-{X_t}, \@L_{\@O})^{ps}[T]$ we can define the element $\gamma_K(R(T)) \in \|Q[T]$ for almost all $K$ by applying $\gamma_K$ to the coefficients of $R(T)$. Then by mapping $(1-\|L^aT^b)^{-1}$ to $(1-q_K^aT^b)^{-1}$, we can define element $\gamma_K(P(T)) \in \|Q(T)$ for almost all $K$. 

\begin{theorem}
Let $X$ be an sft-Artin stack over $\@O$. Then there exists an element $P_X(T) \in \@C(\-{X_t}, \@L_{\@O})^{ps}[[T]]_{\rm{rat}}$ such that $\gamma_K(P_X(T)) = P_{X_K}(T)$  for almost all $K$. 
\end{theorem}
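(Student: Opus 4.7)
My plan is to proceed by noetherian induction on closed substacks of $X$. As observed after Definition \ref{Q_X-definition}, establishing rationality of $P_X(T)$ reduces, via $P_X(T) = P_{X_{sing}}(T) + Q_X(T)$ and the inductive hypothesis applied to the proper closed substack $X_{sing}$, to constructing a motivic avatar $Q_X(T) \in \mathcal{C}(\-{X_t},\mathcal{L}_{\mathcal{O}})^{ps}[[T]]_{\mathrm{rat}}$ whose specialization is $Q_{X_K}(T)$ for almost all $K$, because $\gamma_K$ is a ring homomorphism and therefore compatible with sums of rational series.

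To produce $Q_X(T)$, I would invoke Lemma \ref{Q_X}, which identifies the coefficient of $T^n$ in $Q_{X_K}(T)$ with $q_K^{nd}\mu_d(M_{n,K})$, where $M_n = \pi_0(X(R)) \setminus \tau_{n,X}^{-1}(X_{sing}(R_n))$ and $d = \dim(X/\mathcal{O})$. Choosing an f-surjective smooth atlas $f: U \to X$ with $U$ affine of relative dimension $e$ over $\mathcal{O}[[t]]$, equation (\ref{p-adic-measure-stacks-equation}) yields
\[
q_K^{nd}\mu_d(M_{n,K}) = \sum_{s \in \pi_0(X(k_K))} \frac{q_K^{ne}\mu_e(f^{-1}(M_n)_{s,K})}{m_s}.
\]
Since $X_{sing}$ is a definable closed substack (cut out on $U$ by a Fitting ideal as in Definition \ref{singular-locus}) and the truncation morphisms are definable in $\mathcal{L}_{\mathcal{O}}$, the subassignment $f^{-1}(M_n) \subset U$ is definable.

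At the atlas level I would apply the Cluckers--Loeser motivic rationality for Poincaré series of definable subassignments on affine schemes (\cite{CL2}, compare the proof of Thm.~9.1.5), which yields an element $\tilde Q_U(T) \in \mathcal{C}(\-{U_t},\mathcal{L}_{\mathcal{O}})[[T]]_{\mathrm{rat}}$ whose $T^n$-coefficient is $\mathbb{L}^{ne}\mu_{\-{U_t}}([\mathbbm{1}_{f^{-1}(M_n)}])$ and which specializes under $\gamma_K$ to the function $u \mapsto q_K^{ne}\mu_e(f^{-1}(M_n)_{K} \cap \tau_0^{-1}(u))$ on $U(k_K)$. By Lemma \ref{motivic-measure-computation}, each such coefficient descends to a well-defined element of $\mathcal{C}(\-{X_t},\mathcal{L}_{\mathcal{O}})^{ps}$ independent of $f$. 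The division by $m_s$ appearing in the display above is exactly the fibre-count correction built into the definition of motivic measure on stacks in Section \ref{motivic-measure-on-stacks}, so assembling the descended coefficients yields the series $Q_X(T)$. The specialization $\gamma_K(Q_X(T)) = Q_{X_K}(T)$ then follows by applying Theorem \ref{specialization} coefficient-wise, completing the induction.

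The main obstacle I anticipate is the descent step: one must check that the rational form of the series --- that is, membership in the subring $\mathcal{C}(\-{X_t},\mathcal{L}_{\mathcal{O}})^{ps}[[T]]_{\mathrm{rat}}$ generated by polynomials and the denominators $(1-\mathbb{L}^aT^b)^{-1}$ --- is preserved under the passage from $\mathcal{C}(\-{U_t},\mathcal{L}_{\mathcal{O}})$ to the pseudo-constructible ring $\mathcal{C}(\-{X_t},\mathcal{L}_{\mathcal{O}})^{ps}$ through the construction of Section \ref{motivic-measure-on-stacks}. Since this map is defined by pullback-compatibility and the denominators are already present at the atlas level, this is really a matter of verifying that the independence assertion of Lemma \ref{motivic-measure-computation} preserves the structure of the rational subring; the $m_s$-normalization is already motivic, being governed by $\-f_!(\mathbbm{1}_{\-U})$ as in the proof of the lemma preceding Lemma \ref{motivic-measure-computation}.
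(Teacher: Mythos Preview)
Your proposal is correct and follows essentially the same route as the paper: reduce to $Q_X(T)$ by noetherian induction on closed substacks, express its coefficients via Lemma~\ref{Q_X} as $\mathbb{L}^{nd}\mu_{\overline{X_t}}$ of the characteristic functions of the definable subassignments $M_n$, pass to an f-surjective affine atlas $f:U\to X$, invoke the Cluckers--Loeser rationality theorem there (the paper cites \cite{CL}, Thm.~14.4.1 rather than \cite{CL2}, Thm.~9.1.5), and use Theorem~\ref{specialization} for the compatibility with $\gamma_K$. One small point: the paper keeps the exponent $\mathbb{L}^{nd}$ at both the stack and the atlas level (the dimension shift $e-d$ is absorbed into the relative measure $\mu_{\overline{U_t}}$), whereas you write $\mathbb{L}^{ne}$ on $U$; this is just a normalization bookkeeping difference, but you should reconcile it explicitly. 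Your concern about descent of rationality is handled in the paper exactly as you anticipate: since an element of $\mathcal{C}(\overline{X_t})^{ps}$ is \emph{by definition} determined by its pullback to $\overline{U_t}$, and the denominators $(1-\mathbb{L}^aT^b)^{-1}$ are scalar, rationality of the representative $f_t^*(Q_X(T))$ in $\mathcal{C}(\overline{U_t})[[T]]$ immediately gives $Q_X(T)\in \mathcal{C}(\overline{X_t})^{ps}[[T]]_{\mathrm{rat}}$.
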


\begin{proof}
By the comments preceding Lemma \ref{Q_X}, it suffices to prove the theorem with $Q_X$ in place of $P_X$. In other words, we wish to show that there exists an element $Q_X(T) \in \@C(\-{X_t}, \@L_{\@O})^{ps}[[T]]_{\rm{rat}}$ such that $\gamma_K(Q_X(T)) = Q_{X_K}(T)$ for almost all $K$ (see Definition \ref{Q_X-definition} for the definition of the power series $Q_{X_K}(T)$). 

Let $f: U \rightarrow X$ be an f-surjective smooth atlas with $U$ being an affine scheme over $\@O$. Let $M^U_n$ be the definable subassignment on $U_t$ defined by the condition $u \notin U_{sing} \mod(t^{n+1})$. Let $M_n^X$ be the image of $M^U_n$ in $X_t$. Then it is easy to see that $M_n^U = f_t^{-1}(M^U_n)$. We define $\phi_n^X$ (resp. $\phi^U_n$) to be the characteristic function of $M^X_n$ (resp. $M^U_n$). Then it is easy to see from Lemma \ref{Q_X} and Theorem \ref{specialization} that if we define $Q_X(T)$ by the formula 
\[
Q_X(T):= \sum_{n=0}^{\infty} \|L^{nd} \mu_{\-{X_t}}(\phi^X_n) T^n \text{,}
\]
then $\gamma_K(Q_X(T)) = Q_{X_K}(T)$ for almost all $K$. Also, $Q_X(T) \in \@C(\-{X_t}, \@L_{\@O})^{ps}[[T]]$ is represented by 
\[
f_t^*(Q_U(T))= \sum_{n=0}^{\infty} \|L^{nd} \mu_{\-{U_t}}(\phi^U_n) T^n \text{,}
\]
in $\@C(\-{U_t}, \@L_{\@O})[[T]]$. By (\cite{CL}, Theorem 14.4.1), it is known that $f_t^*(Q_U(T)) \in \@C(\-{U_t}, \@L_{\@O})[[T]]_{\rm{rat}}$. This proves the result. 
\end{proof}

\end{document}